\theoremstyle{thmstyleone}%
\newtheorem{theorem}{Theorem}%  meant for continuous numbers
\newtheorem{proposition}[theorem]{Proposition}%
\newtheorem{corollary}[theorem]{Corollary}
\theoremstyle{thmstyletwo}%
\newtheorem{remark}{Remark}%
\theoremstyle{thmstylethree}%
\newcommand\cexp{\mathop{\mbox{$\bcc$-$\mathrm{exp}$}}}
\newcommand{\sfT}{\mathsf{T}}
\newcommand{\GammaE}{\Gamma^{\cE}}
\newcommand{\ttX}{\mathtt{X}}
\newcommand{\ttY}{\mathtt{Y}}
\newcommand{\tth}{\mathtt{h}}
\newcommand{\ttg}{\mathtt{g}}
\newcommand{\Two}{\mathbb{II}}
\newcommand{\R}{\mathbb{R}}
\newcommand{\N}{\mathbb{N}}
\newcommand{\E}{\mathbb{E}}
\newcommand{\fD}{\mathfrak{D}}
\newcommand{\rD}{\mathrm{D}}
\newcommand{\rC}{\mathrm{C}}
\newcommand{\rH}{\mathrm{H}}
\newcommand{\rS}{\mathrm{S}}
\newcommand{\grad}{\mathrm{grad}}
\newcommand{\hess}{\mathrm{hess}}
\newcommand{\ttq}{\mathtt{q}}
\newcommand{\barx}{\bar{x}}
\newcommand{\Z}{\mathbb{Z}}
\newcommand{\cD}{\mathcal{D}}
\newcommand{\bcD}{\bar{\mathcal{D}}}
\newcommand{\rcD}{\mathring{\mathcal{D}}}
\newcommand{\cE}{\mathcal{E}}
\newcommand{\ft}{\mathfrak{t}}
\newcommand{\tphi}{\varphi}
\newcommand{\cM}{\mathcal{M}}
\newcommand{\bcM}{\bar{\mathcal{M}}}
\newcommand{\cN}{\mathcal{N}}
\newcommand{\hcN}{\hat{\mathcal{N}}}
\newcommand{\bcN}{\bar{\mathcal{N}}}
\newcommand{\bcc}{\mathsf{c}}
\newcommand{\barbcc}{\bar{\mathsf{c}}}
\newcommand{\bu}{\mathsf{u}}
\newcommand{\bs}{\mathsf{s}}
\newcommand{\bdomg}{\mathring{\omega}}
\newcommand{\bomg}{\bar{\omega}}
\newcommand{\bOmg}{\bar{\Omega}}
\newcommand{\opt}{\mathsf{opt}}
\newcommand{\bdeta}{\mathring{\eta}}
\newcommand{\baeta}{\bar{\eta}}
\newcommand{\bdxi}{\mathring{\xi}}
\newcommand{\bxi}{\bar{\xi}}
\newcommand{\brho}{\bar{\rho}}
\newcommand{\tq}{\mathrm{q}}
\newcommand{\cross}{\mathsf{cross}}
\newcommand{\brD}{\bar{\mathrm{D}}}
\newcommand{\bdD}{\mathring{\mathrm{D}}}
\newcommand{\bT}{\mathbf{T}}
\newcommand{\tT}{\mathtt{T}}
\newcommand{\tone}{\mathbf{1}}
\newcommand{\sfg}{\mathsf{g}}
\newcommand{\ptt}{\mathtt{p}}
\DeclareMathOperator{\sign}{sign}
\DeclareMathOperator{\Rc}{R}
\DeclareMathOperator{\diag}{diag}
\DeclareMathOperator{\arcsinh}{arcsinh}
\begin{document}

%\title[Article Title]{Article Title}
\title[Zero and nonnegative MTW tensor c-divergence]{Families of costs with zero and nonnegative MTW tensor in optimal transport and the c-divergences}

%%=============================================================%%
%% GivenName	-> \fnm{Joergen W.}
%% Particle	-> \spfx{van der} -> surname prefix
%% FamilyName	-> \sur{Ploeg}
%% Suffix	-> \sfx{IV}
%% \author*[1,2]{\fnm{Joergen W.} \spfx{van der} \sur{Ploeg} 
%%  \sfx{IV}}\email{iauthor@gmail.com}
%%=============================================================%%

\author{Du Nguyen}
\address{Independent, Darient, CT, USA}
\email{nguyendu@post.harvard.edu}
%\affil[2]{\orgdiv{Department}, \orgname{Organization}, \orgaddress{\street{Street}, \city{City}, \postcode{10587}, \state{State}, \country{Country}}}

%%==================================%%
%% Sample for unstructured abstract %%
%%==================================%%

\begin{abstract}We study the information geometry of $\bcc$-divergences from families of costs of the form $\mathsf{c}(x, \barx) =\mathsf{u}(x^{\mathfrak{t}}\barx)$ through the optimal transport point of view. Here, $\mathsf{u}$ is a scalar function with inverse $\mathsf{s}$, $x^{\ft}\barx$ is a nondegenerate bilinear pairing of vectors $x, \barx$ belonging to an open subset of $\mathbb{R}^n$. We compute explicitly the MTW tensor (or cross curvature) for the optimal transport problem on $\mathbb{R}^n$ with this cost. The condition that the MTW-tensor vanishes on null vectors under the Kim-McCann metric is a fourth-order nonlinear ODE, which could be reduced to a linear ODE of the form $\mathsf{s}^{(2)} - S\mathsf{s}^{(1)} + P\mathsf{s} = 0$ with constant coefficients $P$ and $S$. The resulting inverse functions include {\it Lambert} and {\it generalized inverse hyperbolic\slash trigonometric} functions. The square Euclidean metric and $\log$-type costs are equivalent to instances of these solutions. The optimal map may be written explicitly in terms of the potential function. For cost functions of a similar form on a hyperboloid model of the hyperbolic space and unit sphere, we also express this tensor in terms of algebraic expressions in derivatives of $\mathsf{s}$ using the Gauss-Codazzi equation, obtaining new families of strictly regular costs for these manifolds, including new families of {\it power function costs}. We express the divergence geometry of the $\mathsf{c}$-divergence in terms of the Kim-McCann metric, including a $\mathsf{c}$-Crouzeix identity and a formula for the primal connection. We analyze the $\sinh$-type hyperbolic cost, providing examples of $\mathsf{c}$-convex functions, which are used to construct a new \emph{local form} of the $\alpha$-divergences on probability simplices. We apply the optimal maps to the problem of sampling fat-tailed distributions, in particular, to sample the multivariate $t$-distribution.
\end{abstract}

\keywords{Information geometry, divergence, Optimal transport, MTW tensor, Monge-Amp\`{e}re equation, Kim-McCann metric, semi-Riemannian geometry}

\subjclass{58C05,  49Q22, 53C80,  57Z20,  57Z25,  68T05, 26B25}

\maketitle

\section{Introduction}
In recent years, it has emerged that there is a close connection between information geometry \cite{Amari} and the geometry of optimal transport \cite{KimMcCann,KhanZhang,AyOtto}. An important theme in information geometry is the geometry of divergences, where important statistical concepts are shown to have geometric interpretations. Two important concepts in optimal transport are $\bcc$-convexity and the optimal map; both are important in applications. Both are used to define the $\bcc$-divergences \cite{WongYang,WongInfo,WongZhangIEEE}, which generalize the classical Bregman divergence and the $\alpha$-divergence. Naturally, this suggests a study of $\bcc$-divergences for other tractable costs could potentially lead to new insights into this important concept and would lead to new applications.

With that in mind, in this paper, we obtain simple \emph{regularity criteria} for a general family of costs, regularity means smoothness of the optimal map, which appears in the $\bcc$-divergence. Focusing on the $\bcc$-divergence, we obtain general results for the dualistic geometry, in particular for a subfamily of \emph{hyperbolic costs}. Our analysis leads to a new divergence inequality. We propose a \emph{mirror sampling algorithm} and suggest potential applications in \emph{hyperbolic representation} in machine learning\cite{Alvarez,Idrobo}. Preliminarily, this confirms the earlier suggestion that investigating alternative costs would lead to further insights and applications in both optimal transport and information geometry.

  Let $\rho$ and $\brho$ be Borel probability measures on manifolds $\cM$ and $\bcM$, and let $\bcc: \cM\times \bcM\to \R\cup \{\infty\}$ be a cost function, the optimal transport problem finds the Borel map $\bT$ minimizing the total cost $\int_{\cM}\bcc(x, \tT(x))d\rho(x)$ among all Borel maps $\tT$ pushing $\rho$ to $\brho$.

  For the square Euclidean distance cost, a general existence result was proved in \cite{Brenier}, then extended to more general costs, including on manifolds, see \cite{KimMcCann} for a review. In general, the optimal map $\bT$ is not even continuous. Through the pioneering work of \cite{MTW} and subsequent results (\cite{Loeper,KimMcCann}, and others), the regularity of $\bT$ is tied to the Ma-Trudinger-Wang tensor (cost sectional curvature or cross-curvature), which is best expressed as a type of sectional curvature of a semi-Riemannian metric on a subdomain $\cN\subset \cM\times \bcM$, called the Kim-McCann metric \cite{KimMcCann}, (reviewed in \cref{sec:review}). Another geometric approach to optimal transport is studied in \cite{KhanZhangPAA}, where the authors study K{\"a}hler geometry of optimal transport problems. In addition to allowing the extension of the problem to manifolds, the expression of the MTW tensor in terms of a semi-Riemannian metric allows us to use computational tools from differential geometry. A cost is called weakly regular, or satisfying A3w, if the MTW tensor is nonnegative on null vectors (zero length in the Kim-McCann metric), and strictly regular, or satisfying A3s, if it is positive on nonzero null vectors. Under additional conditions, in \cite{LoeperActa}, the author shows A3w is necessary and sufficient for the regularity of $\bT$, while A3s allows stronger estimates. See \cite{LiuHolder,GuillenMcCann,FigalliKimMcCann} for H{\"o}lder continuity results.

  Costs satisfying the MTW conditions A3w and A3s are important because of the regularity of the optimal map. Here, we produce several new examples, including those with zero MTW tensors, a result we believe to be significant both theoretically and practically. The work \cite{WongYang} shows that we can construct a divergence from convex potentials of a transportation problem. Among our zero MTW-tensor costs, the hyperbolic family discussed in \cref{sec:optimalexample} possesses a collection of $\bcc$-convex potentials that could be useful in various problems. The associated divergence provides a new notion of distance on the $p$-ball $|x|_p < R$ for $R> 0$, and also allows us to construct divergences on probability simplices, converging to the well-known Kullback-Leibler divergence in statistics and machine learning. A similar consideration could apply to other manifolds.
  
In general, the MTW tensor is quite lengthy to compute, but there are several examples of simple functions with zero MTW tensor on $\R^n$ \cite{LeeMcCann,LeeLi}, the square distance cost, and the $\log$-type cost. Motivated by these examples, we consider cost functions of the form $\bcc(x, \barx) = \bu(x^{\ft}\barx)$ on a subset of $\R^n\times\R^n$. Here, we fix a nondegenerate symmetric matrix $A_{\ft}$ and define $x^{\ft}\barx:=x^{\sfT}A_{\ft}\barx$, $\sfT$ is the transpose operator, $\bu$ is a scalar, monotonic, $C^4$-function from a real interval to $\R$ with inverse $\bs$. We write $\bs_i$ for the $i$-th derivative $\bs^{(i)}$ of $\bs$. With the Kim-McCann metric $\langle , \rangle_{KM}$
defined in \cref{eq:KimMcCann} on a submanifold $\cN\subset \cM\times\cM\subset\R^n\times\R^n$, where $\bs_1\circ \bu(x^{\ft}\barx)$ and $(\bs_1^2-\bs\bs_2)\circ \bu (x^{\ft}\barx)$ do not vanish, the first main result of the paper is a formula for its MTW tensor
\begin{theorem}\label{theo:crossMain}For $\tq=(x, \barx)\in \cN$ and for a vector $\bdomg=(\omega, \bomg)\in \R^n\times\R^n$, set $u = \bu(x^{\ft}\barx)$ and $s_i = \bs_i(u)$. Then the cross-curvature (MTW-tensor) of the cost $\bcc: (x, \barx) \mapsto \bu(x^{\ft}\barx)$ is given by
    \begin{equation}\begin{gathered}
      \cross(\bdomg) = \frac{(s_1^2 - s_0s_2)s_4 + s_0s_3^2 - 2s_1s_2s_3 + s_2^3}{2(s_1^2-s_0s_2)s_1^5}(\omega^{\ft}\barx\bomg^{\ft}x)^2 \\
      - \frac{2(s_1s_3 - s_2^2)}{s_1^4}(\omega^{\ft}\barx\bomg^{\ft}x)\langle \bdomg, \bdomg\rangle_{KM} \
      + \frac{s_2}{s_1}\langle \bdomg, \bdomg\rangle_{KM}^2.\label{eq:crossRn}
      \end{gathered}
    \end{equation}
Thus, if $\bdomg$ is a null vector ($\langle \bdomg, \bdomg\rangle_{KM}=0$), then the cross curvature is nonnegative if and only if $\frac{(s_1^2 - s_0s_2)s_4 + s_0s_3^2 - 2s_1s_2s_3 + s_2^3}{2(s_1^2-s_0s_2)s_1^5}\geq 0$. In particular, if $\bs=\bs_0$ satisfies the ODE
    \begin{equation}
      (\bs_1^2 - \bs_0\bs_2)\bs_4 + \bs_0\bs_3^2 - 2\bs_1\bs_2\bs_3 + \bs_2^3 =0\label{eq:zeroODE}
    \end{equation}
then the cross-curvature is identically zero on null-vectors.
\end{theorem}

%for a null vector $\bdomg = (\omega, \bomg)\in\R^n\times\R^n$ is expressed as a product of $(\omega^{\ft}\barx)^2(\bomg^{\ft}x)^2$ and the following function
%  \begin{equation}\frac{((\bs^{(1)})^2 - \bs\bs^{(2)})\bs^{(4)} + \bs(\bs^{(3)})^2 - 2\bs^{(1)}\bs^{(2)}\bs^{(3)} + (\bs^{(2)})^3}{2((\bs^{(1)})^2-\bs\bs^{(2)})(\bs^{(1)})^5}.
%    \label{eq:MTWintro}
%  \end{equation}
The scalar function in \cref{eq:zeroODE} determines the regularity condition of the cost, (compared with the three functions for costs of the form $\Phi(|x-\barx|)$ \cite{LeeLi} for a scalar function $\Phi$). Here, $\bs$ is evaluated at all $u$ in the range of $\bu(x^{\ft}\barx)$ for $(x, \barx)$ in $\cN$, assuming the denominator is nonzero (required for the metric to be well-defined). The vanishing of the MTW tensor is thus a fourth-order nonlinear ODE. It turns out this ODE could be reduced to a second-order linear ODE with closed-form solutions
  \begin{theorem}\label{theo:sol}For the fourth-order ODE in \cref{eq:zeroODE} with initial conditions $\bs_i(u_0) = s_i$ for $0\leq i \leq 3$ for $u_0\in \R$  with $s_1\neq 0, s_1^2-s_0s_2\neq 0$, define
    \begin{equation}S: = \frac{\bs_1\bs_2 - \bs\bs_3}{\bs_1^2-\bs\bs_2}, \quad P := \frac{\bs_2^2-\bs_3\bs_1}{\bs_1^2-\bs\bs_2}.\label{eq:SPDelta}
    \end{equation}
If $\bs$ is a solution of \cref{eq:zeroODE}, then $S$ and $P$ are constants, and $\bs$ also satisfies
\begin{equation}\bs_2 - S\bs_1 + P\bs = 0.\label{eq:secondord}
\end{equation}
Thus, \cref{eq:zeroODE} has a unique solution of one of the following forms in an interval containing $u_0$ depending on $\Delta =\Delta(u_0) := S^2 - 4P$
 \begin{gather}\bs(u) = p_0e^{p_1u} + p_2e^{p_3u} \text{ with }p_0p_2(p_1-p_3)\neq 0, p_3 > p_1\text{ if }\Delta > 0\label{eq:Sol1},\\
         \bs(u) = (a_0+a_1u)e^{a_2u} \text{ with }a_1\neq 0 \text{ if }\Delta = 0\label{eq:Sol3},\\
      \bs(u) = b_0e^{b_1u}\sin(b_2u + b_3) \text{ with }b_0>0,b_2 > 0 \text{ if }\Delta < 0 \label{eq:Sol2}.
    \end{gather}
For \cref{eq:Sol1}, $p_1$ and $p_3$ are roots of $z^2 -S(u_0)z + P(u_0)$, for \cref{eq:Sol2}, $b_1 \pm \sqrt{-1}b_2$ are roots of the same equation. If $\Delta=0$ then $a_2$ in \cref{eq:Sol3} is $\frac{S}{2}$. The remaining coefficients could be determined once these roots are found.
  \end{theorem}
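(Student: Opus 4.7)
The plan is to introduce the auxiliary Wronskian-like quantities
\[W := (\bs^{(1)})^2 - \bs\bs^{(2)}, \qquad V := \bs^{(1)}\bs^{(2)} - \bs\bs^{(3)}, \qquad U := (\bs^{(2)})^2 - \bs^{(1)}\bs^{(3)},\]
so that by \cref{eq:SPDelta} we have $S = V/W$ and $P = U/W$, well defined near $u_0$ since $W(u_0) \neq 0$ by hypothesis. I will first observe that the relation $\bs^{(2)} - S\bs^{(1)} + P\bs = 0$ in \cref{eq:secondord} is purely algebraic: direct expansion gives $\bs^{(1)} V - \bs\, U = (\bs^{(1)})^2\bs^{(2)} - \bs(\bs^{(2)})^2 = \bs^{(2)} W$, and dividing by $W$ yields the claim without using the ODE \cref{eq:zeroODE} at all.

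Next I will show $S$ and $P$ are constant along any solution of \cref{eq:zeroODE}. Differentiating the definitions directly gives the compact formulas $W' = V$, $V' = (\bs^{(2)})^2 - \bs\bs^{(4)}$, and $U' = \bs^{(2)}\bs^{(3)} - \bs^{(1)}\bs^{(4)}$, so that $S'W^2 = V'W - V^2$ and $P'W^2 = U'W - UV$. The heart of the argument, and its main technical obstacle, will be the pair of polynomial identities
\[V'W - V^2 = -\bs\cdot\mathcal{E}, \qquad U'W - UV = -\bs^{(1)}\cdot\mathcal{E},\]
where $\mathcal{E}$ denotes the left-hand side of \cref{eq:zeroODE}. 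These are bookkeeping-heavy but entirely mechanical expansions: both sides are polynomials of degree at most six in $\bs,\bs^{(1)},\ldots,\bs^{(4)}$ and one simply checks that the coefficients of each monomial match. Granting them, $\mathcal{E}=0$ immediately forces $S'=P'=0$, making $S$ and $P$ genuine constants along the solution.

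Once $S,P$ are constants, \cref{eq:secondord} is a classical constant-coefficient linear second-order ODE with characteristic polynomial $z^2 - Sz + P$ and discriminant $\Delta = S^2 - 4P$. The trichotomy $\Delta>0$, $=0$, $<0$ then yields the three families \cref{eq:Sol1}, \cref{eq:Sol3}, \cref{eq:Sol2} respectively, with $p_1,p_3$ (resp.\ $a_2 = S/2$, resp.\ $b_1 \pm \sqrt{-1}\,b_2$) read off as characteristic roots and the remaining coefficients fixed by the initial data $s_0, s_1, s_2, s_3$ at $u_0$. The stated side conditions encode $W\neq 0$: substituting each ansatz into $W$ produces respectively $-p_0 p_2 (p_1 - p_3)^2 e^{(p_1+p_3)u}$, $a_1^2 e^{2a_2 u}$, and $b_0^2 b_2^2 e^{2b_1 u}$, giving $p_0 p_2 (p_1 - p_3)\neq 0$, $a_1\neq 0$, and $b_0 b_2 \neq 0$; the further normalizations $p_3 > p_1$, $b_0 > 0$, $b_2 > 0$ only remove the redundancy of swapping the two characteristic roots or absorbing signs into the phase $b_3$.
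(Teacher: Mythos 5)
Your proposal is correct and follows essentially the same route as the paper: the identities $S'W^2=-\bs\,\mathcal{E}$ and $P'W^2=-\bs^{(1)}\mathcal{E}$ (with $W=(\bs^{(1)})^2-\bs\bs^{(2)}$) are exactly the derivative formulas the paper asserts by direct calculation, your algebraic identity $\bs^{(1)}V-\bs U=\bs^{(2)}W$ is the paper's factorization yielding \cref{eq:secondord}, and the trichotomy plus the computed values of $W$ for each ansatz matches the paper's closing remarks. The Wronskian-style bookkeeping ($W'=V$, etc.) is a slightly cleaner organization of the same computation, and I confirmed the two claimed polynomial identities do hold.
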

  In these three families of solutions, the classical example \cite{Brenier} $\bcc(x, \barx) = - x^{\ft}\barx$, corresponds to $\bs(u) = - u$, the well-known $\log$\slash reflector antenna cost $\bcc(x, \barx) = -\log(1\pm x^{\ft}\barx)$ corresponds to $\bs(u) = \mp(1 - e^{- u})$. The corresponding families of $\bu$ contain Lambert and inverse hyperbolic functions, and other transcendental functions. Even when the coefficient of $(\omega^{\ft}\barx\bomg^{\ft}x)^2$ in (\ref{eq:crossRn}) is positive, $\omega^{\ft}\barx$ or $\bomg^{\ft}x$ could still be zero, so this cost is A3w but not A3s on $\R^n$ for $n\geq 2$.

As we will introduce many symbols and notations in this paper, we summarize them in \cref{tbl:notations} for the reader's convenience.
\begin{table}
  \resizebox{\textwidth}{!}{
    \begin{tabular}{ll}
\toprule      
Symbol & Meaning \\
\midrule
$\bs, \bu$, $\bs_i, \bu_i, s_i, u_i$ & A scalar monotonic function, its inverse, its derivatives
$\bs^{(i)}$ and $\bu^{(i)}$, and their values\\
$\bcc$ & Transport cost, with $\bcc(x, \barx) = \bu(s) = \bu(x^{\ft}\barx)$ for $s=x^{\ft}\barx$\\
$\ft$& $x^{\ft}y = x^{\sfT}A_{\ft}y$ for $x, y\in \R^n$ for a symmetric matrix $A_{\ft}\in\R^{n\times n}$\\
$x, \barx, \ttq$ & Typical points in source $x\in\cM$, target $\barx\in\bcM$ and total manifold $\ttq=(x, \barx)\in\cN\subset\cM\times \bcM$\\
$\omega, \bomg, \bdomg$ & Source and target components of a tangent vector $\bdomg=(\omega, \bomg)\in T_{\ttq}\cN$\\
$\rD, \brD, \bdD$ & Directional derivatives in source ($x$), target ($\barx$) and total manifold variable $(\ttq)$\\
$\cD\bcc, \bcD\bcc, \rcD\bcc$ & Partial and total gradients of $\bcc$ in source ($x$), target ($\barx$) and total manifold variable $(\ttq)$\\
$\nabla, \Gamma$ & A connection and the corresponding Christoffel functions\\
$\Pi, \Pi'$ & The metric projection to the tangent bundle\slash tangent space, and its diffential (Jacobian)\\
$\Two$ & The second fundamental form as an operator\\
$\bT, \bT_{\phi}, x_{\bT}$ & The optimal map corresponding to a potential $\phi$, $x_{\bT} = \bT (x)$\\
$\nabla^{1}, \nabla^{-1}, \Gamma^{1}, \Gamma^{-1}$ & The primal and dual connections and Christoffel functions of a divergence $\fD$
\\
\bottomrule
\end{tabular}
  }
\caption{Summary of notations.}
\label{tbl:notations}  
\end{table}

%In \cref{theo:sol}, we show for a solution of this equation, there are constants $S,P\in\R$ such that $\bs^{(2)} -S\bs^{(1)} + P\bs=0$, thus, it has {\it three families of solutions} of the forms $\bs(u) = p_0e^{p_1u} + p_2e^{p_3u}$, $\bs(u) = (a_0+a_1u)e^{a_2u}$ and $\bs(u) = b_0e^{b_1u}\sin(b_2u + b_3)$, with some restrictions on the coefficients to avoid degeneracy. 

Beyond the Euclidean space, we consider the case when $\cM$ is a sphere $\rS^n$ or a hyperboloid model of hyperbolic geometry $\rH^n$, considered as subsets of $\R^{n+1}$. In these cases, the induced pairing by $\ft$ is Riemannian, and the MTW condition reduces (\cref{theo:R1234}) to the nonnegativity of {\it three scalar expressions} in $\bs$. In these cases, the Riemannian distance on $\cM$ is a function of $x^{\ft}\barx$, thus, $\bcc$ could be expressed in terms of the Riemannian distance, the point of view considered in \cite{LeeLi}, using Jacobi fields \cite{LeeMcCann}. Their criteria should be equivalent to ours when restricted to these manifolds. It is easy to construct costs that satisfy A3w(s) for a range of $x^{\ft}\barx$, it is harder to do so for a comprehensive range. With the help of symbolic differentiation tools, we have the following two theorems
\begin{theorem}\label{theo:Hn1}For $\rH^n$, if the parameters satisfy the specified conditions below then the cost function satisfies A3s for all $(x, \barx)\in \rH^n\times\rH^n$:\hfill\break
\underline {1. Generalized hyperbolic, sinh-like:} $\bs(u) = p_0e^{p_1u} + p_2e^{p_3 u}$ with $p_0 < 0, p_1 < 0, p_2 > 0, -p_1 \geq p_3 \geq 0$.\hfill\break
\underline {2. Generalized hyperbolic, one-side range:} $\bs(u) = p_0e^{p_1u} + p_2e^{p_3 u}$ with
    $ p_0 < 0, p_1 < 0, p_2 > 0, p_1 < p_3 < 0$, with range $u \leq u_c = \frac{1}{p_1-p_3}\log\frac{-p_2p_3}{p_0p_1}$. \hfill\break
\underline {3. Lambert:} $\bs(u) = (a_0+a_1u)e^{a_2u}, a_1 > 0, a_2 <0$ with range $u\leq u_c = -\frac{a_0a_2+a_1}{a_1a_2}$ and $\bcc(x, \barx) = \frac{1}{a_2}W(\frac{a_2\exp{\frac{a_0a_2}{a_1}}}{a_1}x^{\ft}\barx) - \frac{a_0}{a_1}$. Here, $W$ is $W_0$ or $W_1$, a Lambert functions.\hfill\break
\underline {4. Affine:} $\bs(u) = a_0+a_1u, a_1 > 0$, $\bcc(x, \barx) = \frac{1}{a_1}(x^{\ft}\barx - a_0)$.\hfill\break

Additionally, we have the log cases. For $p_1\neq 0$,  $\bcc(x, \barx) = -\frac{1}{|p_1|}\log(\frac{1-x^{\ft}\barx}{|p_0|})$ satisfies A3s and $\bcc(x, \barx) =  \frac{1}{p_1}\log(\frac{-x^{\ft}\barx}{|p_0|})$ satisfies A3w.
\end{theorem}
For cases 1 and 2, $\bu$ could be inverted explicitly in following subcases (note that $x^{\ft}\barx\leq -1$ on the hyperboloid), but in general, we need a numerical solver \hfill\break
\underline {1a. $p_3=-p_1 >0$ in case 1:} $p_0 <0, p_2,p_3 >0$, $\bcc(x, \barx)= \frac{1}{|p_3|}\log\frac{x^{\ft}\barx + \sqrt{|4p_0p_2| + (x^{\ft}\barx)^2}}{2|p_2|}$.\hfill\break
%$$\bcc(x, \barx)=\frac{1}{p_3}\log\frac{s \pm \sqrt(-4p_0p_2 + s^2)}{2*p_2}$$
\underline {2a. $p_1=2p_3$ in case 2:} $ p_0 <0, p_2 >0, p_3<0$, $\bcc(x, \barx)=-\frac{1}{|p_3|} \log\frac{|p_2| + \sqrt{-4|p_0|x^{\ft}\barx + p_2^2}}{2|p_0|}$.
We can normalize the parameters by noting that optimality of $|a|\bcc(x, \barx)+b$ is the same as that of $\bcc(x, \barx)$ for $a>0$ and $b\in \R$, thus, we can remove the additive constants. 
\begin{theorem}\label{prop:power}For $\rS^n$, the cost function $\bcc(x, \barx) = \bu(s)$ for  $s= x^{\sfT}\barx$, $\bu: [-1, 1]\to[-2^{\frac{1}{\alpha}}, 0],  \bu(s)  = - (1 + s)^{\frac{1}{\alpha}}$, $\bs(u) = (- u)^{\alpha}-1$ ($u\in [-2^{\frac{1}{\alpha}}, 0]$), for a real number $\alpha \geq 2$, 
  satisfies A3s in $S_* := \rS^n\times \rS^n\setminus \{(x, -x)\;|x\in\rS^n\}$. %and the MTW tensor of the form in \cref{eq:sphereMTW} has positive coefficients.
 
  For $\rH^n$, with $\beta\in [1,2]$, the cost $\bcc(x, \barx) = - (-x^{\ft}\barx)^{\beta}$ corresponding to $\bu: (-\infty, -1]\to (-\infty, -1]$ with $\bu(s)  = - (- s)^{\beta}, \bs(u) = - (-u)^{\frac{1}{\beta}}$ is strictly regular.
\end{theorem}

Compare with \cite[Theorem 1.1]{LeeLi}, the $\log$ and affine costs are already known on the hyperboloid. For the sphere, the well-known reflector antenna case \cite{Loeper,WangXJ} could be considered a limit of the (scaled) cost in \cref{prop:power} when $\alpha\to\infty$. This is one new family of costs for the sphere, where the condition of \cite[Theorem 4.1]{LoeperActa} is satisfied, assuring the optimal path avoids the cut locus, implying regularity of the optimal map.

\emph{Previous examples.} In \cite[Section 8]{TrudingerWang09}, the authors have several regularity results for costs $\bcc(x, \barx)=-(1\pm(x-\barx)^2)^{\frac{1}{2}}$, the power cost, and $\bcc(x, \barx) = x^{\sfT}\barx+f(x)g(\barx)$ for two convex functions $f,g$.

In \cite{LeeMcCann}, the authors study costs arising from mechanical systems, and give the MTW tensor in terms of Jacobi fields, the example $\bcc(x, \barx)=x^{\sfT}A_{\ft}\barx$ was found in that paper, which inspired us to look at $\bu(x^{\ft}\barx)$. In \cite{LeeLi},  the authors give regularity criteria for functions of the Riemannian distance on space forms, which, when specialized to the hyperboloid and sphere, should be the same as ours\footnote{We understand (private communication) this was done before the Kim-McCann metric was defined.}. They found examples of $\log$ and linear types. In our approach, we use the fact that the contribution from the second fundamental form is nonnegative in some cases, and we use symbolic tools to experiment with the formulas in terms of $x^{\ft}\barx$. Deformation results were studied in \cite{DelanoeGe}, and also in the cited works. Submersions of products of positively curved manifolds \cite{KimMcCann2012,FiKimMc} give another collection of examples. In \cite{KimKitagawa}, the authors showed an example which is A3w but not A3s. In \cite{vonNessi} the author found new costs as functions of geodesic distance on the sphere.

\emph{Hyperbolic $\sinh$-type cost.} The case $\bs(u) = p_0e^{-ru}+p_2e^{ru}$ with $r>0, p_0 > 0 > p_2$ is probably the most tractable of the new costs. We show an absolutely-homogeneous convex function $\tphi$ restricts to a $\bcc$-convex function in the domain $|x^{\sfT}\grad_{\tphi}(x)|<\frac{1}{r}$ ($\grad_{\tphi}$ is the gradient of $\tphi$). We compute the duals explicitly under certain conditions.

\emph{Divergence.} The relationship between optimal transport and divergence, an important concept in information geometry and learning \cite{Amari,Banerjee}, is clarified in \cite{WongPal,WongYang}, where a divergence is constructed from the optimal map of a convex potential. \emph{A divergence could be considered a normalized cost}, where constant potentials are convex, with the optimal map being the identity map.

Several classical results on convex functions generalize to $\bcc$-convex functions by studying the \emph{optimal map}. If $\bT=\bT_{\phi}$ is the optimal map for the potential $\phi$, and write $x_{\bT}$ for $\bT(x)$, in \cref{prop:cdivmetric}, we show the $\bcc$-hessian of a $\bcc$-convex function, defined as $\hess^{\bcc}_{\phi}(x): \xi\mapsto  \rD_{\xi}\cD\bcc(x, x_{\bT}) +\hess_{\phi}(x)\xi$ in \cref{eq:hesscgeneral} plays the role of the divergence metric tensor. The same proposition also relates $\hess^{\bcc}_{\phi}(x)$ to the optimal map by
$$d\bT(x)= - (\brD_{}\cD\bcc(x, x_{\bT}))^{-1}\hess^{\bcc}_{\phi}(x).$$
This leads to the $\bcc$-Crouzeix formula \cref{eq:crouzeix} relating the differential of the optimal maps of $\bcc$-conjugate functions $    \left((\brD\cD\bcc)^{-1}\hess^{\bcc}_{\phi}(\rD\bcD\bcc)^{-1}\hess^{\barbcc}_{\phi^{\bar{\bcc}}}\right)_{x, x_{\bT}} = I_{\bar{\Omega}}$, the primal connection $\Gamma^1$ in \cref{eq:GammaD1} and the determinant formula \cref{eq:detdT}. For cost functions of the form $\bu(x^{\sfT}\barx)$ in this paper, the determinant of $(-\brD\cD\bcc(x, \bT(x)))^{-1}$ has a simple form $\frac{(-s_1)^{n+2}}{s_1^2-s_2s_0}$. This leads to a simple formula of the determinant term in the Monge-Amp\`{e}re equation, which suggests the \emph{Mirror Langevin} method discussed shortly could be generalized to this context, and it provides the computational framework for our sampling application. While we study the hyperbolic cost more carefully, the treatment of hyperbolic cost also extends to the log-type cost $\frac{\pm 1}{\alpha}\log (1+\alpha x^{\sfT}\barx)$ in \cite{WongInfo,WongZhangIEEE}, for which there is a simple characterization of $\bcc$-convex functions. For sampling applications, we suggest many classical algorithms could be extended if we replace classically convex functions with an appropriate $\bcc$-convex function satisfying MTW regularity conditions, in particular, those satisfying the nonnegativity condition in \cref{theo:crossMain}.

In addition to the background in \cite{Nielsen}, Legendre-type potentials defined in \cref{sec:dualistic} are (informally) potentials with the associated optimal maps sufficiently smooth and bijective in the next result. We now describe the dualistic geometry of the hyperbolic cost $-\frac{\arcsinh (rx^{\sfT}\barx)}{r}$ as the main example of a new $\bcc$-divergence
\begin{proposition}\label{eq:dualistichyper}Let $\phi^{(3)}$ denote the third-order derivative tensor of a $\bcc$-convex potential $\phi$ of Legendre-type. For $x\in\Omega$, let $\ttg = \grad_{\phi}(x)$. Let $\mathtt{h} = \hess_{\phi}(x)$ considered as a matrix. For $\bcc(x, \barx) = -\frac{\arcsinh (rx^{\sfT}\barx)}{r}$, the divergence metric of the $\bcc$-divergence
$$\phi(x) - \phi(x')- \frac{1}{r}\log
  \frac{rx^{\sfT}\grad_{\phi}(x') + \left(r^2(x^{\sfT}\grad_{\phi}(x'))^2
    - r^2((x')^{\sfT}\grad_{\phi}(x'))^2+1\right)^{\frac{1}{2}}}{
    r(x')^{\sfT}\grad_{\phi}(x')+ 1 }
  $$
 in \cref{eq:divergence} is given by the operator $\hess^{\bcc}_{\phi}(x)\xi = \tth\xi + r^2(x^{\sfT}\ttg \ttg^{\sfT}\xi)\ttg$ in \cref{eq:hesschyper}. The dualistic pair of connections are
  \begin{equation}
\begin{gathered}    
  \Gamma^1(x; \xi_1, \xi_2) = -r^2((\ttg ^{\sfT}\xi_1\ttg ^{\sfT}\xi_2)x \
                      + (x^{\sfT}\ttg \ttg ^{\sfT}\xi_2)\xi_1 \
                      + (x^{\sfT}\ttg \ttg ^{\sfT}\xi_1)\xi_2), \\
\Gamma^{-1}(x; \xi_1, \xi_2) =  (\hess^{\bcc}_{\phi}(x))^{-1}\big( \phi^{(3)}(\xi_1, \xi_2)+
                                  r^2(x^{\sfT}\ttg \xi_1^{\sfT}\ttg )\tth\xi_2 \
                                  + r^2(\xi_2^{\sfT}\ttg x^{\sfT}\ttg )\tth\xi_1 \\
                                  + r^2\left(
                                      (1+3r^2(x^{\sfT}\ttg )^2)\xi_1^{\sfT}\ttg \xi_2^{\sfT}\ttg  \
                                      + 2(\xi_1^{\sfT}\tth\xi_2)x^{\sfT}\ttg
                                      + \xi_2^{\sfT}\ttg (x^{\sfT}\tth\xi_1)
                                      + \xi_1^{\sfT}\ttg(x^{\sfT}\tth\xi_2) 
                                      \right)\ttg \big).\label{eq:dualistic}
\end{gathered}
  \end{equation}
  The cubic Amari-Chentsov tensor $\xi_3^{\sfT}\hess^{\bcc}_{\phi}(\Gamma^{-1}(\xi_1, \xi_2)-\Gamma^1(\xi_1, \xi_2))$ is given by
  \begin{equation}\begin{gathered}
      \rC(\xi_1, \xi_2, \xi_3) = \phi^{(3)}(\xi_1,  \xi_2, \xi_3) \
        + r^2\big(
        2x^{\sfT}\ttg(\xi_1^{\sfT}\ttg \xi_2^{\sfT}\tth  \xi_3                
         +\xi_2^{\sfT}\ttg\xi_1^{\sfT}\tth  \xi_3 
         + \xi_3^{\sfT}\ttg\xi_2^{\sfT}\tth  \xi_1)\\
        + (1+6r^2(x^{\sfT}\ttg)^2)\xi_3^{\sfT}\ttg\xi_1^{\sfT}\ttg\xi_2^{\sfT}\ttg
        + \xi_3^{\sfT}\ttg\xi_2^{\sfT}\ttg x^{\sfT}\tth  \xi_1
                + \xi_1^{\sfT}\ttg\xi_2^{\sfT}\ttg x^{\sfT}\tth \xi_3 \
                + \xi_3^{\sfT}\ttg\xi_1^{\sfT}\ttg x^{\sfT}\tth \xi_2\big)
\end{gathered}                
\end{equation}  
  for three vectors $\xi_1, \xi_2, \xi_3\in\R^n$, and it is a symmetric tensor.
\end{proposition}
The curvature of $\Gamma^1$ is also easy to compute, see a discussion in \cref{sec:dualistic}.

\subsection{Applications and perspectives}
We provide an application to sample high-dimensional multivariate distributions, in particular of the multivariate $t$-distribution. However, we will start by discussing several further potential applications. They are beyond the scope of the present article, but we believe following up works would be fruitful.

First, there are important injectivity results associated with regular costs \cite{FigalliKimMcCann}. If we have a bijective optimal map, measures from one space could be pushed forward or pulled back to another. Thus, statistics and sampling from one space could be \emph{mirrored} in the other. Recently, this is applied in the \emph{Mirror Langevin algorithm} \cite{NEURIPS2018_Mirror,pmlrv125_zhang20a,AhnChewi}. The pioneering paper \cite{NEURIPS2018_Mirror} looks at pushforward SDEs from a constrained space to its optimal transport target, called the \emph{mirror}, which is $\R^n$, and provides a state-of-the-art sampling algorithm of the Dirichlet distribution on a probability simplex. The subsequent papers analyze the Riemannian Langevin process with the driving Brownian motion associated with the Hessian metric of the potential of the transport problem. It is well-known that the Hessian metric is the divergence metric of the Bregman divergence. As pointed out above and in \cref{sec:dualistic}, the divergence metric of a $\bcc$-divergence (called the $\bcc$-hessian metric here) is of a similar format and satisfies a \emph{Crouzeix} identity. Thus, the \emph{Mirror Langevin algorithm} could be extended to other $\bcc$-divergences, if we can construct good examples of $\bcc$-convex potentials. This is what we hope to investigate in subsequent work.

Since the $\bcc$-Mirror Langevin method would likely result in a much longer article, to provide an application of the pullback\slash push forward measures of optimal transport, we study an example of mirror sampling of the multivariate $t$-distribution (MVT). We observe, at least empirically, the MVT pullbacks to a measure close to the uniform distribution in an ellipsoid domain of a quadratic potential of a hyperbolic cost. Thus, reversing the approach of Mirror Langevin sampling, we can run Monte Carlo sampling in the ellipsoid domain for fat-tailed distributions. We show this is successful for MVT, using only the expression of the density. This suggests the approach could work for more general distributions, and could be improved if combined with other sampling techniques and exploiting the structure of these distributions.

\emph{Hyperbolic representation and matching.} In machine learning, one method to study hierarchy data sets is to embed the hierarchy data tree in an Euclidean space to introduce a notion of distance between hierarchy trees. Trees and graphs also grow exponentially with the number of nodes, while Euclidean volume only grows polynomially with radius. Several authors (see references in \cite{Alvarez,Idrobo}) suggest that embedding in the hyperbolic space $\rH^n$ would be a natural approach, as the hyperbolic volume also increases exponentially with radius. This approach, called hyperbolic representation, has attracted attention in recent years. Matching or aligning two hierarchies could be done by optimal transport. A naive transport-based approach using the hyperbolic distance could lead to a non-continuous optimal map, as the MTW tensor is proportional to the sectional curvature \cite{LoeperActa} and is thus negative on a hyperbolic space. In \cite{Alvarez,Idrobo}, the authors use the previously discovered regular costs for the hyperbolic space in \cite{LeeLi} to study the matching problem. Thus, it is natural to extend these results to our new regular costs on the hyperbolic space $\rH^n$ in \cref{theo:Hn1,prop:power}. This is a project that we are keen to attempt, but it is beyond the scope of this paper.

We expect the new divergence to be useful not only as a measure of distance for latent states, as mentioned in \cref{sec:latent}, but also as a regularization measure for learning problems \cite{NIPS2013_Cuturi}.

In the next section, we review basic facts about semi-Riemannian geometry, optimal transport, and the Kim-McCann metric. We then prove our main results, first for $\R^n$ then for the sphere and hyperboloid model. We show the form of the optimal map, the first and second-order criteria for optimality, applied to our cost functions. We give families of $\bcc$-convex functions for hyperbolic costs. We then study $\bcc$-divergences, and their dualistic geometry, with examples from the hyperbolic costs. The divergence metric of the dualistic geometry provides a framework to extend the Mirror Langevin method and also allows us to attempt mirror sampling for the multivariate $t$-distribution, our main application. We also discuss numerical aspects of a local $\alpha$-divergence and end the main article with a discussion. In the appendices, we provide a proof of \cref{prop:power}, we list the monotonic ranges for the costs in \cref{theo:sol}, and include another example of a convex potential for hyperbolic costs. In \cite{NguyenMTWGitHub}, we implement the cost functions, potential and optimal map programmatically, and verify some statements and proofs in the paper symbolically and numerically.

\emph{Potential limitation and challenges} The construction of $\bcc$-convex potentials on $\R^n$ and their $\bcc$-convex dual for is presumably the main challenge in applications. Our construction of $\bcc$-convex potential is applicable to absolutely-homogeneous functions. From \cite{WongInfo}, there may be other applicable constructions. Similar to the hyperbolic cost, the cost $\frac{1}{p_3}\log\frac{-p_2 \pm  \sqrt{4p_0x^{\sfT}\barx + p_2^2}}{2p_0}$ (corresponding to $\bs(u) = p_0e^{2p_3u} + p_2e^{p_3u}$) could also give additional tractable convex potentials and optimal maps.

\section{Review: MTW tensor, Kim-McCann metric, and semi-Riemannian geometry}\label{sec:review}
The basic setting of the Kim-McCann metric is an open domain $\cN\subset\cM\times \bcM$ of two manifolds $\cM\times\bcM$ with a $C^4$-cost function $\bcc$. We typically denote the first variable of $\bcc$ by $x$ and second by $\barx$. Denote by $\rD$ the directional derivative (of a scalar or vector-valued function) in $x$, $\brD$ the directional derivative in $\barx$. We denote by $\bdD$ the directional derivative with respect to $\tq = (x, \barx)$. For $x\in\cM$, denote by $\bcN(x)$ the collections of $\barx$ such that $(x, \barx)\in \cN$, and defined $\cN(\barx)$ similarly.

At a point $\tq = (x, \barx)\in\cN$, a tangent vector to $\cN$ will have the form $\bdomg = (\omega, \bomg)$ with $\omega\in T_x\cM, \bomg\in T_{\barx}\bcM$. We use $\mathring{}$ to denote the pair and $\bar{}$ to denote the second coordinate. Assuming the cost $\bcc$ satisfies the conditions
\begin{itemize}%[leftmargin=.05in]
\item[] (A1). The mapping $\barx\mapsto -\rD\bcc(x, \barx)$ from $\bcN(x)\subset\bcM$ to $T_x^*\cM$ is injective. Here, $\rD\bcc(x, \barx)$ is the linear functional on $T_x\cM$ sending $\xi\in T_x\cM$ to $\rD_{\xi}\bcc(x, \barx)$.
\item[] (A2). For $\tq=(x, \barx)\in \cN$, the bilinear pairing $(\xi, \bxi)\mapsto \brD_{\bxi}\rD_{\xi}\bcc$ for $\xi\in T_x\cM, \bxi\in T_{\barx}\bcM$ is nondegenerate.
\end{itemize}  
%\hfill\break

The Kim-McCann pairing for a cost function $\bcc(\tq) = \bcc(x, \barx)$ is defined for two tangent vectors $\bdomg_1, \bdomg_2$ as
\begin{equation}\langle\bdomg_1, \bdomg_2 \rangle_{KM} = -\frac{1}{2}\left((\rD_{\omega_1}\brD_{\bomg_2}\bcc)(\tq) + (\brD_{\bomg_1}\rD_{\omega_2}\bcc)(\tq)\right)\label{eq:KimMcCann}
\end{equation}
or $\langle \bdomg, \bdomg\rangle_{KM} = - \brD_{\bomg}\rD_{\omega}c$ for $\bdomg\in T\cN$. By (A2), this defines a semi-Riemannian metric on $\cN$. By the fundamental theorem of Riemannian geometry \cite{ONeil1983}, there exists a metric compatible, torsion-free connection $\nabla$ on $\cN$, the Levi-Civita connection.

The connection and curvature are typically given in Einstein summation notation in differential geometry. We will use the notation of operator\slash vector\slash matrix calculus, hoping it is more intuitive for readers using optimal transport in applications. In this setting, consider an open subset of $\R^n$ (a local chart of a manifold is identified with a subset of $\R^n$). The Christoffel symbols $\Gamma^k_{ij}$ ($i, j, k$ runs over a basis) are given in operator form, a {\it Christoffel function $\Gamma: (x, \xi, \eta)\mapsto \Gamma(x; \xi, \eta)$}, a $\R^n$-valued function from $(\R^n)^3$, bilinear in tangent vector variables $\xi, \eta$. Thus, for two vector fields $\ttX, \ttY$ on the manifold $\cN$, considered as vector-valued functions locally, the covariant derivative $\nabla_{\ttX}\ttY$ is given in the form $\bdD_{\ttX}\ttY + \Gamma(\ttX, \ttY)$, where $\bdD_{\ttX}\ttY$ is the coordinate-dependent directional derivative on $\cN$. The connection is torsion free means $\Gamma(\ttX, \ttY)= \Gamma(\ttY, \ttX)$ and metric compatibility means
\begin{equation}\bdD_{\ttX}\langle \ttY, \ttY\rangle_{\sfg} = 2\langle \ttY, \nabla_{\ttX}\ttY\rangle_{\sfg} = 2\langle \ttY, \bdD_{\ttX}\ttY + \Gamma(\ttX, \ttY)\rangle_{\sfg}.\label{eq:metricComp}
\end{equation}
Here, $\sfg$ is a semi-Riemannian pairing, mainly the  Kim-McCann metric in this paper.

When $\cN$ is a (non open) submanifold of lower dimension of a vector space $\cE=\R^n$, we also use the same notation. Here, $\ttX$ and $\ttY$ are identified with $\cE$-valued functions from $\cN$. At $\tq\in\cN$, $\Gamma$, originally defined for two vectors in $T_{\tq}\cN$, is extended linearly to a $\cE\times\cE$-bilinear map to $\cE$, and the covariant derivative $\nabla_{\ttX}\ttY = \bdD_{\ttX}\ttY + \Gamma(\ttX, \ttY)$ evaluates to a vector field of $\cN$ for two vector fields $\ttX, \ttY$.

In operator notation, the curvature in terms of Christoffel functions is given as
\begin{equation}\Rc_{\bdomg_1, \bdomg_2}\bdomg_3 = (\bdD_{\bdomg_1}\Gamma)(\bdomg_2, \bdomg_3)
-(\bdD_{\bdomg_2}\Gamma)(\bdomg_1, \bdomg_3) + \Gamma(\bdomg_1, \Gamma(\bdomg_2, \bdomg_3))- \Gamma(\bdomg_2, \Gamma(\bdomg_1, \bdomg_3)).\label{eq:curveGeneral}
\end{equation}
The derivation is with respect to the manifold variable $\tq$. We mention (but do not use in this paper) that it holds \cite{NguyenGeoglobal} even for non open submanifolds of $\R^n$. We note that this sign convention is opposite of \cite{KimMcCann,ONeil1983}, this is consistent with the cross-curvature formulation below.

To describe the Levi-Civita connection of the Kim-McCann metric and its curvature in this notation, in the case of a subset of $\R^n$, or a local chart, the standard basis of $\R^n$ allows us to define a gradient. It also allows us to define the transpose of any matrix. Gradient and transpose can be defined using any basis of $\R^n$, or relative to any nondegenerate bilinear pairing of $\R^n$, thus, we will fix such a bilinear pairing on $\R^n$, and will use the gradient (called {\it local gradient}) as the main mechanism for index raising. We will soon use this to construct the gradient $\cD$ and $\bcD$ of the cost function. The local pairing\slash gradient is a notational replacement of index notation, and may not be related to $\bcc$. Just as intrinsic expressions are  independent of coordinates, they are independent of local gradients. Let $\cD$ be the gradient with respect to $x$ and $\bcD$ be the gradient with respect to $\barx$ (corresponding to the directional derivatives $\rD$ and $\brD$, respectively). Then $\rD\bcD \bcc: \omega\mapsto \rD_{\omega}\bcD \bcc$ and $\brD\cD \bcc: \bomg\mapsto \brD_{\bomg}\cD \bcc$ are adjoint maps with respect to the local pairing, and the condition (A2) requires they are invertible. The Levi-Civita connection of the Kim-McCann metric is given by \cite[equation 4.1]{KimMcCann}, which is written in operator form as follows
\begin{equation}\Gamma(\bdomg_1, \bdomg_2) =
  ((\rD\bcD \bcc)^{-1}\rD_{\omega_1}\rD_{\omega_2}\bcD \bcc,
  (\brD\cD \bcc)^{-1}\brD_{\bomg_1}\brD_{\bomg_2}\cD \bcc).\label{eq:GammaKM}
\end{equation}
To see this, the first component is written in local coordinates in \cite[equation 4.1]{KimMcCann}
as $\Gamma_{ij}^m = \sum_{\bar{k}}\bcc^{m\bar{k}}\bcc_{\bar{k}ij}$, where in a coordinate system $\{e_i|i=1,\cdots, n\}$ of $\R^n$, write $\partial_i :=\rD_{e_i} =\frac{\partial}{\partial x_i}$ and
$\bar{\partial}_{\bar{j}} :=\brD_{e_j}$, then $\bcc^{mk}$ is the $mk$-entry of the inverse matrix of $\rD\bcD \bcc=(\partial_i\bar{\partial}_{\bar{j}}\bcc)_{i,j=1}^n$, while $\bcc_{\bar{k}ij}$ is defined as $\partial_i\partial_j\bar{\partial}_{\bar{k}}\bcc$, thus, $\Gamma_{ij}^m$ is just 
$(\rD\bcD \bcc)^{-1}\rD_{\omega_1}\rD_{\omega_2}\bcD \bcc$ written in coordinate form. We can check $\Gamma_{\bar{i}\bar{j}}^{\bar{m}} = \sum_{k}\bcc^{\bar{m}k}\bcc_{k\bar{i}\bar{j}}$ represents the second component similarly.

For a tangent vector $\omega\in T\cM$, we write $\omega_{\ast 0}$ for $(\omega, 0)\in T\cN$ and for $\bomg\in T\bcM$, we write $\bomg_{0\ast}$ for $(0, \bomg)$. The cross curvature $\cross(\bdomg)=\langle \Rc_{\omega_{\ast 0},\bomg_{0 \ast}}\bomg_{0 \ast},\omega_{\ast 0} \rangle_{KM}$ is the numerator of the sectional curvature (i.e. without the normalizing denominator). Note that in the the curvature convention of \cite{KimMcCann,ONeil1983}, the cross-curvature would be $
-\langle \Rc_{\omega_{\ast 0},\bomg_{0 \ast}}\bomg_{0 \ast},\omega_{\ast 0} \rangle_{KM}$. This numerator is given by the tensor
\begin{equation}\cross(\bdomg)=\frac{1}{2}\left( \brD_{(\brD\cD \bcc)^{-1}\brD_{\bomg}\brD_{\bomg}\cD \bcc} \rD_{\omega}\rD_{\omega} \bcc
-\rD_{\omega}\brD_{\bomg}\brD_{\bomg}\rD_{\omega} \bcc\right).\label{eq:crosssec}
\end{equation}
%note that this matches equation 1.2 of the MTW tensor in \cite{KimMcCann}, but is of
%opposite sign of $\Rc_{i\bar{j}\bar{k}l}$ in \cite[Lemma 4.1]{KimMcCann}, matching their sectional curvature convention.
In fact, set $\omega = \omega^i \partial_i, \bomg = \bomg^{\bar{j}}\bar{\partial}_{\bar{j}}$, expand $(\brD\cD \bcc)^{-1}\brD_{\bomg}\brD_{\bomg}\cD \bcc=(\bomg^{\bar{j}}\bomg^{\bar{k}}\bcc^{a\bar{f}}\bcc_{a\bar{j}\bar{k}})e_{\bar{f}}$ using the summation convention then \cref{eq:crosssec} expands to
$$\frac{1}{2}\big(\omega^i\omega^l\bomg^{\bar{j}}\bomg^{\bar{k}}\bcc^{a\bar{f}}\bcc_{a\bar{j}\bar{k}}\bcc_{il\bar{f}}\bcc^{a\bar{f}}
-\omega_i\omega_l\bomg_{\bar{j}}\bomg_{\bar{k}} \bcc_{i\bar{j}\bar{k}l}  \big)=\omega^i\omega^l\bomg^{\bar{j}}\bomg_{\bar{k}}\Rc_{i\bar{j}\bar{k}l}
$$
where $\Rc_{i\bar{j}\bar{k}l} = \frac{1}{2}(\sum_{a\bar{f}} \bcc_{il\bar{f}}\bcc^{a\bar{f}}\bcc_{a\bar{j}\bar{k}} -\bcc_{i\bar{j}\bar{k}l})$ is of the opposite sign of \cite[Lemma 4.1]{KimMcCann}, matching its sign convention. This also agrees with the MTW tensor \cite[equation 1.2]{KimMcCann}. The extra terms $\bcc^{\bar{k}e}\bcc^{\bar{l}f}$ there are due to a formulation using the cotangent bundle $T^*\bcM$ instead of $T\bcM$, but the regularity results are unchanged.

We recall the following conditions
\begin{itemize}
\item[] (A3w) $\cross(\bdomg)\geq 0$ for a null vector $\bdomg = (\omega, \bomg)$ (ie $\langle \bdomg, \bdomg\rangle_{KM}=0$),%\hfill\break
\item[] (A3s) A3w and equality implies $\omega=0$ or $\bomg=0$.%\hfill\break
\end{itemize}  
The results of \cite{MTW,Loeper,KimMcCann}, in particular \cite[Theorem 3.1 and 3.4]{Loeper}, show the regularity of the optimal map is governed by these conditions on the cross curvature. For specific regularity results based on these conditions, please consult the cited papers.
\subsection{The Gauss-Codazzi equation}\label{sec:GaussCodazzi}
The main reference is \cite[Chapter 4]{ONeil1983}. Let $\cN$ be a manifold embedded in a vector space $\cE$, where $\cE$ has a semi-Riemannian pairing $\sfg$, with the Levi-Civita connection $\nabla^{\cE}$  has the Christoffel function $\GammaE$. We will assume the semi-Riemannian pairing restricted to $T\cN$ is not degenerate. In that case, there is a metric-compatible projection $\Pi$ from $\cE$ to $T\cN$ (that means if $\tq\in \cN$ and $\omega\in \cE$ then $\omega-\Pi(\tq)\omega$ is normal to $T_{\tq}\cN$ in the metric $\sfg$). We can consider $\Pi$ a map from $\cN$ to the vector space of operators on $\cE$, satisfying the idempotent condition $\Pi(\tq)^2=\Pi(\tq)$, so the directional derivative
\begin{equation}\Pi'(\tq, \xi):=(\rD_{\xi}\Pi)(\tq)=\lim_{t\to 0}\frac{1}{t}\left(\Pi(\tq+t\xi) - \Pi(\tq) \right)\end{equation}
makes sense for a tangent vector $\xi$, where we extend $\Pi$ to a smooth operator-valued function (but not required to be idempotent outside of $\cN$) on $\cE$ near $\cN$. The directional derivative is independent of the extension and computable by operator calculus.

If $\eta$ is another tangent vector then $\Pi'(\tq;\xi)\eta$ is normal, as $\Pi(\rD_{\xi}\Pi)\eta = (\rD_{\xi}\Pi^2)\eta - (\rD_{\xi}\Pi)\Pi\eta =0$ since $\Pi$ is idempotent and $\eta$ is tangent. The second fundamental form is the tensor
\begin{equation}\label{eq:secform}
  \Two(\tq; \xi, \eta) = \Pi'(\tq, \xi)\eta + (I_{\cE} - \Pi)(\tq)\GammaE(\xi, \eta).
\end{equation}
This follows from the usual definition in \cite[Lemma 4.4]{ONeil1983}, $\Two(\ttX, \ttY) = (I-\Pi)\nabla_{\ttX}^{\cE}\ttY$, where $\ttX, \ttY$ are two vector fields extending $\xi$ and $\eta$. In fact, using $\Pi$, we can extend the tangent vectors $\xi, \eta$ to vector fields $\ttX_{\xi}:\tq_1 \mapsto \Pi(\tq_1)\xi, \ttX_{\eta}:\tq_1\mapsto \Pi(\tq_1)\eta$, $\tq_1\in\cN$
$$\Two(\tq; \xi, \eta) =(I_{\cE} - \Pi)(\tq)\nabla_{\ttX_{\xi}}\ttX_{\eta}= (I_{\cE} - \Pi)(\tq)\rD_{\xi}\ttX_{\eta}
+ (I_{\cE} - \Pi)(\tq)\GammaE(\xi, \eta)$$
then evaluate $\rD_{\xi}\ttX_{\eta} = (\rD_{\xi}\Pi)\eta$, which is normal, $\Pi(\tq)\Pi'(\tq, \xi)\eta=0$. The Gauss-Codazzi equation \cite[Theorem 4.5]{ONeil1983} computes the curvature of $\cN$ from the curvature of $\cE$. A close examination shows the proof of that theorem still works in a more general setting, where the pairing on $\cE$ may be degenerate, but its restriction to $\cN$ is nondegenerate (thus $\cN$ is semi-Riemannian while $\cE$ is not), provided we have a (non-unique) pairing-compatible projection and connection on $\cE$.  We will discuss this in detail when considering the cost $\pm\log(-x^{\ft}\barx)$ on the hyperboloid model.

The Gauss-Codazzi equation implies, says for $\cN\subset \cM\times \bcM\subset \cE^2$  
\begin{equation}\cross_{\cM}(\bdxi) = \cross_{\cE}(\bdxi) + \langle\Two(\xi_{\ast 0},\xi_{\ast 0}), \Two(\bxi_{0\ast},\bxi_{0\ast})\rangle_{KM}-\langle\Two(\xi_{\ast 0},\bxi_{0 \ast}), \Two(\xi_{\ast 0},\bxi_{0\ast})\rangle_{KM}.\label{eq:GaussCod0}
\end{equation}
As we will see, the term $\langle\Two(\xi_{\ast 0},\bxi_{0 \ast}), \Two(\xi_{\ast 0},\bxi_{0\ast})\rangle_{KM}$ vanishes in our examples.

\section{The MTW tensor: the vector space case}We will assume $n\geq 2$ is an integer. As before, consider a fixed symmetric, nonsingular matrix $A_{\ft}$ and define $x^{\ft} = x^{\sfT}A_{\ft}$, which is a row vector for a column vector $x$. Define the $\ft$-dot product $x^{\ft} \barx = x^{\sfT}A_{\ft} \barx$ for $x, \barx\in \R^n$. Let $\bu$ be a $C^4$-scalar function on an interval $I_{\bu}$. Consider the cost function
\begin{equation}  \bcc(x, \barx) = \bu(x^{\ft}\barx)\label{eq:bcc}\end{equation}
defined on the subset of $\R^{2n}=\R^n\times\R^n$ of pairs $(x, \barx)$ such that $x^{\ft}\barx\in I_{\bu}$. To keep the formulas compact, we will write $\bu_i$ for the $i$-th derivative $\bu^{(i)}$. If $\bu$ has an inverse function $\bs$ of class $C^4$ (which will be the case below), then we write $\bs_i$ for the $i$-th derivative $\bs^{(i)}$, for $i\in \N$, up to $i=4$. We write $s_i$ for the value $\bs_i(u)$ at a given $u$, with $s = s_0$, and $u_i$ for the value of $\bu_i$ at a point $s = x^{\ft}\barx$.

For a transportation problem on $\R^n$, we look at an open subset $\cN\subset \R^n\times \R^n$. A point in $\cN$ is often denoted $\tq = (x, \barx)$. For the cost function \cref{eq:bcc} and $\bdomg_1=(\omega_1, \bomg_1)\in \R^n\times\R^n, \bdomg_2=(\omega_2, \bomg_2)\in\R^n\times\R^n$, the pairing in \cref{eq:KimMcCann} becomes
  \begin{equation}
   \langle\bdomg_1, \bdomg_2 \rangle_{KM} = -\frac{1}{2}\left((\omega_1^{\ft} \barx\bomg_2^{\ft} x + \bomg_1^{\ft} x\omega_2^{\ft} \barx)\bu_2(x^{\ft}\barx)
                     + (\omega_1^{\ft}\bomg_2
                     + \bomg_1^{\ft}\omega_2)\bu_1(x^{\ft}\barx)\right).\label{eq:KMC}
  \end{equation}

Going forward, besides assuming $\bu$ is of class $C^4$, we will assume $\bu_1$ and $\bu_1+\bu_2s$ do not vanish as in the following proposition, so that condition A1 is satisfied in the paper. In particular, this implies the inverse $\bs$ of $\bu$ exists and is of class $C^4$.
\begin{proposition}\label{prop:metricKM} Let $s = s_0 = x^{\ft}\barx$. The pairing in \cref{eq:KMC} is nondegenerate if and only if $u_1\neq 0$, $u_1+u_2 s\neq 0$, where $u_i = \bu^{(i)}(x^{\ft}\barx)=\bu_i(x^{\ft}\barx)$. If $\bu_1$ is nonzero on its domain $I_{\bu}$, $\bu$ is monotonic and has an inverse function $\bs$ also of class $C^4$. In terms of $\bs$, the Kim-McCann pairing is given by
\begin{equation}
      \begin{gathered}
      \langle\bdomg_1, \bdomg_2 \rangle_{KM} = -\frac{1}{2}\left(-\frac{s_2}{s_1^3}(\omega_1^{\ft}\barx\bomg_2^{\ft}x + \bomg_1^{\ft} x\omega_2^{\ft}\barx)
                     + \frac{1}{s_1}(\omega_1^{\ft}\bomg_2
                     + \bomg_1^{\ft}\omega_2)\right)                   \label{eq:KMCs}
    \end{gathered}\end{equation}
where $s_i = \bs^{(i)}(u)$, $u = u_0=\bu(x^{\ft}\barx)$. It is nondegenerate if and only if $s_1\neq 0$ and $s_1^2 - s_0s_2\neq 0$. This condition also guarantees condition (A1).

Assuming these nondegenerate conditions, then the Christoffel function of the Kim-McCann metric is given by $\Gamma(\bdomg_1, \bdomg_2) = (\Gamma_x, \Gamma_{\barx})$ where
    \begin{align}
        \Gamma_x & = \frac{u_1u_3-2u_2^2}{(u_1+u_2 s)u_1}(\barx^{\ft}\omega_1\omega_2^{\ft}\barx)x \
            + \frac{u_2}{u_1}(\omega_2^{\ft}\barx)\omega_1 
            + \frac{u_2}{u_1}(\omega_1^{\ft}\barx)\omega_2\\
            & =\frac{s_1s_3- s_2^2}{s_1^2(s_0s_2-s_1^2)}(\barx^{\ft}\omega_1\omega_2^{\ft}\barx)x \
            - \frac{s_2}{s_1^2}(\omega_2^{\ft}\barx)\omega_1 
            - \frac{s_2}{s_1^2}(\omega_1^{\ft}\barx)\omega_2,\label{eq:sGammax}\\
       \Gamma_{\barx} & = \frac{u_1u_3-2u_2^2}{(u_1+u_2 s)u_1} (x^{\ft}\bomg_1\bomg_2^{\ft}x)\barx \
            + \frac{u_2}{u_1} (\bomg_2^{\ft}x)\bomg_1 
            + \frac{u_2}{u_1} (\bomg_1^{\ft}x)\bomg_2\\
            & = \frac{s_1s_3-s_2^2}{s_1^2(s_0s_2-s_1^2)} (x^{\ft}\bomg_1\bomg_2^{\ft}x)\barx \
            - \frac{s_2}{s_1^2} (\bomg_2^{\ft}x)\bomg_1 
            - \frac{s_2}{s_1^2} (\bomg_1^{\ft}x)\bomg_2.
    \end{align}
    %    $$J = -(- h2*TxS*TyS + h1**2*TxyS)/s_1^3  $$
\end{proposition}
\begin{proof}Equation (\ref{eq:KMC}) for $\langle,\rangle_{KM}$ follows from the definition $\langle \bdomg, \bdomg\rangle_{KM} = - \brD_{\bomg}\rD_{\omega}\bcc$, the chain rule, and the identity $\langle \bdomg_1, \bdomg_2\rangle_{KM} = \frac{1}{2}(Q(\bdomg_1+\bdomg_2) - Q(\bdomg_1) - Q(\bdomg_2))$ where we defined $Q:\bdomg\mapsto \langle \bdomg, \bdomg\rangle_{KM}$ for $\bdomg\in\R^n\times\R^n$. Equation (\ref{eq:KMCs}) follows from the following relations between $u_1, u_2$ and $s_1, s_2, s_3$, using the inverse function theorem
    \begin{equation}\begin{aligned}
        u_1 & = \frac{1}{s_1},& u_2 & = -\frac{s_2}{s_1^3}.
      \end{aligned}\label{eq:invfunc}
      \end{equation}      
Written in block matrix form, (recall we define $x^{\ft} = x^{\sfT}A_{\ft}$), the pairing has the form
 \begin{equation} \sfg(x, \barx) =  \frac{-1}{2}\begin{bmatrix} 0 & \brD\cD \bcc\\
   \rD\bcD\bcc & 0
      \end{bmatrix}
=    \frac{-1}{2}\begin{bmatrix} 0 & u_1I_n + u_2\barx x^{\ft}\\
   u_1I_n + u_2x\barx^{\ft} & 0
      \end{bmatrix}
 \end{equation}
 which is nondegenerate if and only if $B = u_1I_n + u_2\barx x^{\ft}$ is invertible. Since $n \geq 2$, it is not invertible if $u_1=0$ since the kernel of $x^{\ft}$ is non-trivial. Thus, $B$ is invertible implies $u_1\neq 0$. Then, the equation $u_1\bomg + u_2\barx x^{\ft}\bomg = 0$ has a nonzero solution only if $x^{\ft}\bomg\neq 0$. Assuming this, $B\bomg=0$ implies
 $$0 = x^{\ft} (u_1\bomg + u_2\barx x^{\ft}\bomg)= u_1x^{\ft}\bomg + u_2x^{\ft}\barx x^{\ft}\bomg = x^{\ft}\bomg(u_1 + u_2x^{\ft}\barx)
 $$
 thus, $(u_1 + u_2x^{\ft}\barx)=0$. Hence, $B$ is invertible implies $u_1\neq 0$ and $(u_1 + u_2x^{\ft}\barx)\neq 0$. The converse follows from the same relation, and we can solve explicitly
\begin{equation} \sfg(x, \barx)^{-1} = -2\begin{bmatrix} 0 & \frac{1}{u_1}I_n - \frac{u_2}{(u_1 + u_2 x^{\ft} \barx)u_1}x\barx^{\ft}\\
   \frac{1}{u_1}I_n -\frac{u_2}{(u_1 + u_2 x^{\ft} \barx)u_1} \barx x^{\ft} & 0
      \end{bmatrix}.
\end{equation}
From $u_1 + u_2s = \frac{s_1^2-s_2s}{s_1^3}$, we get the nondegenerate condition in terms of $s$. We have the gradient $\cD \bcc(x, \barx)$ of $\bcc$ in $x$ is $\bu_1(x^{\ft}\barx)\barx$. To check condition (A1), if $\bu_1(x^{\ft}\barx_1)\barx_1 = \bu_1(x^{\ft}\barx_2)\barx_2$, then $\barx_1 = t\barx_2$ for a scalar $t$, thus, $\bu_1(ts)t = \bu_1(s)$ for $s = x^{\ft}\barx_2$. Deriving  the function $t\mapsto \bu_1(ts)t$ in $t$, we get $\bu_2(ts)st + \bu_1(ts)$. This quantity is not zero by assumption, so that function in $t$ is monotonic, thus $t=1$, giving us (A1). 

 In \cref{eq:GammaKM},  $\bcD \bcc(x, \barx) = \bu_1(x^{\ft}\barx)x$ and
$$\begin{aligned}\rD_{\omega_2}\bcD \bcc(x, \barx) & = \bu_2(x^{\ft}\barx)(\omega_2^{\ft}\barx) x + \bu_1(x^{\ft}\barx)\omega_2,
  \\
  \rD_{\omega_1}\rD_{\omega_2}\bcD\bcc(x, \barx)& = (u_3\omega_1^{\ft}\barx\omega_2^{\ft}\barx) x + (u_2\omega_2^{\ft}\barx) \omega_1
  + (u_2\omega_1^{\ft}\barx)\omega_2.
\end{aligned}  $$
From here, we compute $\Gamma_x$ below
$$\begin{gathered} 
  (\frac{1}{u_1}I_n -\frac{u_2}{(u_1 + u_2 x^{\ft} \barx)u_1} x\barx^{\ft})\rD_{\omega_1}\rD_{\omega_2}\bcD\bcc(x, \barx)
  = \frac{1}{u_1}((u_3\omega_1^{\ft}\barx\omega_2^{\ft}\barx) x + (u_2\omega_2^{\ft}\barx)\omega_1
  + (u_2\omega_1^{\ft}\barx)\omega_2)\\
  -\frac{u_2}{(u_1 + u_2 s_0)u_1}\left( (u_3\omega_1^{\ft}\barx\omega_2^{\ft}\barx)x (\barx^{\ft}x) 
  + (u_2\omega_2^{\ft}\barx) x(\barx^{\ft}\omega_1)
  +  (u_2\omega_1^{\ft}\barx)x(\barx^{\ft}\omega_2)\right)\\
  = \frac{1}{u_1}(u_2(\omega_2^{\ft}\barx)\omega_1   + u_2(\omega_1^{\ft}\barx)\omega_2) +
\frac{(\omega_1^{\ft}\barx)(\barx^{\ft}\omega_2)}{u_1(u_1 + u_2 s_0)}\left((u_1 + u_2 s_0)u_3 -u_2( u_3s_0  + u_2   +  u_2)\right) x,  
\end{gathered}  $$
%Thus, $\Gamma_x$ reduces to $\frac{1}{u_1}(u_2(\omega_2^{\ft}\barx)\omega_1   + u_2(\omega_1^{\ft}\barx)\omega_2) + e_x x$ with the coefficient $e_x$ is where
and the coefficient of $x$ is reduced to $\frac{u_1u_3 -2u_2^2}{u_1(u_1 + u_2 s_0)} (\omega_1^{\ft}\barx)(\barx^{\ft}\omega_2).$

We express $\Gamma_x$ in terms of $s_i$'s using $u_1, u_2$ in \cref{eq:invfunc} and $u_3$ below. We also compute $u_4$, which will be used in the cross curvature.
\begin{equation}\begin{aligned}
    u_3 & = - \frac{s_1s_3-3s_2^2}{s_1^5}, &\quad  u_4 & = - \frac{s_1^2s_4 - 10s_1s_2s_3 + 15s_2^3}{s_1^7}.
\end{aligned}    \label{eq:invfunc2}
\end{equation}

We derive $\Gamma_{\barx}$ similarly.
\end{proof}
The cross curvature formula is given in \cref{theo:crossMain}. We can now prove the theorem.
\begin{proof}(of \cref{theo:crossMain}.) We compute
$$\begin{aligned}  \brD_{\bomg}\brD_{\bomg}\cD\bcc(x, \barx)& = u_3(\bomg^{\ft}x)^2 \barx + 2(u_2\bomg^{\ft}x) \bomg,\\
  \rD_{\omega}\brD_{\bomg}\brD_{\bomg}\cD\bcc(x, \barx)& = (u_4\omega^{\ft}\barx(\bomg^{\ft}x)^2) \barx
  + (2u_3\bomg^{\ft}\omega\bomg^{\ft}x) \barx 
  + (2u_3\omega^{\ft}\barx\bomg^{\ft}x) \bomg + (2u_2\bomg^{\ft}\omega) \bomg,\\
  \rD_{\omega}\brD_{\bomg}\brD_{\bomg}\rD_{\omega}\bcc(x, \barx)& = u_4(\omega^{\ft}\barx)^2(\bomg^{\ft}x)^2
  + 4u_3(\bomg^{\ft}\omega)(\bomg^{\ft}x) (\omega^{\ft}\barx)  + 2u_2(\bomg^{\ft}\omega)^2.\\  
  \end{aligned}$$
  From here, $  \bomg_{\diamond}  := (\brD\cD \bcc)^{-1}\brD_{\bomg}\brD_{\bomg}\cD \bcc=
\frac{u_1u_3-2u_2^2}{(u_1+u_2 s)u_1} (x^{\ft}\bomg)^2\barx \
+ 2\frac{u_2}{u_1} (\bomg^{\ft}x)\bomg$ and
$$\begin{aligned}  
            \rD_{\omega}\rD_{\omega}\bcc(x, \barx) & = u_2(\omega^{\ft}\barx)^2,\\
            \brD_{\bomg_{\diamond}}\rD_{\omega}\rD_{\omega}\bcc(x, \barx) & = u_3 (x^{\ft}\bomg_{\diamond})(\omega^{\ft}\barx)^2 + 2u_2(\omega^{\ft}\barx)(\omega^{\ft}\bomg_{\diamond}).
\end{aligned}  $$
Setting $S_{\omega} = \barx^{\ft}\omega, S_{\bomg} =x^{\ft}\bomg, S_{\omega\bomg}=\omega^{\ft}\bomg$, we simplify $2\cross(\bdomg)$ from \cref{eq:crosssec}
$$\begin{gathered}u_3 (x^{\ft}\bomg_{\diamond})(\omega^{\ft}\barx)^2 + 2u_2(\omega^{\ft}\barx)(\omega^{\ft}\bomg_{\diamond})
  - u_4(\omega^{\ft}\barx)^2(\bomg^{\ft}x)^2 - 4u_3(\bomg^{\ft}\omega)(\bomg^{\ft}x)(\omega^{\ft}\barx) - 2u_2(\bomg^{\ft}\omega)^2\\
= u_3(\frac{u_1u_3-2u_2^2}{(u_1+u_2s)u_1}sS_{\bomg}^2 + 2\frac{u_2}{u_1}S_{\bomg}^2)S_{\omega}^2 + 2u_2S_{\omega}(\frac{u_1u_3-2u_2^2}{(u_1+u_2s)u_1}S_{\bomg}^2S_{\omega} + 2\frac{u_2}{u_1}S_{\omega\bomg}S_{\bomg}) \\
- u_4S_{\bomg}^2S_{\omega}^2 - 4u_3S_{\omega}S_{\bomg}S_{\omega\bomg} - 2u_2S_{\omega\bomg}^2  \\
=(\frac{u_1u_3^2s+4u_2u_1u_3-4u_2^3}{(u_1+u_2s)u_1}-u_4)S_{\bomg}^2S_{\omega}^2 + (\frac{4u^2_2}{u_1} - 4u_3)S_{\omega}S_{\bomg}S_{\omega\bomg} - 2u_2S_{\omega\bomg}^2.
\end{gathered}$$
Set $J = \langle\bdomg, \bdomg \rangle_{KM} = - u_2S_{\omega}S_{\bomg} - u_1S_{\omega\bomg}$, then replacing $S_{\omega\bomg}$ by $- \frac{1}{u_1}(u_2S_{\omega}S_{\bomg}+ J)$ in the above, $2\cross(\bdomg)$ reduces to
$$\begin{gathered}(\frac{u_1u_3^2s+4u_2u_1u_3-4u_2^3}{(u_1+u_2s)u_1}-u_4)S_{\bomg}^2S_{\omega}^2 - 4\frac{u_2^2-u_1u_3}{u_1^2}S_{\omega}S_{\bomg}(u_2S_{\omega}S_{\bomg}+J)  - 2\frac{u_2}{u_1^2}(u_2S_{\omega}S_{\bomg}+J)^2.
\end{gathered}$$
Using $u_4$ in \cref{eq:invfunc2}, the coefficient of $J^2$ is $-2\frac{u_2}{u_1^2} = 2\frac{s_2}{s_1}$, the coefficient of $S_{\omega}S_{\bomg}J$ is
$$\frac{- 8u_2^2+ 4u_1u_3}{u_1^2} = s_1^2(\frac{-8s_2^2}{s_1^6}  - 4\frac{s_1s_3-3s_2^2}{s_1^6})=\frac{4s_2^2-4s_1s_3}{s_1^4}.$$
The coefficient of $S_{\omega}^2S_{\bomg}^2$ factored out $((u_1+u_2s)u_1^2)^{-1}=(\frac{s_1^2-ss_2}{s_1^5})^{-1}$ is
$$\begin{gathered}u_1(u_1u_3^2s+4u_2u_1u_3-4u_2^3)-u_4(u_1+u_2s)u_1^2 - 4u_2(u_1+u_2s)(u_2^2-u_1u_3) - 2u_2^3(u_1+u_2s)\\
   = u_1(u_1u_3^2s+4u_2u_1u_3-4u_2^3) - (u_1+u_2s)(u_4u_1^2 + 6u_2^3-4u_1u_2u_3)\\
    = \frac{1}{s_1^{12}}(s(s_3s_1-3s_2^2)^2
    + 4s_1^2s_2(s_3s_1-3s_2^2)
    + 4s_1^2s_2^3) \\
    - \frac{1}{s_1^{12}}(s_1^2-s_2s)(- (s_4s_1^2 - 10s_3s_2s_1 + 15s_2^3)
    - 6s_2^3-4s_2(s_3s_1-3s_2^2))\\
 = \frac{1}{s_1^{12}}\big(ss_1^2s_3^2- 6ss_1s_2^2s_3 + 9ss_2^4 + 4s_1^3s_2s_3-12s_1^2s_2^3 \
            + 4s_2^3s_1^2 \\
            + (s_1^2-s_2s)(s_4s_1^2 - 6s_1s_2s_3 + 9s_2^3 )         \big)\\
= \frac{1}{s_1^{12}}\big(ss_1^2s_3^2- 6ss_1s_2^2s_3 + 9ss_2^4 + 4s_1^3s_2s_3-12s_1^2s_2^3 \
            + 4s_2^3s_1^2 \\
            + (s_1^2-s_2s)s_4s_1^2 -6s_1^3s_2s_3+6ss_1s_2^2s_3 + 9s_1^2s_2^3-9ss_2^4 \big)
\end{gathered}
$$
which is $\frac{s_1^2(s_1^2 - s_0s_2)s_4 + ss_1^2s_3^2 - 2s_1^3s_2s_3 + s_1^2s_2^3}{s_1^{12}} $, the coefficient for $S_{\omega}^2S_{\bomg}^2$ in \cref{eq:crossRn}.
\end{proof}
The solution to the ODE (\ref{eq:zeroODE}) is in \cref{theo:sol}. We will now prove it.
  \begin{proof}(of \cref{theo:sol}) Direct calculation shows 
    $$\begin{gathered}
      \frac{dS}{du} = -\frac{((\bs_1^2-\bs \bs_2)\bs_4 + \bs\bs_3^2  - 2\bs_1\bs_2\bs_3 +\bs_2^3)\bs}{(\bs\bs_2 - \bs_1^2)^2},\\    
      \frac{dP}{du} = -\frac{((\bs_1^2-\bs \bs_2)\bs_4 + \bs\bs_3^2  - 2\bs_1\bs_2\bs_3 +\bs_2^3)\bs_1}{(\bs\bs_2 - \bs_1^2)^2}.
    \end{gathered}$$
    Thus, $S$ and $P$ are constants for a solution of \cref{eq:zeroODE}. We have
    $\bs_3= \frac{\bs_2^2}{\bs_1}-(\frac{\bs_1^2-\bs\bs_2}{\bs_1})P$, while the equation for $S$ implies
    $$\begin{gathered}
     (\bs_1^2-\bs\bs_2)S - \bs_1\bs_2 + \bs(\frac{\bs_2^2}{\bs_1}-(\frac{\bs_1^2-\bs\bs_2}{\bs_1})P)=0\\
\Rightarrow      0 = \bs\bs_2^2 + (\bs^2P-\bs_1\bs S-\bs_1^2)\bs_2  +  \bs_1^3S- \bs_1^2\bs P = (\bs\bs_2 - \bs_1^2)(\bs_2 - S\bs_1 + P\bs).
\end{gathered}$$
Since $\bs\bs_2 -\bs_1^2 \neq 0$, this implies \cref{eq:secondord}. Conversely, if this holds then $\bs_2 - S\bs_1 + P\bs = 0$, and with $S =\frac{\bs_2 +P\bs}{\bs_1}$, we extract the expression for $P$, and $P'=0$ implies \cref{eq:zeroODE}. This gives us the solutions in \cref{eq:Sol1,eq:Sol3,eq:Sol2}. The below is for later convenience.\hfill\break
    For \cref{eq:Sol1}, $s_1^2-ss_2 = -p_0p_2(p_1-p_3)^2e^{(p_1+p_3)u}, S=p_1+p_3, P=p_1p_3$, $\Delta > 0$.\hfill\break
    For \cref{eq:Sol3}, $s_1^2-ss_2 = a_1^2e^{2a_2u}, S=2a_2, P=a_2^2$, $\Delta=0$.\hfill\break
    For \cref{eq:Sol2}, $s_1^2-ss_2 = b_0^2b_2^2e^{2b_1u}, S=2b_1, P=b_1^2+b_2^2$, $\Delta < 0$.
  \end{proof}
\section{The sphere and the hyperboloid model}
We derive a number of common formulas for both manifolds by treating the general case of the semi-Riemannian sphere. However, for a tangent vector $\xi$, since $\xi^{\ft}\xi$ could be negative when the induced model is only semi-Riemannian, the analysis leads to A3w only for the sphere and the hyperboloid model.

For $\epsilon\in \{\pm 1\}$, assume $\epsilon A_{\ft}$ is nondegenerate and diagonal. Let $P_+=P_{+,\epsilon}$ and $P_-=P_{-,\epsilon}$ be the sets of indices corresponding to positive and negative eigenvalues of $\epsilon A_{\ft}$, respectively, and assume the cardinality $|P_+|$ of $P_+$ is not less than $1$. Set
\begin{equation}\rS^n_{\ft, \epsilon} = \{x^{\ft}x = \epsilon\; |\;x\in \R^{n+1}\}.
\end{equation}
The equation for $\rS^n_{\ft, \epsilon}$ could be normalized to $\sum_{i\in P_+} \lambda_i x_i^2 = 1+\sum_{j\in P_-} \lambda_j x_j^2$, with $\lambda_l > 0, l\in P_+\cup P_-$ are absolute values of eigenvalues of $\epsilon A_{\ft}$. This manifold is considered in \cite[Sections 4.22-4.30]{ONeil1983}. The metric on $\rS^n_{\ft, \epsilon}$ is semi-Riemannian. The (induced) Riemannian cases correspond to $|P_-|=0$ and $|P_+|=1$, normalized to

\begin{enumerate}\item The round sphere $\rS^n$, $A_{\ft} = Id, \epsilon = 1$. Here,
  $|x^{\ft}\barx|\leq 1$ and $|x^{\ft}\barx|=1$ implies $\barx= \pm x$.
\item The hyperboloid model of the hyperbolic space $\rH^n$, $A_{\ft} =\diag(-1, 1,\cdots,1)$, and $\epsilon=-1$, on the component $x_0 >0$ for $x = (x_0,x_1,\cdots,x_n)\in \R^{n+1}$. The last restriction is because we have two branches and we focus on one connected branch. The manifold $\rS^n_{\ft, \epsilon}$ is not connected in this case, but we abuse the notation to refer to $\rS^n_{\ft, \epsilon}$ as one connected component, the \emph{right half} $\rH^n$ (the alternative choice $A_{\ft} =\diag(1,\cdots,1, -1)$ with $x_n > 0$ corresponds to the {upper half}). Here, $x^{\ft}\barx\leq -1$ for $x, \barx\in\rH^n$ and $|x^{\ft}\barx|=1$ implies $\barx= x$ (by the additional the condition $x_0>0$).
\end{enumerate}
With this normalization, we note $\epsilon(1-(x^{\ft}\barx)^2) \geq 0$ for both $\rS^n$ and $\rH^n$. We will consider transport problems on $\rS_{\ft, \epsilon}$, with cost $\bu(x^{\ft}\barx)$, and will focus on the two Riemannian cases above.
\begin{proposition}\label{prop:semisphere} Assume the function $\bu$ and its inverse $\bs$ are such that
\begin{equation} (-\bs_0\bs_1^2+\bs_2(\bs_0^2-1))(\bs_1^2-\bs_2\bs_0)\bs_1\neq 0\end{equation}
  for $u\in I_u$. Set $s_i=\bs^{(i)}(u)$. With the cost function $\bcc(x, \barx) = \bu(x^{\ft}\barx)$, the MTW tensor for $\tq=(x, \barx)\in \cN\subset\cM\times\cM$ where $\cM:=\rS_{\ft, \epsilon}^n$ and $\cN$ is an open subset of $\cM\times\cM$ at a tangent vector $\bdxi=(\xi,\bxi)\in T_{\tq}\cN$ is given by the Gauss-Codazzi equation (\ref{eq:GaussCod0}),
  $\cross_{\rS^{n}_{\ft, \epsilon}} = \cross_{\R^{n+1}} + \mathcal{II}$, where $\cross_{\R^{n+1}}$ is given by \cref{eq:crossRn}, and the contribution of the second fundamental form $\mathcal{II}:=\langle \Two(\xi_{\ast 0}, \xi_{\ast 0}), \Two(\bxi_{0\ast }, \bxi_{0\ast })\rangle_{KM}$ is
\begin{gather}
\mathcal{II}=  \frac{(- (s_1^2-ss_2)s_1^2\xi^{\ft}\xi + \epsilon(s_2^2 - s_3s_1)(\barx^{\ft}\xi)^2
  )(- (s_1^2-ss_2)s_1^2\bxi^{\ft}\bxi + \epsilon(s_2^2 - s_3s_1)(x^{\ft}\bxi)^2)             
}{2s_1^5(s_1^2-ss_2)(-ss_1^2+(s^2-1)s_2)}.\label{eq:TwoII}
\end{gather}
There is no contribution from $\langle \Two(\bxi_{0\ast }, \bxi_{\ast 0}),\Two(\bxi_{0\ast }, \bxi_{\ast 0})\rangle_{KM}$ since  $\Two(\xi_{\ast 0}, \xi_{0 \ast})=0$.
\end{proposition}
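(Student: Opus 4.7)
The plan is to invoke the Gauss-Codazzi equation \cref{eq:GaussCod0} with $\cE = \R^{n+1}\times\R^{n+1}$, reducing the task to evaluating $\mathcal{II} = \langle \Two(\xi_{\ast 0}, \xi_{\ast 0}), \Two(\bxi_{0\ast}, \bxi_{0\ast})\rangle_{KM}$, since $\cross_{\R^{n+1}}$ is provided by \cref{theo:crossKM}. The delicate first step is to identify the KM-normal space to $T(\cM\times\cM)$ inside $\cE$. It is \emph{not} spanned by the Euclidean normals $(x,0),(0,y)$: pairing $(x,0)$ with $(0,\bxi)$ for tangent $\bxi$ via \cref{eq:KMC} gives $-\frac{1}{2}(\bxi^{\ft}x)(su_2 + u_1)$, which is generically nonzero. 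Rewriting the KM-orthogonality condition as a membership in $\mathrm{span}(y)$, the normal space is instead spanned by $(\omega_N, 0)$ and $(0, \bomg_N)$, with $\omega_N = u_1^{-1}y - \epsilon u_2(u_1(u_1+u_2 s))^{-1}x$ and $\bomg_N$ the $x\leftrightarrow y$ swap. A short direct computation gives $x^{\ft}\omega_N = y^{\ft}\bomg_N = -s_1 D/(s_1^2-ss_2)$, where $D := -ss_1^2+(s^2-1)s_2$ is exactly the factor in the target denominator.

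To compute $\Two(\xi_{\ast 0}, \xi_{\ast 0})$, I use the tangent extension $\ttX:(x',y')\mapsto (\Pi_{\cM}(x')\xi, 0)$, with $\Pi_{\cM}(x')v = v - \epsilon (v^{\ft}x')x'$ the Riemannian projector onto $T\cM$. Differentiating at $(x,y)$ along $(\xi, 0)$ gives $\rD_{(\xi,0)}\ttX = (-\epsilon(\xi^{\ft}\xi)x, 0)$; combined with $\GammaE((\xi,0),(\xi,0)) = (\Gamma_x, 0)$ from \cref{eq:sGammax} this yields $\nabla^{\cE}_{\ttX}\ttX = (V_x, 0)$ with $V_x = -\epsilon(\xi^{\ft}\xi)x + \Gamma_x$. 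Since the second component vanishes, the KM-normal projection lies entirely along $(\omega_N, 0)$, so $\Two(\xi_{\ast 0}, \xi_{\ast 0}) = \alpha(\omega_N, 0)$ with $\alpha = x^{\ft}V_x/x^{\ft}\omega_N$. Using $x^{\ft}x = \epsilon$ and $x^{\ft}\xi = 0$, the numerator satisfies $x^{\ft}V_x = F_1/(s_1^2(s_1^2-ss_2))$, where $F_1 := -(s_1^2-ss_2)s_1^2 \xi^{\ft}\xi + \epsilon(s_2^2-s_3 s_1)(y^{\ft}\xi)^2$ is the first bracket in the target. Analogously $\Two(\bxi_{0\ast}, \bxi_{0\ast}) = \beta(0,\bomg_N)$ with the matching $F_2$ in its numerator.

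The final step evaluates $\mathcal{II} = \alpha\beta\,\langle(\omega_N,0),(0,\bomg_N)\rangle_{KM}$. Using $\omega_N^{\ft}y = \bomg_N^{\ft}x = \epsilon/(u_1+u_2 s)$ and expanding $\omega_N^{\ft}\bomg_N$, a mechanical manipulation reduces the cross pairing to $\langle(\omega_N,0),(0,\bomg_N)\rangle_{KM} = s_1 D/(2(s_1^2-ss_2))$. Since $\alpha\beta = F_1 F_2/(s_1^6 D^2)$ carries $D^2$ in the denominator, multiplying by this pairing leaves exactly one factor of $D$ below, producing the announced denominator $2s_1^5(s_1^2-ss_2)D$ and hence the claimed formula. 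The main obstacle is recognizing that the KM-normal space differs from the Euclidean one, which requires solving a small linear system in $\mathrm{span}\{x,y\}$; once the normals are in hand, the algebra is a routine unrolling using the inverse-function relations \cref{eq:invfunc} and $\epsilon^2 = 1$.
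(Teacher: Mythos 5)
Your proposal is correct and follows essentially the same route as the paper: Gauss--Codazzi plus an explicit computation of the KM-normal components, with your normal frame $(\omega_N,0),(0,\bomg_N)$ being exactly proportional to the paper's normal directions $((s_1^2-ss_2)y+\epsilon s_2x,0)$ and $(0,(s_1^2-ss_2)x+\epsilon s_2y)$, and the final pairing of normals agreeing with the paper's value $\frac{(s_1^2-ss_2)(-ss_1^2+(s^2-1)s_2)}{2s_1}$ after accounting for the scaling. The only difference is bookkeeping: the paper builds the KM-compatible projection $\Pi$, differentiates it, and applies \cref{eq:secform}, whereas you use a Riemannian-projection tangent extension together with tensoriality of $\Two$ and extract the normal coefficient by applying $x^{\ft}$ (resp.\ $y^{\ft}$); your intermediate quantities ($x^{\ft}\omega_N$, the bracket factors, the cross pairing) all check out against the paper's.
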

\begin{proof}From calculus, the tangent space $T_x\rS^n_{\ft, \epsilon}$ is the null space of the Jacobian of the constraint $x^{\ft}x=\epsilon$. Thus, a vector $\xi$ is tangent to $\rS^n_{\ft, \epsilon}$ if and only if $x^{\ft}\xi=0$.

  For an element $K\in \R^{n+1}\times\R^{n+1}$, we write $K=(K_x, K_{\barx})$. First, we verify for $\bdomg = (\omega, \bomg)\in \R^{n+1}\times\R^{n+1}$, the metric-compatible projection from $\R^{n+1}\times\R^{n+1}$ to the product $T\rS^n_{\ft, \epsilon}\times T\rS^n_{\ft, \epsilon}$ is given by $\Pi(\tq)\bdomg$ with components
  \begin{equation}
  \begin{split}
    (\Pi(\tq)\bdomg)_x &= \omega-  \frac{x^{\ft}\omega}{ss_1^2 - (s^2-1)s_2}((s_1^2-ss_2)\barx + \epsilon s_2x), \\
    (\Pi(\tq)\bdomg)_{\barx} &= \bomg-\frac{\barx^{\ft}\bomg}{ss_1^2- (s^2-1)s_2}((s_1^2-ss_2)x + \epsilon s_2\barx).
  \end{split}
  \end{equation}
  By assumption, $ss_1^2 -s_2(s^2-1)\neq 0$, and the vector $\Pi(\tq)\bdomg)$ given above is tangent to $\cN$, as, for example $x^{\ft}(\Pi(\tq)\bdomg)_x=0$ from
  $$x^{\ft}((s_1^2-ss_2)\barx + \epsilon s_2x)=s(s_1^2-ss_2) + \epsilon^2 s_2=ss_1^2 - (s^2-1)s_2.
  $$
  Assume $\bdxi=(\xi, \bxi)$ is tangent to $\cN$ then
  $$\begin{gathered}\langle \bdxi, \bdomg - \Pi(\tq)\bdomg\rangle_{KM}=-\frac{1}{2s_1^3(ss_1^2 + (s^2-1)s_2)}\big(
    -s_2\barx^{\ft}\xi \barx^{\ft}\bomg x^{\ft}((s_1^2-ss_2)x + \epsilon s_2\barx) \\
    - s_2 x^{\ft}\bxi x^{\ft}\omega \barx^{\ft}((s_1^2-ss_2)\barx +\epsilon s_2x) 
    + s_1^2 \barx^{\ft}\bomg\xi^{\ft}((s_1^2-ss_2)x + \epsilon s_2\barx) \\
    + s_1^2 x^{\ft}\omega\bxi^{\ft}((s_1^2-ss_2)\barx + \epsilon s_2x)\big).
  \end{gathered}$$
Since $x^{\ft}((s_1^2-ss_2)x + \epsilon s_2\barx) = \epsilon s_1^2= \barx^{\ft}((s_1^2-ss_2)\barx +\epsilon s_2x)$ and $\xi^{\ft}x = \bxi^{\ft}\barx=0$, the items in the bracket reduces to
$$\begin{gathered}
  - s_2\barx^{\ft}\xi \barx^{\ft}\bomg \epsilon s_1^2 
    - s_2 x^{\ft}\bxi x^{\ft}\omega \epsilon s_1^2 
    + s_1^2 \barx^{\ft}\bomg\xi^{\ft}(\epsilon s_2\barx) 
    + s_1^2 x^{\ft}\omega\bxi^{\ft}(\epsilon s_2x)=0.  
\end{gathered}$$
confirming $\Pi$ is metric compatible. Differentiating $\Pi(\tq)\bdeta$ in the variable $\tq$ in direction $\bdxi$ for $\bdxi, \bdeta\in T\cN$ and noting $x^{\ft}\eta = 0 = \barx^{\ft}\baeta$ to remove these terms after taking derivative
$$\Pi'(\tq;\bdxi)\bdeta = -(
    \frac{\xi^{\ft}\eta}{ss_1^2 - (s^2-1)s_2}((s_1^2-ss_2)\barx + \epsilon s_2x),
    \frac{\bxi^{\ft}\baeta}{ss_1^2- (s^2-1)s_2}((s_1^2-ss_2)x + \epsilon s_2\barx)
    ).
$$
Recall $\xi_{\ast 0} = (\xi, 0)$ and $\bxi_{0\ast} = (0, \bxi)$ for $\bdxi=(\xi, \bxi)\in T\cN$. From \cref{eq:sGammax}
$$\begin{gathered}
  (I - \Pi(\tq))\Gamma(\xi_{\ast 0}, \xi_{\ast 0}) = (
\frac{s_2^2 - s_3s_1}{(s_1^2-ss_2)s_1^2}\frac{(\barx^{\ft}\xi)^2\epsilon}{
            ss_1^2-(s^2-1)s_2}((s_1^2-ss_2)\barx+\epsilon s_2x)
  , 0)
\end{gathered}
$$
since in  $\Gamma_x$, we only need to project the term with $x$, the tangent $\xi$ has zero vertical component. From \cref{eq:secform}, the second fundamental form $\Two(\xi_{\ast 0}, \xi_{\ast 0})$ is $$\begin{gathered}\Two(\xi_{\ast 0}, \xi_{\ast 0}) = 
  (\frac{\epsilon(s_2^2 - s_3s_1)(\barx^{\ft}\xi)^2 - (s_1^2-ss_2)s_1^2\xi^{\ft}\xi}{
(s_1^2-ss_2)s_1^2(ss_1^2-(s^2-1)s_2)}((s_1^2-ss_2)\barx+\epsilon s_2x)
, 0)
\end{gathered}
$$
and $\Two(\bxi_{0\ast }, \bxi_{0\ast })$ is evaluated similarly by permuting the components. Note,
$$\begin{gathered}
\langle ((s_1^2-ss_2)\barx+\epsilon s_2x, 0), (0, (s_1^2-ss_2)x+\epsilon s_2\barx))\rangle_{KM}\\
= \frac{1}{2}\frac{s_2}{s_1^3}\epsilon^2 s_1^4 - \frac{1}{2s_1} ((s_1^2-ss_2)^2s + 2s_1^2s_2 - s_2^2s)\\
=\frac{1}{2s_1}(-s_2(s_1^2-ss_2) - s(s_1^2-ss_2)^2)=\frac{(s_1^2-ss_2)(- ss_1^2 + (s^2-1)s_2)}{2s_1}.
\end{gathered}$$
Canceling similar terms, $\langle \Two(\xi_{\ast 0}, \xi_{\ast 0}), \Two(\bxi_{0\ast }, \bxi_{0\ast })\rangle_{KM}$ is given by \cref{eq:TwoII}.

Finally, both $\Gamma(\xi_{\ast 0}, \xi_{0 \ast})$ and $\Pi'(\tq, \xi_{\ast 0}) \xi_{0 \ast}$ vanish, thus $\Two(\xi_{\ast 0}, \xi_{0 \ast})=0$.
\end{proof}
For the cost with inverse $\bs(u) = p_0e^{p_1u} + p_2$, we have the next corollary, observe
$$\begin{aligned}s_2^2 - s_3s_1 & = 0,\\
s_1^2 - ss_2 & =-p_0p_1^2p_2e^{p_1u}  = -p_1^2p_2(s-p_2),\\
-ss_1^2 +(s^2-1)s_2 & = p_0p_1^2(p_0p_2e^{p_1u} + p_2^2 - 1)e^{p_1u}
=p_1^2(p_2s - 1)(s-p_2).
\end{aligned}$$ 
\begin{corollary}If $\bs(u) = p_0e^{p_1u} + p_2, \bu(s) = \frac{1}{p_1}\log(\frac{s-p_2}{p_0})$, the MTW tensor is
  \begin{equation}\cross_{\rS^{n}_{\ft,\epsilon}}(\bdxi)=-\frac{p_2}{2p_1(x^{\ft}\barx - p_2)(p_2x^{\ft}\barx - 1)}(\xi^{\ft}\xi)(\bxi^{\ft}\bxi) + N_J\label{eq:antennaRS}\end{equation}
at $x, \barx\in\cM\times\cM$, where $N_J$ vanishes for null vectors.
  
For the hyperboloid model $\rH^n$, $x^{\ft}\barx \leq -1$ for $x, \barx\in \rH^n$, thus, if $p_1 < 0, p_0 < 0, p_2 =1$, the cross-curvature satifies A3s for all $x, \barx\in \rH^n$. When $p_2=0, p_0 < 0$, the Kim-McCann pairing of $\frac{1}{p_1}\log(\frac{1}{p_0}x^{\ft}\barx)$ on $\rH^n$ is nondegenerate and has \underline{zero cross curvature} on null vectors, thus, satisfying A3w.

For the sphere $\rS^n$, the model with $p_1 <0, p_2^2 =1, p_0p_2 < 0$, or $\bcc(x, \barx) = -\frac{1}{|p_1|}\log\frac{1-p_2 x^{\sfT}\barx}{|p_0|}$, a scaled reflector antenna, satisfies A3s for except for when $\barx=p_2 x$.\label{cor:antena}
\end{corollary}
Most of the above are clear, note $s_1=p_1(s-p_2)$. The case $p_2=0$ for $\rH^n$ is interesting. The fact that this cost satisfying A3w is already known from \cite{LeeLi}, we emphasize that the cross curvature is actually zero on null vectors. The Kim-McCann pairing with $\bs(u) = p_0e^{p_1u}$ is degenerate on $\R^{n+1}$. It is nondegenerate on $\rH^n$, and to compute the cross curvature in this case, as mentioned in \cref{sec:GaussCodazzi}, inspecting the proof of \cite[Theorem 4.5]{ONeil1983}, the Gauss-Codazzi equation is still applicable if we have a pairing-compatible projection and connection. For $\bs(u) = p_0e^{p_1u}+p_2, p_2\neq0$, $\Gamma^{\cE}(\bdxi, \bdeta)$ in \cref{theo:crossMain} has $s_2^2 - s_3s_1=0$, thus, it has no terms proportional to $x$ or $\barx$, the only terms that could cause singularity when $p_2=0$. Thus, the compatibility equation for the Levi-Civita connection can be taken through the limit, the limiting connection as $p_2\to 0$ below is still pairing-compatible
$$%\Gamma^{\cE}(\bdxi, \bdeta) = (\frac{1}{s^2}(y^{\ft}\xi y^{\ft}\eta) x - \frac{1}{s}(y^{\ft}\eta)\xi - \frac{1}{s}(y^{\ft}\xi)\eta, \frac{1}{s^2}(x^{\ft}\bxi x^{\ft}\baeta) y - \frac{1}{s}(x^{\ft}\baeta)\bxi - \frac{1}{s}(x^{\ft}\bxi)\baeta).
\Gamma^{\cE}(\bdxi, \bdeta) = ( - \frac{1}{s}(\barx^{\ft}\eta)\xi - \frac{1}{s}(\barx^{\ft}\xi)\eta, - \frac{1}{s}(x^{\ft}\baeta)\bxi - \frac{1}{s}(x^{\ft}\bxi)\baeta).
$$
To apply the Gauss Codazzi equation for this connection, we can compute its cross curvature by taking the limit. In \cref{eq:crossRn}, the coefficient of $F^2$ is zero when $p_2\neq0$, so the cross curvature of the limiting connection on $\R^n$ is zero. The projection could also be taken through limit as $s_1^2-ss_2$ is not in the denominator, while $(I - \Pi(\tq))\Gamma(\xi_{\ast 0}, \xi_{\ast 0})=(0, 0)$ before and after the limit. Thus, the cross-curvature on $\rH^n$ for $p_2=0$ is consistent with \cref{eq:antennaRS}, and vanishes on null vectors. 

\begin{theorem}\label{theo:R1234} For $\rS^n$ and $\rH^n$, consider the cost $\bu(x^{\ft}\barx)$ on the manifold $\cM=\rS^n_{\ft, \epsilon}$ at $(x, \barx)$. If $\lvert x^{\ft}\barx\rvert \neq 1$, set $S_{\xi} = \barx^{\ft}\xi, S_{\bxi} = x^{\ft}\bxi$, then $S_{\xi\perp }^{2} := \xi^{\ft}\xi-\frac{\epsilon (\barx^{\ft}\xi)^2}{1-s^2} \geq 0$, $S_{\bxi\perp}^2:= (\bxi^{\ft}\bxi)-\frac{\epsilon(x^{\ft}\bxi)^2}{1-s^2}\geq 0$. The cross curvature at $\bdxi\in T\cM\times T\cM$ is given by
  \begin{equation}
\begin{aligned}
    \cross_{\cM}(\bdxi) & =
    \frac{1}{2}(\frac{R_1}{(1-s^2)^2} S_{\xi}^2S_{\bxi}^2
    + \frac{\epsilon R_{23}}{1-s^2}(S_{\bxi\perp}^2S_{\xi}^2 + S_{\bxi}^2S_{\xi\perp}^2)
    + R_4S_{\xi\perp }^{2}S_{\bxi\perp }^{2}
    ) + N_J \label{eq:sphereMTW},\\
 N_J & :=             - 2\frac{(s_1s_3 - s_2^2)}{s_1^4} (x^{\ft}\bxi)(\barx^{\ft}\xi) \langle\bdxi, \bdxi \rangle_{KM} \
 + \frac{s_2}{s_1}\langle\bdxi, \bdxi \rangle_{KM}^2.
\end{aligned} 
\end{equation}
The coefficients $R_1, R_{23}, R_4$ are defined below:
$$\begin{aligned}D & := (-ss_1^2-s_2(1-s^2))(s_1^2-s_2s)s_1^5,\\
R_1 & := \frac{1}{D}\big(((s_1^2 - ss_2)s_4 + ss_3^2 - 2s_1s_2s_3 + s_2^3)(- s_2-ss_1^2+s_2s^2)(1-s^2)^2 \\
 &  + ((s_3s_1-s_2^2)(1-s^2) + s_1^2(s_1^2-s_2s))^2 \
            \big),\\
R_{23} & := \frac{1}{D} ((s_3s_1-s_2^2)(1-s^2) + s_1^2(s_1^2-s_2s)) 
               s_1^2(s_1^2-s_2s),\\
               R_4 & := \frac{1}{D}s_1^4(s_1^2-s_2s)^2.
\end{aligned}                 
$$
If $n=1$ the cost function satisfies A3w if $R_1 \geq 0$ and A3s if $R_1 > 0$. If $n\geq 2$, the cost function satisfies A3w if and only if $R_1, R_{23}, R_4$ are nonnegative and A3s if they are all positive.
\end{theorem}
Using \cref{prop:semisphere}, we also see when $\lvert x^{\ft}\barx \rvert = 1$, the cross curvature is
\begin{equation}
\begin{gathered}\cross_{\cM}(\bdxi)_{\lvert x^{\ft}\barx\rvert = 1} = -\frac{1}{2}\frac{
    (s_1^2-ss_2)(\xi^{\ft}\xi) (\bxi^{\ft}\bxi)}{ss_1^3} + \frac{s_2}{s_1}\langle\bdxi, \bdxi \rangle_{KM}^2.\label{eq:crossS1}
\end{gathered}                 
\end{equation}
%Since $|x^{\ft}\barx| = 1$ implies $x=\pm \barx$, $x^{\ft}\bxi=\barx^{\ft}\xi=0$, from \cref{prop:semisphere}, $\cross_{\rS^{n}_{\ft, \epsilon}}(\bdxi)$ is given by \cref{eq:crossS1} if $|x^{\ft}\barx| = 1$.
\begin{proof}
  When $|x^{\ft}\barx|\neq 1$, then $\barx - \epsilon sx \in T_x\rS^{n}_{\ft,\epsilon}$ and $x - \epsilon s\barx\in T_{\barx}\rS^{n}_{\ft,\epsilon}$. The projection of $\xi$ to $\barx-\epsilon sx$ is
  $\mathsf{proj}_{\barx-\epsilon sx}\xi =\frac{\xi^{\ft}(\barx-\epsilon sx)}{(\barx-\epsilon sx)^{\ft}(\barx-\epsilon sx)}(\barx-\epsilon sx)=\frac{\epsilon \barx^{\ft}\xi}{1-s^2}(\barx-\epsilon sx)$. Thus, in the Gram-Schmidt decomposition of $\xi$, the residual in the Riemannian metric is nonnegative
  $$S_{\xi\perp}^2 = |\xi|^2_{\ft} - |\mathsf{proj}_{\barx-\epsilon sx}\xi |_{\ft}^2 =
  \xi^{\ft}\xi - \frac{\epsilon S_{\xi}^2}{1-s^2}=
  |\xi -  \frac{\epsilon \barx^{\ft}\xi}{1-s^2}(\barx-\epsilon sx)|_{\ft}^2\geq 0$$
where we define $|\omega|_{\ft}^2 = \omega^{\ft}\omega, \omega\in T_x\rS_{\ft,\epsilon}$. Similarly $S_{\bxi\perp}^2\geq 0$. Substitute $\xi^{\ft}\xi = S_{\xi\perp}^2 + \frac{\epsilon S_{\xi}^2}{1-s^2}$, $\bxi^{\ft}\bxi = S_{\bxi\perp}^2 + \frac{\epsilon S_{\bxi}^2}{1-s^2}$, expand $\cross_{\cM}$ from \cref{prop:semisphere} using these, we get \cref{eq:sphereMTW}.

If $n=1$ then when $s^2 \neq 1$, we have $S_{\xi\perp}^2 = S_{\bxi\perp}^2=0$, the cross curvature has only the term with $R_1$. In this case, $(x, \barx)$ forms a basis of $\R^2$, so if $R_1 \neq 0$, the cross curvature is zero only if either $\xi$ or $\bxi$ is zero. When $s^2=1$, $R_1=\frac{\bs_1^4(\bs_1^2 - \bs\bs_2)^2}{D}\neq 0$. Note that $R_1$ and $D$ are of the same sign.

If $n\geq 2$, we can always find a vector $\xi\in T_x\cM$ orthogonal to $\barx$, and $\bxi\in T_{\barx}\cM$ orthogonal to $x$, thus, $S_{\xi}= S_{\bxi}=0$, while $S_{\xi\perp}, S_{\bxi\perp}$ are nonzero. Hence, if the cross curvature is nonnegative then $R_4 \geq 0$, and if $R_4=0$ then A3s is violated. Similarly, if $\xi$ is proportional to $\barx -\epsilon sx$ and $\bxi$ is proportional to $x-\epsilon s\barx$, we deduce $R_1 \geq 0$. The equation $R_{23}\geq0$ is deduced by using $\barx -\epsilon sx$ and $\bxi$ orthogonal to $x$, and A3s is violated if either coefficient is zero. If all three coefficients are positive, then the cross curvature is zero only if $S_{\xi}S_{\bxi}= S_{\xi}S_{\bxi\perp} = S_{\xi\perp}S_{\bxi}= S_{\xi\perp}S_{\bxi\perp}=0$, note that $S_{\xi}=0=S_{\xi\perp}$ implies $\xi=0$. But if either $S_{\xi}$ or $S_{\xi\perp}$ is not zero, these equalities force $S_{\bxi}=0=S_{\bxi\perp}$. At $s^2=1$, \cref{eq:crossS1} shows the conditions for A3w, A3s are controlled by $R_4$, which has the same sign with $D$.
\end{proof}
It is relatively easy to construct costs such that the MTW tensor satisfies A3w for some range of $x^{\ft}\barx$. Theorem \ref{theo:Hn1} in the introduction gives us costs that are regular for the full range $x^{\ft}\barx\leq -1$ of $\rH^n$. We will prove it now.

\begin{proof}(of \cref{theo:Hn1}) The terms corresponding to $\cross_{\R^{n+1}}$ vanishes since we use a cost in \cref{theo:sol}. Thus, the numerator of $R_1$ is a square of a positive number. By this observation, in the remaining cases, $R_1, R_{23}, R_4>0$ follows automatically from $D>0$ and $R_{23}>0$. In the generalized hyperbolic case
  $$\begin{gathered}
    s_1 = p_0p_1e^{p_1u} + p_2p_3e^{p_3u},\\
    s_1^2-s_2s =    -p_0p_2(p_1 - p_3)^2e^{(p_1+p_3)u},\\
    s_3s_1-s_2^2 = p_0p_1p_2p_3(p_1 - p_3)^2e^{(p_1+p_3)u}.
%    - s_2-ss_1^2+s_2s^2 =    p_0^2p_1^2p_2e^{(2p_1+p_3)u} - 2p_0^2p_1p_2p_3e^{(2p_1+p_3)u} + p_0^2p_2p_3^2e^{(2p_1+p_3)u}   + p_0p_1^2p_2^2e^{(p_1+2p_3)u}  - p_0p_1^2e^{p_1u} - 2p_0p_1p_2^2p_3e^{(p_1+2p_3)u}    + p_0p_2^2p_3^2e^{(p_1+2p_3)}u     - p_2p_3^2e^{p_3u}
  \end{gathered}$$
If the coefficients are as specified in case 1 in the theorem then $s_1 > 0$, $s$ has range $(-\infty, \infty)$ covering the range $(-\infty, -1]$ of $x^{\ft}\barx$ and $s_1^2-s_2s > 0$. For $D >0$, we need to show $- s_2-ss_1^2+s_2s^2>0$. Let $Z = e^{p_3u} > 0$, then $p_0e^{p_1u} = s - p_2Z$, thus, $s_1 = p_1(s - p_2Z) + p_3p_2Z$, $s_2 = p_1^2(s - p_2Z) + p_3^2p_2Z$, and $- s_2-ss_1^2+s_2s^2$ is
    $$\begin{gathered}- p_1^2(s - p_2Z) - p_3^2p_2Z - s(p_1(s - p_2Z) + p_3p_2Z)^2 +
      ( p_1^2(s - p_2Z) + p_3^2p_2Z)s^2,\\
- s_2-ss_1^2+s_2s^2    = -sp_2^2(p_1- p_3)^2Z^2 + p_2(p_1 - p_3)(p_1s^2 + p_1 - p_3s^2 + p_3)Z - p_1^2s.
  \end{gathered}$$
 In the above, $p_1(s^2 + 1)-p_3(s^2 -1) <0$ for $-s \geq 1$ from our assumption, all three coefficients of the polynomial in $Z>0$ are positive, thus $D > 0$ and hence $R_4 >0$.
    
% For $R_{23}$, $s_3s_1-s_2^2 \geq 0$ by our assumptions. Thus, $(s_3s_1-s_2^2)(1-s^2) + s_1^2(s_1^2-s_2s)>0$.
 For $R_{23}$, still set $Z = e^{p_3u}$, %now $s_3s_1-s_2^2 = p_1p_2p_3(p_1 - p_3)^2(s-p_2Z)Z$
\begin{equation}\begin{gathered}
 \frac{DR_{23}}{s_1^2(s_1^2-ss_2)e^{(p_1+p_3)u}}=-p_0p_2(p_1 - p_3)^2
 \left(p_1p_3(s^2-1)+(p_1s +p_2(p_3-p_1)Z)^2\right)\\
=-p_0p_2(p_1 - p_3)^2\left(p_2^2(p_1 - p_3)^2Z^2 -2p_1p_2s(p_1 - p_3)Z+ p_1(p_1s^2 + p_3s^2 - p_3)\right).
  \end{gathered}\label{eq:DR23}
\end{equation}
By assumption, all coefficients of the  polynomial in $Z$ are positive. To check the last coefficient $p_1((p_1+p_3)s^2-p_3)$ is positive, note that the second factor is also negative, as $p_1+p_3\leq 0$. Thus, $R_{23}> 0$.
 
For case 2, from our assumption, $\bs$ is an increasing function for $u <u_c$, with range $(-\infty, \bs(u_c))$, for $\bs(u_c) = \frac{p_2(p_1-p_3)}{p_1}(-\frac{p_2p_3}{p_0p_1})^{\frac{p_3}{p_1-p_3}} > 0$ covering the range of $x^{\ft}\barx$. In this range $s_1 > 0$, $s_1^2-s_2s>0$, $R_4>0$ by the same calculation. For $R_{23}$, each coefficient of the polynomial in \cref{eq:DR23} is again positive, thus $R_{23} > 0$ and $R_1>0$ follows.
%  factor out $-p_0p_2(p_1 - p_3)^2e^{(p_1+p_3)u}\geq 0$ from $(s_3s_1-s_2^2)(1-s^2) + s_1^2(s_1^2-s_2s)$, we need $$(1-s^2)(-p_1p_3) + s_1^2 = (1-s^2)(-p_1p_3) + (p_1(s - p_2Z) + p_3p_2Z)^2\geq 0.$$% It expands to $p_2^2(p_1 - p_3)^2Z^2 + 2p_1p_2s(p_3-p_1)Z +( p_1 + p_3)p_1s^2 - p_1p_3$,
 
For the Lambert case, since $a_2 <0, a_1 >0$, we have that $\bs(-\infty)=-\infty$ and $\bs(u_c) = -\frac{a_1}{a_2}e^{a_2u_c} > 0$. Hence $(-\infty,-1]\subset (\bs(-\infty), \bs(u_c)]$ and $\bs$ maps tothe entire range $(-\infty, -1]$ of $x^{\ft}\barx$. In this range, $s_1 > 0$. Set $Z =e^{a_2u}$ then $a_1ue^{a_2u} = s- a_0Z$, and $Z'=a_2Z$, thus
      $$\begin{gathered}s_1 = a_1e^{a_2u} + (a_0+a_1u)a_2e^{a_2u} =a_1Z + a_2s, \\
      s_2 = a_1Z' + a_2s_1 = a_1a_2 Z + a_2(a_1Z + a_2s) = 2a_1a_2Z + a_2^2s,\\
      s_3 = 2a_1a_2Z' + a_2^2s_1 = 2a_1a_2^2Z + a_2^2(a_1Z + a_2s) = 3a_1a_2^2Z + a_2^3s.
      \end{gathered}
      $$
      Hence, since $s\leq -1$, we can show $R_1, R_{23}, R_4$ are positive from
$$\begin{gathered}      
      s_1^2-s_2s = a_1^2Z^2> 0,\\
      - s_2-ss_1^2+s_2s^2 = -a_1^2sZ^2 - 2a_1a_2Z - a_2^2s > 0,\\
      (s_3s_1-s_2^2)(1-s^2)=-a_1^2a_2^2Z^2(1-s^2) \geq 0.
      %      (s_3s_1-s_2^2)(1-s^2) + s_1^2(s_1^2-s_2s) = \{a_1^4Z^2 + 2a_1^3a_2sZ + (2a_1^2a_2^2s^2 - a_1^2a_2^2)\}Z^2.
\end{gathered}$$

For the affine case, $a_2=0, s_1=a_1, s_2 =s_3=0, D=-sa_1^9$, thus when $a_1>0$, it is clear $R_1, R_{23}, R_4$ are positive. Finally, the log case is known from \cref{cor:antena}.
\end{proof}  

We will show the proof of \cref{prop:power} in \cref{appx:proofpower}.

As a sanity check, and also to show the coefficients $R_1, R_{23}, R_4$ for a classical example, we make a connection with the antena cost. For $\rS^n$ with $\bu(s) = \alpha(1-(s+1)^{\frac{1}{\alpha}})$, the curvature in \cref{prop:power} is scaled by a factor of $\alpha$, thus when $\alpha$ goes to $\infty$, formally, $\bu(s)$ converges to $-\log(1+s)$, the {\it reflector antenna cost}, while the coefficients $R_1, R_{23}, R_3$, scaled by $\alpha$, all converge to $(s+1)^{-2}$, consistent with \cref{eq:antennaRS}.

For another connection to known cases, we now give the coefficients for the square Riemannian distance cost on the sphere, obtained with a symbolic calculation tool. The positivity of the coefficients could be confirmed by calculus. The original proof of positivity is in \cite{Loeper}.
\begin{proposition}On $\rS^n$, for the square Riemannian distance cost $\bcc(x, \barx) = \bu(s)=\frac{1}{2}\arccos^2(x^{\ft}\barx)$, $\bs(u) = \cos(\sqrt{2u})$, with $w=\sqrt{2u}$, $0\leq w < \pi$
  %R_1 = \frac{2(2w^2\tan w + w\sin^2w - 3\sin^2w\tan w)}{w^2\sin^2w\tan w}\\
\begin{gather}R_1 = \frac{4w^2 + w\sin 2w - 3 + 3\cos 2w}{w^2\sin^2w},\\
R_{23}=\frac{2(\sin w-w\cos w )}{\sin^3w},\\
R_4= \frac{w(2w - \sin(2w))}{2\sin^4w}.
  \end{gather}  
\end{proposition}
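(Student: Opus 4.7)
The plan is to apply Theorem \ref{theo:R1234} directly: that theorem already expresses $R_1, R_{23}, R_4$ as rational functions of $s, s_1, s_2, s_3, s_4$, so the task reduces to computing these derivatives for $\bs(u) = \cos\sqrt{2u}$ and simplifying. Setting $w = \sqrt{2u}$ so that $dw/du = 1/w$, the chain rule gives $s_0 = \cos w$ and $s_1 = -\sin w / w$, and successive differentiation produces $s_2, s_3, s_4$ as rational expressions in $\sin w, \cos w$ and powers of $1/w$ (for example $s_2 = (\sin w - w\cos w)/w^3$).

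Next I would compute the building blocks that recur in the formulas for $D, R_1, R_{23}, R_4$. The key simplifications are $1 - s^2 = \sin^2 w$, $s_1^2 - s s_2 = (w - \sin w\cos w)/w^3$, and $-s s_1^2 - s_2(1-s^2) = -\sin^3 w / w^3$, each obtained by combining the previous expressions with $\sin^2 w + \cos^2 w = 1$. From here, $R_4 = s_1^4(s_1^2 - s s_2)^2 / D$ collapses almost immediately: most powers of $s_1$ and of $(s_1^2 - ss_2)$ cancel, leaving $w(w - \sin w \cos w)/\sin^4 w$, which equals $w(2w - \sin 2w)/(2\sin^4 w)$ via $\sin 2w = 2\sin w \cos w$. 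A parallel computation of $s_3 s_1 - s_2^2$ and then of $(s_3s_1 - s_2^2)(1-s^2) + s_1^2(s_1^2 - ss_2)$ yields $R_{23}$ after the same kind of trigonometric collapse.

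The main obstacle is $R_1$, whose numerator involves the four-term expression $(s_1^2 - ss_2)s_4 + ss_3^2 - 2s_1s_2s_3 + s_2^3$, plus the squared term $((s_3s_1 - s_2^2)(1-s^2) + s_1^2(s_1^2 - s_2 s))^2$. Expanding these honestly in $\sin w, \cos w, w$ produces large intermediate polynomials, and the cancellations that lead to the compact form $4w^2 + w\sin 2w - 3 + 3\cos 2w$ are not visible term-by-term; this is exactly the step where the authors' note about symbolic computation is essential. Concretely, I would expand everything in $\sin w, \cos w, w$, reduce using $\sin^2 w + \cos^2 w = 1$, divide through by the common factor $s_1^5 \cdot (s_1^2 - ss_2)\cdot(-ss_1^2 - s_2(1-s^2))$ from $D$, and then rewrite the surviving monomials via $2\sin w\cos w = \sin 2w$ and $1 - 2\sin^2 w = \cos 2w$ to match the stated formulas. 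A symbolic check (as suggested by the companion code reference \cite{NguyenMTWGitHub}) confirms the identity.
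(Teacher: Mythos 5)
Your proposal is correct and matches the paper's route: the paper simply states these coefficients were "obtained with a symbolic calculation tool" by substituting $\bs(u)=\cos\sqrt{2u}$ into the formulas for $R_1, R_{23}, R_4$ from Theorem~\ref{theo:R1234}, which is exactly what you do (and your intermediate simplifications, e.g.\ $s_1^2-ss_2=(w-\sin w\cos w)/w^3$ and $-ss_1^2-s_2(1-s^2)=-\sin^3 w/w^3$, check out and give the stated $R_4$ and $R_{23}$ directly).
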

\section{Examples of the optimal map and $\bcc$-convexity}\label{sec:optimalexample}
Given a $C^2$ convex potential function, the optimal map for $\bu(x^{\ft}\barx)$ could be evaluated by solving a scalar equation. The solution may be written explicitly in terms of the potential for functions in \cref{theo:sol}. We will analyze the requirement for optimality to realize in the interior of the defining domain, as opposed to the case the supremum is unattainable inside an open domain, which depends on the situation we sometimes say it is attainable at a limit point on the boundary or at infinitive.

We recall some basic concepts in \cite{KimMcCann,Loeper,WongYang}. We work on $\R^n$ with the usual inner product, so $A_{\ft}$ is $I_n$. Recall the cost-exponential $\cexp:\R^n\times\R^n$ satisfies $\cD\bcc(x, \cexp(x, \nu)) = -\nu$ for $x, \nu\in \R^n$. The existence of the solution $\barx=\cexp(x, \nu)$ comes from condition A1 (the \emph{twisted condition}). In our case, this means $\barx=\cexp(x, \nu)$ satisfies $\bu_1(x^{\sfT}\barx)\barx = -\nu$, or $\barx$ is propotional to $\nu$, with the scaling parameter $t = -\bs_1(u_{\nu})$ where $u_{\nu}$ solves the scalar equation $\bs(u_{\nu}) = -\bs_1(u_{\nu}) x^{\sfT}\nu$ if $\nu\neq 0$ (since $x^{\sfT}(\bu_1(x^{\sfT}\barx)\barx) = -x^{\sfT}\nu$). For the costs in \cref{theo:sol}
\begin{equation}
\begin{gathered}u_{\nu} = \frac{1}{p_3-p_1}\log(\frac{p_0(1 + p_1x^{\sfT}\nu)}{-p_2(1+p_3x^{\sfT}\nu)})\text{ for }\bs(u) = p_0e^{p_1u} + p_2e^{p_3u},\\
  u_{\nu}= \frac{-(a_1+a_0a_2)x^{\sfT}\nu  - a_0}{a_1(1  + a_2x^{\sfT}\nu)} \text{ for }\bs(u) = (a_0 + a_1u)e^{a_2u},\\
u_{\nu} = \frac{1}{b_2}(\arctan(\frac{-b_2x^{\sfT}\nu}{1+b_1x^{\sfT}\nu}) - b_3 + k\pi) \text{ for }\bs(u) = b_0e^{b_1u}\sin(b_2u + b_3).\label{eq:solvup}
\end{gathered}
\end{equation}
In the last equation, $k$ is choosen so that $u_{\nu}$ and $x^{\sfT}\barx$ are in the correct range. Even when $u_{\nu}$ is computable from these equations, if $\bs$ has several branches of inverse, we also need to verify $u_{\nu}$ belongs to the correct branch.

For potentials $\phi$ on $\cM$, $\psi$ on $\bcM$, the $\bar{\bcc}$- and $\bcc$-transforms \cite{Loeper} of $\phi$ and $\psi$ are
\begin{equation}\begin{gathered}
    \phi^{\bar{\bcc}}(\barx) = \sup_x (-\bcc(x, \barx)-\phi(x)),\\
    \psi^{\bcc}(x) = \sup_{\barx} (-\bcc(x, \barx)-\psi(\barx)).
\end{gathered}
\end{equation}
The function $\phi$ is $\bcc$-convex if $\phi^{\bar{\bcc}\bcc} = \phi$, or $\phi = \psi^{\bcc}$ for some $\psi$. %The above implies $-\phi^{\bcc} = \inf(\bcc(x, \barx)- (-\phi(x))).$% We define a function $-\phi$ to be $\bcc$-concave if $\phi$ is $\bcc$-convex, this agrees with \cite[Definition 1.10]{FSO}, \cite{WongYang}.
%the concave $\bcc$-transform of a function $f$ is  defined as $\inf(\bcc(x, y)- f(x))$, and $f$ is $\bcc$-concave if it is of the form $f(x)=\inf_y(\bcc(x, y)- g(y))$ for some function $g$. In this convention, .
Assume $\phi$ is sufficiently smooth and let $\grad_{\phi}$ be the gradient of $\phi$. If $\sup_x (-\bcc(x, \barx)-\phi(x))$ is attained inside the defining domain, the first-order condition is $\cD\bcc(x, \barx) =- \grad_{\phi}(x)$. Define
%Since $\bcc(x, \barx) = \bu(x^{\sfT}\barx)$ and $u_{\phi}$ solves $\bs(u_\phi) = -\bs_1(u_\phi) x^{\sfT}\grad_{\phi}(x)$, the optimal map $\bT$ maps
\begin{equation}  
\bT: x\mapsto x_{\bT} = -\bs_1(u_{\phi})\grad_{\phi}(x).\label{eq:Tuphi}
\end{equation}
At a critical point $x$ of $-\bcc(x, \barx)-\phi(x)$, we have $\bT x = \barx$. From \cite[Theorem 2.9]{GuillenMcCann}, a solution to the transport problem in that theorem (and mentioned in the introduction) has the form $\bT(x)$ for a $\bcc$-convex function $\phi$, and $\bT$ is called the \emph{optimal map}.

The second-order condition is $\hess^{\bcc}_{\phi}(x) = \rD^{2}\bcc(x, \barx)_{\barx=x_{\bT}} + \hess_{\phi}(x)$ is positive semidefinite.
From $\rD^{2}\bcc(x, \barx)_{\barx=x_{\bT}} = \bu_2(x^{\sfT}x_{\bT})x_{\bT}x_{\bT}^{\sfT} =-\frac{\bs_2(u_{\phi})}{\bs_1(u_{\phi})}\grad_{\phi}(x)\grad_{\phi}(x)^{\sfT}$ and from \cref{eq:secondord},
$$-\frac{\bs_2(u_{\phi})}{\bs_1(u_{\phi})} = P\frac{\bs(u_{\phi})}{\bs_1(u_{\phi})} -S =
-Px^{\sfT}\grad_{\phi}(x) - S$$
with constants $P$ and $S$ as in \cref{theo:sol}. Thus, the second order condition is
\begin{equation}\hess^{\bcc}_{\phi}(x) = (-Px^{\sfT}\grad_{\phi}(x) - S)\grad_{\phi}(x)\grad_{\phi}(x)^{\sfT}+ \hess_{\phi}(x)\succeq 0.\label{eq:secondorderphi}\end{equation}
For the classical case $\bs(u) = -u$, this reduces to $\hess_{\phi}\succeq 0$, or convexity of $\phi$. For the log-type cost $P=p_1p_3=0$, this reduces to the convexity condition of $-(p_1+p_3)e^{-(p_1+p_3)\phi}$, following \cite[Theorem 1]{WongInfo}.% We can now compute $\bT$ from \cref{eq:solvup}.

{\it We will use the notation $x_{\bT}= \bT x$ for the right-hand side of \cref{eq:Tuphi} even when $\phi$ is not known to be convex, if  $u_{\phi}$ is uniquely solved}. 

Simplest examples of $\bcc$-convex functions are studied in \cite{Loeper}. Let $I$ be a subset of $\cM$ and $x_0\in\cM$. Define $\hat{\psi}(\barx) = -\bcc(x_0, \barx)$ for $\barx\in I$, and $+\infty$ otherwise. Then
\begin{equation}\phi(x) := \hat{\psi}^{\bcc}(x) = \sup_{\barx\in I}\{-\bcc(x, \barx) +\bcc(x_0, \barx)\},
\end{equation}  
and $\phi$ is $\bcc$-convex. If $I$ is finite then $\phi(x) = \max_{\barx_i\in I}\{-\bcc(x, \barx_i) +\bcc(x_0, \barx_i)\}$, and in particular, when $\lvert I\rvert = 2$, then we get the function in \cite[Definition 2.8]{Loeper}.

\subsection{A sub-family of $\sinh$-type costs}
We now provide other examples of a $\bcc$-convex potential of $\sinh$-type, in particular, when
$\bs(u) = p_0e^{-ru}+ p_2e^{ru}$ (thus $p_3 =r=-p_1>0$), and $p_0> 0, p_2 < 0$. Then 
\begin{equation}\bu(s) = \frac{1}{r}\log(\frac{-s+\sqrt{s^2-4p_0p_2}}{2 |p_2|})
= -\frac{1}{r}\log(\frac{s+\sqrt{s^2-4p_0p_2}}{2p_0}).
\label{eq:sinhtypedef}
\end{equation}
Note that $\bs(u) = -2(-p_0p_2)^{\frac{1}{2}}\sinh(ru - \frac{1}{2}\log(\frac{p_0}{-p_2}))$. In this case, equation (\ref{eq:solvup}) becomes $u_{\nu} = \frac{1}{2r}\log(\frac{p_0(1-rx^{\sfT}\nu)}{-p_2(1+rx^{\sfT}\nu)})$. A special case is $\bs(u) = -\frac{\sinh(r u)}{r}$, or $p_0 = -p_2 =\frac{1}{2r}$, where $\bcc$ converges to the classical convex cost $-x^{\sfT}\barx$ when $r$ goes to $0$.
% hence the scaling factor simplifies to
%$$-\bs_1(u_{\nu}) = rp_0(- \frac{(p_0  -rp_0 x^{\sfT}\nu ) }{(p_2 + rp_2x^{\sfT}\nu )})^{-1/2}  - rp_2(- \frac{(p_0  -rp_0x^{\sfT}\nu ) }{(p_2 + rp_2x^{\sfT}\nu )})^{1/2}=\frac{2r(-p_2p_0)^{\frac{1}{2}}}{(1-r^2(x^{\sfT}\nu)^2)^{\frac{1}{2}}}.$$
%  \cexp(x; p) = \frac{2r(-p_2p_0)^{\frac{1}{2}}}{(1-r^2(x^{\sfT}p)^2)^{\frac{1}{2}}}p.

Let $\tphi$ be a $C^1$ $\bcc$-convex function on an open subset of $\R^n$. To attain optimality, we need $u_{\nu}$ to be defined for $\nu=\grad_{\phi(x)}$, or $|x^{\sfT}\grad_{\tphi}(x)| < \frac{1}{r}$. Simplify the factor $-\bs_1(u_{\grad_{\tphi}(x)})$, the map $\bT$ and the second order condition \cref{eq:secondorderphi} are given by
\begin{gather}\bT(x) = \cexp(x; \grad_{\tphi}(x)) = \frac{2r(-p_2p_0)^{\frac{1}{2}}}{(1-r^2(x^{\sfT}\grad_{\tphi}(x))^2)^{\frac{1}{2}}}\grad_{\tphi}(x),\\
\hess^{\bcc}_{\tphi}(x) =  \hess_{\tphi}(x)+ (r^2x^{\sfT}\grad_{\tphi}(x)) \grad_{\tphi}(x)\grad_{\tphi}(x)^{\sfT}\succeq 0.\label{eq:hesschyper}
\end{gather}
We now consider {\it absolutely-homogeneous convex functions}. The case of order $\alpha >1$ will be used to construct a sampling algorithm and a local divergence. The case of order $1$ is in \cref{sec:convexOrder1}.
%Let $\tone$ be the vector of entries $1$ in $\R^n$. If $f$ is a scalar function and $v\in\R^n$, we denote by $f(v)$ the vector $((f(v_i))_{i=1}^n$. %Denote by $\odot$ and $/$ the by-entry multiplication and division of vectors in $\R^n$.

\subsection{Absolutely-homogeneous function of order $\alpha > 1$}\label{ex:homogenousgeq1}
  Let $\tphi$ be a strictly convex, continuous, absolutely-homogeneous function of homogenous degree $\alpha > 1$ (thus, $\tphi(tx) = |t|^{\alpha}\tphi(x)$), and assume $\tphi$ is $C^1$ at $x\neq 0$ (in particular, $\tphi(x) > 0$ for $x\neq 0$). We have $x^{\sfT}\grad_{\tphi}(x) = \alpha \tphi(x)$. Further, assume $x\mapsto \grad_{\tphi}(x)$ is invertible for $x\in \R^n$, $x\neq 0$. This example covers powers of norms $|x|^{\alpha}_{\alpha}$ and the positive-definite quadratic forms $\frac{1}{2}x^{\sfT}C x$, and there is a rich family of nonnegative homogeneous convex polynomials.

Differentiate $\tphi(tx) = |t|^{\alpha}\tphi(x)$ in $x$, we get $\grad_{\tphi}(tx) = \frac{|t|^{\alpha}}{t}\grad_{\tphi}(x) = \sign(t)|t|^{\alpha-1}\grad_{\tphi}(x)$ if $t\neq 0$. This implies $\grad_{\tphi}^{-1}(t\barx) =\sign(t)|t|^{\frac{1}{\alpha-1}}\grad_{\tphi}^{-1}(\barx)$ if $t\neq 0$.  
\begin{proposition}\label{prop:homealphag1}
  Let $\tphi$ be a strictly convex, continuous, absolutely-homogeneous function $\tphi$ of order $\alpha>1$ in $\R^n$ with invertible $\grad_{\tphi}$ for $x\neq 0$. For all $\barx\in \R^n$, the function $L(., \barx): \R^n\mapsto \R$ defined by $x\mapsto \bu(x^{\sfT}\barx) + \tphi(x)$ has a unique global minimum  $x_{\opt,\barx}$. If $\barx\neq 0$, $x_{\opt,\barx}$ is the unique root of the equation:
\begin{equation}\frac{2r(-p_0p_2)^{\frac{1}{2}}}{(1-\alpha^2r^2\tphi(x)^2)^{\frac{1}{2}}}\grad_{\tphi}(x) = \barx\label{eq:homgeq1}
\end{equation}
and for $\barx=0, x_{\opt, \barx}=0$. Thus, $\psi(\barx) :=\tphi^{\bar{\bcc}}(\barx)$ is defined for all $\barx\in\R$ as $-L(x_{\opt, \barx},\barx)$ with $\barx^{\sfT}\grad_{\psi}(\barx)<\frac{1}{r}$ and the function $\phi(x)$ below is $\bcc$-convex
\begin{equation}\phi(x):=\tphi^{\bar{\bcc}\bcc}(x) =\begin{cases}\tphi(x)\text{ if } \tphi(x) <\frac{1}{r\alpha}, \\ \frac{1}{r\alpha}(1+\log(r\alpha\tphi(x))\text{ otherwise.}\end{cases}\end{equation}
\end{proposition}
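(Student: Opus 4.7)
The plan is to (i) establish existence and uniqueness of the minimizer $x_{\opt, y}$ of $L(\cdot, y)$, (ii) identify \cref{eq:homgeq1} as the first-order condition, (iii) verify the envelope bound $y^{\sfT}\grad_{\tphi}(y) < 1/r$ on $\psi$, and (iv) compute the biconjugate $\phi = \psi^{\bcc}$ in the two regimes. Steps (i)--(iii) are essentially computational; step (iv) carries the main content. For (i), strict convexity combined with absolute homogeneity of degree $\alpha > 1$ makes $\tphi$ positive off the origin, so its minimum on the unit sphere is a positive constant $c$ and $\tphi(x) \geq c|x|^{\alpha}$. The explicit form in \cref{eq:sinhtypedef} shows $|\bu(s)|$ grows at most logarithmically in $|s|$, hence $L(\cdot, y) = \bu(x^{\sfT}y) + \tphi(x)$ is continuous and coercive and attains a global minimum; at $y = 0$ this minimum is clearly $x = 0$.

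For (ii) and (iii): for $y \neq 0$ a critical point of $L(\cdot, y)$ satisfies $\grad_{\tphi}(x) = -\bu_1(x^{\sfT}y)y$, and since $\bu_1(s) = -1/(r\sqrt{s^2 - 4p_0p_2}) < 0$, $\grad_{\tphi}(x)$ is a strictly positive multiple of $y$. Eliminating the multiplier via Euler's identity $x^{\sfT}\grad_{\tphi}(x) = \alpha\tphi(x)$ produces exactly \cref{eq:homgeq1}. Invertibility of $\grad_{\tphi}$ on $\R^n \setminus \{0\}$, together with the homogeneity identity $\grad_{\tphi}^{-1}(\lambda z) = \lambda^{1/(\alpha-1)}\grad_{\tphi}^{-1}(z)$ for $\lambda > 0$, parametrizes all candidate critical points by a single scalar $t > 0$ along the ray through $\grad_{\tphi}^{-1}(y)$ and reduces the condition to a strictly monotone scalar equation with a unique positive root; coercivity promotes this to the global minimum, defining $\psi(y) = -L(x_{\opt, y}, y)$. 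The envelope identity $\grad_{\psi}(y) = -\brD\bcc(x_{\opt}, y) = -\bu_1(x_{\opt}^{\sfT}y) x_{\opt}$ then gives $y^{\sfT}\grad_{\psi}(y) = x_{\opt}^{\sfT}y / (r\sqrt{(x_{\opt}^{\sfT}y)^2 - 4p_0p_2})$, which is strictly less than $1/r$ because $-4p_0p_2 > 0$ forces $\sqrt{s^2 - 4p_0p_2} > |s|$ for every $s$.

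The main obstacle is (iv). When $\tphi(x) < 1/(r\alpha)$, take $y_*$ as produced by \cref{eq:homgeq1}: by construction $\psi(y_*) = -\bcc(x, y_*) - \tphi(x)$, hence $\phi(x) \geq \tphi(x)$, and combining with the universal inequality $\phi = \psi^{\bcc} \leq \tphi$ yields equality. When $\tphi(x) \geq 1/(r\alpha)$, no interior critical point of $y \mapsto -\bcc(x, y) - \psi(y)$ exists: the outer first-order condition $\bu_1(x^{\sfT}y)x = \bu_1(x_{\opt, y}^{\sfT}y)x_{\opt, y}$ forces $x_{\opt, y} = sx$ and, after squaring the explicit $\bu_1$ identity and matching signs, yields uniquely $s = 1$, i.e.\ $x_{\opt, y} = x$, contradicting $\tphi(x_{\opt, y}) < 1/(r\alpha)$. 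The supremum is therefore a directional limit at infinity. Along a unit direction $v$ with $x^{\sfT}v > 0$, the optimizer $x_{\opt, \lambda v}$ converges as $\lambda \to \infty$ to the unique boundary point $x_v^* \in \partial K$ of the sublevel set $K = \{z : \tphi(z) \leq 1/(r\alpha)\}$ whose $\grad_{\tphi}$ is parallel to $v$; using the asymptotic $-\bu(s) \sim r^{-1}\log(s/p_0)$ as $s \to +\infty$ and $\tphi(x_v^*) = 1/(r\alpha)$, the directional limit of $-\bcc(x, \lambda v) - \psi(\lambda v)$ simplifies to $r^{-1}\log(x^{\sfT}v / (x_v^*)^{\sfT}v) + 1/(r\alpha)$, while directions with $x^{\sfT}v \leq 0$ diverge to $-\infty$. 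Taking the supremum over $v$ is exactly the gauge-support duality $\sup_v x^{\sfT}v / h_K(v) = p_K(x)$; absolute homogeneity gives $p_K(x) = (r\alpha\tphi(x))^{1/\alpha}$, and substituting yields $\phi(x) = (r\alpha)^{-1}(1 + \log(r\alpha\tphi(x)))$, with continuity at $\tphi(x) = 1/(r\alpha)$ reconciling the two branches.
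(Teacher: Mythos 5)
Your proof is correct and arrives at the same formulas, but by a genuinely different route in the two places where the real work lies. For the minimizer, the paper first minimizes $L(\cdot,y)$ on each level set $\tphi=\Delta/\alpha$ via a Lagrange multiplier and then shows $\Delta\mapsto L_{\opt}(\Delta)$ is convex with a unique interior minimum (\cref{eq:Deltamin}); you instead observe that any critical point has $\grad_{\tphi}(x)$ positively proportional to $y$, hence lies on the ray through $\grad_{\tphi}^{-1}(y)$, so the first-order condition collapses to a strictly monotone scalar equation in the ray parameter, and coercivity then upgrades the unique critical point to the global minimum --- a more economical argument that yields \cref{eq:homgeq1} just as directly. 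Your envelope-theorem verification of $y^{\sfT}\grad_{\psi}(y)<\frac{1}{r}$ and your explicit exclusion of interior critical points of $y\mapsto-\bcc(x,y)-\psi(y)$ when $\tphi(x)\ge\frac{1}{r\alpha}$ (the scaling identity forcing $s=1$, hence $x_{\opt,y}=x$, contradicting $\tphi(x_{\opt,y})<\frac{1}{r\alpha}$) make explicit what the paper leaves implicit. For the second branch of $\tphi^{\bcc\bcc}$, the paper changes variables to $z=x_{\opt,y}$, maximizes over each level set $\tphi(z)=\Delta$ using the convexity gradient inequality, and lets $\Delta\to\frac{1}{r\alpha}$, whereas you take radial limits $y=\lambda v$, identify $(x_v^*)^{\sfT}v$ with the support function $h_K(v)$ of $K=\{\tphi\le\frac{1}{r\alpha}\}$, and invoke the gauge--support duality $\sup_v x^{\sfT}v/h_K(v)=p_K(x)=(r\alpha\tphi(x))^{1/\alpha}$; this is an attractive geometric repackaging of the same computation. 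The one point to tighten is your passage from the supremum over all $y$ to the supremum of directional limits: a maximizing sequence only has directions converging along a subsequence, so a sentence on uniformity is needed (compactness of the sphere of directions, continuity of $v\mapsto x_v^*$, and the locally uniform logarithmic asymptotics of $\bu$), but this informality is of the same order as the paper's own closing assertion that the supremum is only approached in the boundary limit.
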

\begin{proof}
For $\Delta > 0$, consider the hypersurface $H_{\Delta}$ defined by $\tphi(x) =\frac{\Delta}{\alpha}$ bounding the convex region $\tphi(x) \leq \frac{\Delta}{\alpha}$. A critical point of $L(., \barx)$ on $H_{\Delta}$ solves $u_1(x^{\sfT}\barx )\barx + \grad_{\tphi}(x) = \lambda \grad_{\tphi}(x)$, with $\bu_1(s) = -\frac{1}{r(s^2-4p_0p_2)^{\frac{1}{2}}}<0$. Thus, $\barx\neq 0 $ is proportional to $\grad_{\tphi}(x)$, $t\barx = \grad_{\tphi}(x)$ for $t\in\R$, and $x= \grad_{\tphi}^{-1}(t\barx)$ with $t$ solving the equation:
$$t\barx^{\sfT}\grad_{\tphi}^{-1}(t\barx) = x^{\sfT}\grad_{\tphi}(x) =\alpha\tphi(x)= \Delta.$$
Hence, $|t|^{1+\frac{1}{\alpha-1}}\barx^{\sfT}\grad_{\tphi}^{-1}(\barx) = \Delta$. Therefore, in general, there are two critical points corresponding to the maximum and minimum, with $t=\pm(\frac{\Delta}{\barx^{\sfT}\grad_{\tphi}^{-1}(\barx)})^{\frac{\alpha-1}{\alpha}}$, we get
%There are two critical points, one is max, the other is min. $y^Tgrad_{\tphi}^{-1}(ty) > 0$.
$$\begin{gathered}
%t = (\frac{\Delta}{y^{\sfT}\grad_{\tphi}^{-1}(y)})^{\frac{\alpha-1}{\alpha}},\\
  x_{\text{critical}} = \pm(\frac{\Delta}{\barx^{\sfT}\grad_{\tphi}^{-1}(\barx)})^{\frac{1}{\alpha}}\grad_{\tphi}^{-1}(\barx),\\
  s = \pm\Delta^{1/\alpha}(\barx^{\sfT}\grad_{\tphi}^{-1}(\barx))^{\frac{\alpha-1}{\alpha}}.
\end{gathered}$$
Since $\bu$ is decreasing, the minimum $L_{\opt}(\Delta;\barx)$ corresponds to the positive sign  
$$L_{\opt}(\Delta) = L_{\opt}(\Delta; \barx)=L(x_{\min, \Delta}, \barx) = \bu(\Delta^{1/\alpha}(\barx^{\sfT}\grad_{\tphi}^{-1}(\barx))^{\frac{\alpha-1}{\alpha}}) + \frac{\Delta}{\alpha}.$$
With $K = K(\barx) = (\barx^{\sfT}\grad_{\tphi}^{-1}(\barx))^{\frac{\alpha-1}{\alpha}}$, the derivative $\frac{d}{d\Delta}L_{\opt}(\Delta)$ is
$$-\frac{1}{\alpha r}(\Delta^{2/\alpha}K^2-4p_0p_2)^{-\frac{1}{2}} \Delta^{1/\alpha-1}K + \frac{1}{\alpha}= -\frac{K}{\alpha r}(\Delta^{2}K^2-4p_0p_2\Delta^{2-\frac{2}{\alpha}})^{-\frac{1}{2}} +\frac{1}{\alpha},$$
which increases with $\Delta$ (note, $p_0p_2<0$). Therefore, $L_{\opt}$ is convex as a function of $\Delta$ with a global minimum, the unique positive root $\Delta_{\opt}$ of $\frac{d}{d\Delta}L_{\opt}(\Delta) = 0$, or
\begin{equation}K(\barx)^2\Delta_{\opt}^2-4p_0p_2\Delta_{\opt}^{\frac{2\alpha-2}{\alpha}} - \frac{K(\barx)^2}{r^2}=0.\label{eq:Deltamin}\end{equation}
Thus, $\tphi^{\bar{\bcc}}(\barx) = -L_{\opt}(\Delta_{\opt}(\barx))$ with $\Delta_{\opt}(\barx)$ solving \cref{eq:Deltamin}, and $\tphi^{\bar{\bcc}}(\barx)$ is $\bcc$-convex and defined for all $\barx\in\R^n$, including $\tphi^{\bar{\bcc}}(0) = 0$.

Since $L(x, \barx)\geq L_{\opt}(\Delta_{\opt}(\barx))=-\tphi^{\bar{\bcc}}(\barx)$, $\bu(x^{\sfT}\barx) +\tphi^{\bar{\bcc}}(\barx)+ \tphi(x)\geq 0$ for all $x$ and $\barx$, with equality if and only $x$ solves \cref{eq:homgeq1}. Hence, for fixed $x$, the function $R(x, .): \barx\mapsto -\bu(x^{\sfT}\barx) -\tphi^{\bar{\bcc}}(\barx)$ is bounded above by $\tphi(x)$. For \cref{eq:homgeq1} to be well-defined, we need $\tphi(x) < \frac{1}{\alpha r}$. Conversely, if we have this condition, then the upper bound of $R(x, .)$ is reached with $\barx$ in \cref{eq:homgeq1}, hence, $\tphi^{\bar{\bcc}\bcc}(x) = \tphi(x)$.

Assume $\tphi(x) \geq \frac{1}{\alpha r}$, then $R(x, \barx) = -\bu(x^{\sfT}\barx) + \bu(x_{\opt, \barx}^{\sfT}\barx)+ \tphi(x_{\opt, \barx})$, thus, with $z=x_{\opt, \barx}$
\begin{equation}\tphi^{\bar{\bcc}\bcc}(x) = \sup_{\tphi(z) < \frac{1}{r\alpha }}\{-\bu(\frac{2r(-p_0p_2)^{\frac{1}{2}}x^{\sfT}\grad_{\tphi}(z)}{(1-r^2\alpha^2 \tphi(z)^2)^{\frac{1}{2}}}) + \bu(\frac{2r(-p_0p_2)^{\frac{1}{2}}\alpha \tphi(z)}{(1-r^2\alpha^2\tphi(z)^2)^{\frac{1}{2}}})+ \tphi(z) \}.
  \label{eq:phicchom}
\end{equation}
We show the supremum is approached at a limit point on the boundary $\tphi(z) = \frac{1}{r\alpha}$. Consider again the hypersurface $\tphi(z) = \Delta <\frac{1}{r\alpha}$, since $-\bu$ is increasing, we need to maximize $x^{\sfT}\grad_{\tphi}(z)$ subjected to $\tphi(z) = \Delta$. Let $z_{\Delta} := (\frac{\Delta}{\tphi(x)})^{\frac{1}{\alpha}}x$ on this hypersurface. Since $\tphi$ is strictly convex, $\tphi(z_{\Delta}) -\tphi(z) - (z_{\Delta}-z)^{\sfT}\grad_{\tphi}(z)\geq 0$, hence $(z_{\Delta}-z)^{\sfT}\grad_{\tphi}(z)\leq 0$, thus
$$x^{\sfT}\grad_{\tphi}(z) = (\frac{\tphi(x)}{\Delta})^{\frac{1}{\alpha}} z_{\Delta}^{\sfT}\grad_{\tphi}(z)\leq (\frac{\tphi(x)}{\Delta})^{\frac{1}{\alpha}}\alpha\Delta
$$
with equality only at $z=z_{\Delta}$, and the quantity to maximize in \cref{eq:phicchom} is
$$-\bu(s_x) + \bu(s_z)+ \Delta =\frac{1}{r}\log(\frac{s_x +(s_x^2-4p_0p_2 )^{\frac{1}{2}}}{s_z+(s_z^2-4p_0p_2)^{\frac{1}{2}}}) + \Delta
$$
with $s_z = \frac{2r(-p_0p_2)^{\frac{1}{2}}\alpha \Delta}{(1-r^2\alpha^2\Delta^2)^{\frac{1}{2}}},
s_x=\frac{2r(-p_0p_2)^{\frac{1}{2}}(\frac{\tphi(x)}{\Delta})^{\frac{1}{\alpha}}\alpha\Delta}{(1-r^2\alpha^2 \Delta^2)^{\frac{1}{2}}}$. As $\Delta$ approaches $\frac{1}{r\alpha}$, both $s_x$ and $s_z$ are unbounded but the quantity approaches
$$\begin{gathered}\lim_{\Delta\to\frac{1}{r\alpha}}\frac{1}{r}\log(\frac{s_x}{s_z}\frac{1+ (1-\frac{4p_0p_2}{s_x^2})^{\frac{1}{2}}}{1+(1-\frac{4p_0p_2}{s_z^2})^{\frac{1}{2}}}) + \Delta
  =\frac{1}{r}\log(\frac{\tphi(x)}{(r\alpha)^{-1}})^{\frac{1}{\alpha}}+\frac{1}{r\alpha}
  =\frac{1}{r\alpha}\log(r\alpha\tphi(x)) +\frac{1}{r\alpha}.
\end{gathered}$$
%\lim_{\Delta\to\frac{1}{r\alpha}}\frac{1}{r}\log(\frac{s_x}{s_z}) +\Delta
The supremum is not at an interior point in this case, otherwise, as just argued, the maximum $\barx$ for $R(., x)$ is related to $x$ by \cref{eq:homgeq1}, contradicting $\tphi(x) \geq \frac{1}{\alpha r}$. Thus, the above is the supremum.
\end{proof}
\begin{remark}\label{rem:quadraticPotential}
  When $\tphi(x) = \frac{1}{2}x^{\sfT}Cx$ for a positive definite matrix $C$, then $g_{\tphi}(x) = Cx$. Let $W = \frac{1}{2}\barx^{\sfT}C^{-1}\barx$ then $x_{\opt} = \frac{C^{-1}\barx}{\left(-2p_0p_2r^2 + 2\sqrt{(p_0p_2r^2)^2 + r^2W^2}\right)^{\frac{1}{2}}}$, thus
$$\begin{gathered}\tphi^{\bar{\bcc}}(\barx) = -\frac{1}{2r}\log(\frac{\sqrt{(rp_0p_2)^2+ W^2} - W}{p_2^2r})
-   \frac{W}{2(-p_0p_2r^2 + \sqrt{(p_0p_2r^2)^2 + r^2W^2})}
\end{gathered}.$$
In general, $\tphi^{\bar{\bcc}}(\barx)$ could be expressed as function of $\barx^{\sfT}\grad_{\tphi}^{-1}(\barx)$ using \cref{eq:Deltamin}.
\end{remark}
For $n=1$, in \cref{fig:dual}, on the top, we graph examples from $\varphi(x)=|x|^{\alpha}$. The defining domain of the optimal map is $|x| < (r\alpha)^{-\frac{1}{\alpha}}$. The Legendre conjugate (the classical convex conjugate corresponding to $r\to 0$) is $\phi^*(\barx) =\alpha^{-\frac{\alpha}{\alpha-1}}(\alpha - 1)|\barx|^{\frac{\alpha}{\alpha-1}}$. We plot the $\bcc$-conjugates on the top part for $\alpha=1.8$, and observe for small $r$ the hyperbolic conjugates are close to the Legendre conjugate. In the bottom part, we plot another example together with the optimal maps. Note that the optimal maps go to infinitive as $x$ approaches the boundaries $\pm(r\alpha)^{-\frac{1}{\alpha}}$ at different speeds for different $r$'s.
\begin{figure}[ht!]
  \centering
  \includegraphics[width=.5\textwidth]{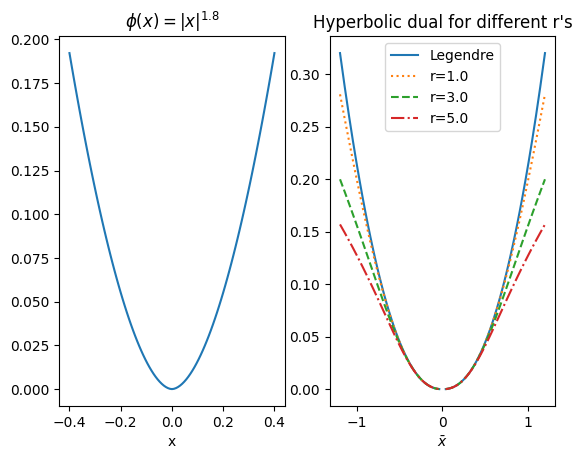}  
  \includegraphics[width=\textwidth]{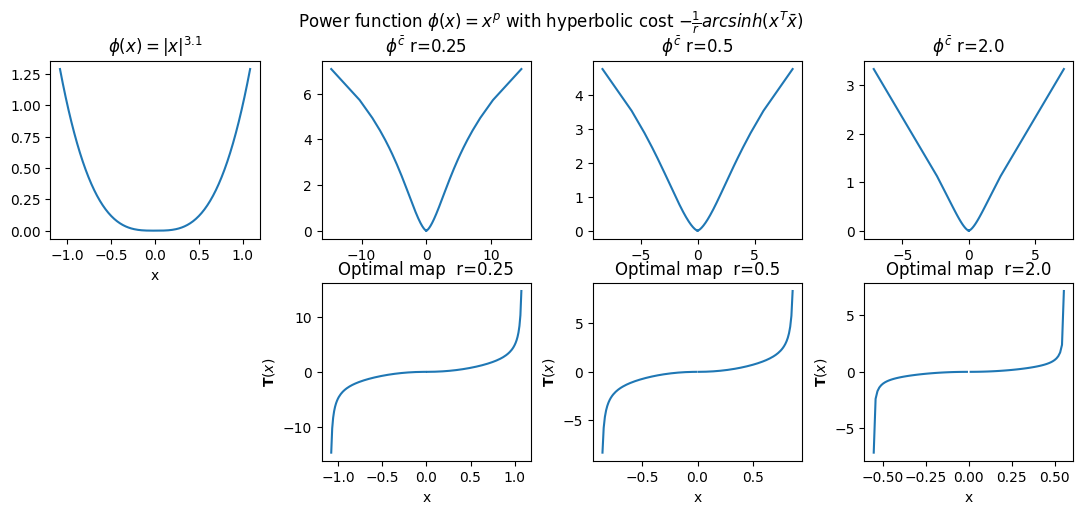}
  \caption{Optimal map and $\bcc$-conjugate functions under hyperbolic cost $-\frac{1}{r}\arcsinh(rx^{\sfT}\barx)$. Top: hyperbolic conjugates for different $r$ together with the Legendre conjugate. Bottom: conjugates and optimal map for different $r$. The ranges of $\barx$ correspond to the portion of the $x$-grid inside the domain of $\bT$.}
    \label{fig:dual}
\end{figure}
%\end{example}
\section{Divergence}\label{sec:divergence}Following \cite{WongYang}, (the result there is in terms of concave functions, we restate it in terms of convex functions), given a $\bcc$-convex function $\phi$ then the $\bcc$-divergence
\begin{equation}\fD[x: x'] := \bcc(x, x'_{\bT}) + \phi(x) + \phi^{\bar{\bcc}}(x'_{\bT})
=\bcc(x, x'_{\bT}) + \phi(x)- \bcc(x', x'_{\bT}) -\phi(x')\label{eq:cdivergence}
  \geq 0\end{equation}
is non-negative for two points $x, x'\in\cM$. If $\bT$ is one-to-one and optimality is attainabled at a single point then equality is only at $x = x'$. Thus, if $\bcc$ is the cost in \cref{eq:sinhtypedef}, with $\bT$ given by the left-hand side of \cref{eq:homgeq1}, then in the region $|x^{\sfT}\grad_{\phi}(x)| < \frac{1}{r}$ set $v = x^{\sfT}\grad_{\phi}(x'), w = (x')^{\sfT}\grad_{\phi}(x')$, then
$$\begin{gathered}
(x')^{\sfT}x'_{\bT} + (((x')^{\sfT}x'_{\bT})^2-4p_0p_2)^{\frac{1}{2}}
  =\frac{2r(-p_0p_2)^{\frac{1}{2}}w}{(1-r^2w^2)^{\frac{1}{2}}}
  + \left(\frac{4r^2(-p_0p_2)w^2}{(1-r^2w^2)}-4p_0p_2 \right)^{\frac{1}{2}}\\
  = \frac{2(-p_0p_2)^{\frac{1}{2}}(r w+ 1 )}{(1-r^2w^2)^{\frac{1}{2}}},\\
  x^{\sfT}x'_{\bT} + ((x^{\sfT}x'_{\bT})^2-4p_0p_2)^{\frac{1}{2}} =
  \frac{2r(-p_0p_2)^{\frac{1}{2}}v}{(1-r^2w^2)^{\frac{1}{2}}} \
  + (\frac{4r^2(-p_0p_2)v^2}{(1-r^2w^2)}
  -4p_0p_2)^{\frac{1}{2}}\\
 = \frac{2(-p_0p_2)^{\frac{1}{2}}}{(1-r^2w^2)^{\frac{1}{2}}}(rv + (r^2(v^2 - w^2)+1)^{\frac{1}{2}}).
%  XxYp = 2*r*jnp.sqrt(-p0*p2)/(1-r**2*pw**2*tPhi(xp)**2)**0.5*jnp.sum(x*gradtPhi(xp))     + 2*(-p0*p2)**.5*(r**2*(jnp.sum((x)*gradtPhi(xp))**2                              - jnp.sum((xp)*gradtPhi(xp))**2)+1)**.5/(1-r**2*pw**2*tPhi(xp)**2)**.5\\
\end{gathered}$$

Hence, for $\bcc$-convex (not necessarily homogeneous) $\phi$, $\fD$ becomes
\begin{equation}\label{eq:divergence}\phi(x) - \phi(x')- \frac{1}{r}\log
  \frac{rx^{\sfT}\grad_{\phi}(x') + \left(r^2(x^{\sfT}\grad_{\phi}(x'))^2
    - r^2((x')^{\sfT}\grad_{\phi}(x'))^2+1\right)^{\frac{1}{2}}}{
    r(x')^{\sfT}\grad_{\phi}(x')+ 1 }.
%  \frac{1}{r}\log(\frac{x^{\sfT}\grad_{\phi}(x')+((x^{\sfT}\grad_{\phi}(x'))^2 -4p_0p_2)^{\frac{1}{2}}}{(x')^{\sfT}\grad_{\phi}(x')     +(((x')^{\sfT}\grad_{\phi}(x'))^2 - 4p_0p_2 )^{\frac{1}{2}}  }).
\end{equation}
This is a hyperbolic version of the Bregman divergence $\phi(x) - \phi(x') - \grad_{\phi}(x')^{\sfT}(x - x') \geq 0$, the latter is defined for classically strictly convex functions. Assuming
$|rx^{\sfT}\grad_{\phi}(x)| < 1$ for $x$ in the defining domain, this is equivalent to
\begin{equation}\frac{\sinh(r(\phi(x)-\phi(x')) }{r} + \cosh(r(\phi(x)-\phi(x'))(x')^{\sfT}\grad_{\phi}(x') - x\grad_{\phi}(x') \geq 0. \label{eq:divergence2}
\end{equation}
To see this, if $|r(x')^{\sfT}\grad_{\phi}(x')| < 1$ then \cref{eq:divergence} is nonnegative and is equivalent to $0 < K \leq e^{r(\phi(x) - \phi(x'))}$, where $K$ is the positive root of
$$f(X) :=(r(x')^{\sfT}\grad_{\phi}(x')+ 1)X^2 - 2rx^{\sfT}\grad_{\phi}(x')X + (r(x')^{\sfT}\grad_{\phi}(x')- 1).
$$
Since the quadratic function $f$ has exactly one positive root, this is equivalent to $\frac{1}{2r}e^{-r(\phi(x) - \phi(x'))}f(e^{r(\phi(x) - \phi(x'))}) \geq 0$, which simplifies to \cref{eq:divergence2}.
\subsection{The dualistic geometry of $\bcc$-divergences}\label{sec:dualistic}
%In \cite{LFV}, the authors relate the restriction of the Kim-McCann metric of a divergence cost with its dualistic geometry. The dual connections are just the $\cM$ and $\bcM$ components of the Levi-Civita connection of the Kim-McCann metric, evaluated on the diagonal of $\cM\times\cM$ ($\bcM=\cM$ for a divergence cost). Thus, the dualistic geometry of $\fD$ could be derived from the Kim-McCann connection of $\bcc$.

We start with some general results on $\bcc$-divergences. Let $\bcc$ be a regular cost. For simplicity, we will consider the case where $\cM$ and $\bcM$ are domains $\Omega$ and $\bOmg$ of $\R^n$. If a function $\phi$ is $\bcc$-convex, and for each $\barx\in \R^n$, the maximum $-\bcc(x, \barx) - \phi(x)$ exists at a unique inner point of the domain $\Omega$, then the correspondence between $\barx$ and that maximal point is a map, we assume its inverse is well-defined and is the optimal map $\bT=\bT_{\phi}$. Thus, $\bT_{\phi}x$ is defined as the point $\barx$ such that $\bcc(x, \barx) + \phi(x)$ is minimized at $x$. Recall the first-order and second-order conditions in \cref{sec:optimalexample}
%and the second-order condition requires $\hess^{\bcc}_{\phi}(x)$ below is positive-semidefinite.
\begin{gather} \cD\bcc(x, x_{\bT}) +\grad_{\phi}(x) = 0\label{eq:definebT},\\
 \hess^{\bcc}_{\phi}(x): \xi\mapsto  \rD_{\xi}\cD\bcc(x, x_{\bT}) +\hess_{\phi}(x)\xi  \succeq 0\text{ for }\xi \in \R^n.  \label{eq:hesscgeneral}
\end{gather}
We call a $\bcc$-convex potential $\phi$ is of $\bcc$-\emph{Legendre-type} if $\bT_{\phi}$ is a diffeomorphism (a smooth bijection with smooth inverse) between $\Omega$ and $\bOmg$, and $\hess^{\bcc}_{\phi}(x)$ is \emph{positive-definite}.
In this case, the $\bcc$-conjugate $\phi^{\barbcc}$ is given by $\phi^{\barbcc}(\barx) = - \bcc(\bT^{-1}_{\phi}(\barx), \barx) -\phi(\bT^{-1}_{\phi}(\barx))$. By $\bcc$-convexity, $\phi^{\barbcc\bcc}=\phi$, we have $\bT^{-1}_{\phi} = \bT^{\barbcc}_{\phi^{\barbcc}}$, where $\bT^{\barbcc}_{\phi^{\barbcc}}$ is defined as the transport map with cost $\barbcc(\barx, x) = \bcc(x, \barx)$ and $\phi^{\barbcc}$ in place of $\phi$. We now have a $\bcc$-convex version of the classical result \cite{Crouzeix}
\begin{proposition}\label{prop:cdivmetric}Let $\phi$ be a $\bcc$-convex potential of $\bcc$-Legendre type of the cost $\bcc(x, \barx)$ on $\cN\subset \Omega\times\bOmg\subset \cE\times \cE$. Assume $\hcN$ below is an open subset of $\cE\times\cE$
  \begin{equation}  \hcN= \{ (x, x') \in \Omega\times\Omega| (x, x'_{\bT})\in \cN\}.
  \end{equation}
Equip $\hcN$ with the Kim-McCann metric $KM_{\fD}$ of the $\bcc$-divergence cost $\fD$ in \cref{eq:cdivergence}. Then for a tangent vector $\hat{\xi} = (\xi, \xi')$ of $\hcN$ at $(x, x')$, we have
\begin{equation}\langle \hat{\xi}, \hat{\xi}\rangle_{KM_{\fD}} = - \brD_{d\bT(x')\xi'}\rD_{\xi}\bcc(x, x'_{\bT}).\label{eq:KMdiver}
\end{equation}
Here, $d\bT$ is the differential of $\bT$, given by the implicit function theorem
\begin{equation}\begin{gathered}
    \hess^{\bcc}_{\phi}(x') + \brD_{d\bT(x')\xi'}\cD\bcc(x', x'_{\bT})=0,\\
    d\bT(x')\xi'= - (\brD_{}\cD\bcc(x', x'_{\bT}))^{-1}\hess^{\bcc}_{\phi}(x').\\
%\text{ where }\bcc_{\phi}: (x, \barx)\mapsto \bcc(x, \barx) + \phi(x).
    \label{eq:cdT}
\end{gathered}    
\end{equation}
The restriction of $KM_{\fD}$ to the diagonal $(x, x)\in\hcN, x\in\Omega$ is the divergence metric of $\fD$, given by $\hess^{\bcc}_{\phi}$ in \cref{eq:hesscgeneral}. We have the $\bcc$-Crouzeix identity
\begin{equation}\begin{gathered}
    d\bT_{\phi}(x) d\bT^{\barbcc}_{\phi^{\bar{\bcc}}}(x_{\bT}) = I_{\bar{\Omega}},\\
    \left((\brD\cD\bcc)^{-1}\hess^{\bcc}_{\phi}(\rD\bcD\bcc)^{-1}\hess^{\barbcc}_{\phi^{\bar{\bcc}}}\right)_{x, x_{\bT}} = I_{\bar{\Omega}}.\label{eq:crouzeix}
\end{gathered}    
\end{equation}
\end{proposition}
\begin{proof}Let $\rD'$ denote the differentiation in the second variable $x'$ in $\hcN$, \cref{eq:KMdiver} follows from the definition $\langle \hat{\xi}, \hat{\xi}\rangle_{KM_{\fD}} = - \rD'_{\xi'}\rD_{\xi}\fD$ and the chain rule. Equation (\ref{eq:cdT}) follows by differentiating \cref{eq:definebT} in $x$ at $x=x'$.
  
For $\xi\in\R^n$, the divergence metric is
  $$\begin{gathered}  -(\rD'_{\xi}\rD_{\xi}\fD)_{x'=x} = (-\rD'_{\xi}(x'\mapsto \rD_{\xi}\bcc(x, x'_{\bT}) + \xi^{\sfT}\grad_{\phi(x)})_{x'=x}
    =\\ (-\brD_{d\bT(x')\xi'}(\rD_{\xi}\bcc(x, x'_{\bT})))_{x'=x} = \hess^{\bcc}_{\phi}(x)
  \end{gathered}    $$
using the first line of \cref{eq:cdT}. The first line of \cref{eq:crouzeix} follows from $\bT^{-1}_{\phi} = \bT^{\barbcc}_{\phi^{\barbcc}}$, and the second is its expansion.
\end{proof}
For the cost $\bcc(x, \barx) = - x^{\sfT}\barx$, $\brD\cD\bcc=I$ and $\hess^{\bcc}_{\phi} =\hess_{\phi}$, we get the classical Crouzeix identity \cite{Crouzeix}.
\begin{proposition}
On $\hcN\subset \Omega\times\Omega$ with the metric $KM_{\fD}$, we denote the two components of the Levi-Civita connection $\nabla^{KM_{\fD}}$ as $\nabla^1$ and $\nabla^{-1}$ and their Christoffel functions as $\Gamma^1$ and $\Gamma^{-1}$. On the diagonal, they are the pair of dualistic connections of the divergence $\fD$, called the primal and dual connections. We have
\begin{equation}\begin{gathered}\Gamma^1(x, x;\xi_1, \xi_2) = ((\rD\bcD\bcc)^{-1}\rD_{\xi_1}\rD_{\xi_2}\bcD\bcc)(x, x_{\bT}).\label{eq:GammaD1}
%  \Gamma^1(x, x; \xi_1, \xi_2) = (d\bT)^{-1}\left(\rD_{\xi_1}d\bT\xi_2+ \Gamma^{KM_{\bcc}}_{\bOmg}(x, x_{\bT}; d\bT\xi_1, d\bT\xi_2)\right)\\
%    = -(\hess^{\bcc}_{\phi})^{-1}\left((\brD\cD\bcc) \rD_{\xi_1}d\bT\xi_2+ \brD_{d\bT\xi_1}\brD_{d\bT\xi_2}\cD\bcc\right).
  \end{gathered}\end{equation}
%Above, all expressions without variable names are evaluated at $(x, x_{\bT})$ and $(\Gamma^{KM_{\bcc}}_{\Omega}, \Gamma^{KM_{\bcc}}_{\bOmg})$ is the Christofel function of the Kim-McCann metric with the cost $\bcc$.
\end{proposition}
\begin{proof}
Comparing \cref{eq:GammaKM} and \cite{Eguchi} (and also \cite{WongYang,LFV}), we see $\Gamma^1$ and $\Gamma^{-1}$ are components of the Levi-Civita connection of the Kim-McCann metric on the diagonal. The proof of \cref{eq:GammaD1} is by direct substitution as follows. The gradient $\cD'\fD$ in $x'$ is $(d\bT(x'))^{\sfT}\bcD\bcc(x, x'_{\bT}) - \grad_{\phi}(x') - \grad_{x'\mapsto -\bcc(x', x'_{\bT})}(x'),$
  \begin{equation}\rD_{\xi_2}\cD'\fD(x, x') = (d\bT(x'))^{\sfT}\rD_{\xi_2}\bcD\bcc(x, x'_{\bT})
=    \hess_{\phi}^{\bcc}(x')\rD \bcD\bcc(x', x'_{\bT})^{-1}\rD_{\xi_2}\bcD\bcc(x, x'_{\bT}),
  \end{equation}
  $$\begin{gathered}  \rD_{\xi_1}\rD_{\xi_2}\cD'\fD(x, x) = (\hess_{\phi}^{\bcc}(x')\rD\bcD\bcc(x', x'_{\bT})^{-1}\rD_{\xi_1}\rD_{\xi_2}\bcD\bcc(x, x'_{\bT}))_{x'=x}\\
    =(\hess_{\phi}^{\bcc}(x)\rD\bcD\bcc(x, x_{\bT})^{-1}\rD_{\xi_1}\rD_{\xi_2}\bcD\bcc(x, x_{\bT})).
  \end{gathered}$$
Thus, $\Gamma^1=(\rD\cD'\fD)^{-1}\rD_{\xi_1}\rD_{\xi_2}\cD'\fD(x, x)$ simplifies to \cref{eq:GammaD1}.
\end{proof}
For $\bcc(x, \barx)= -x^{\sfT}\barx$, we recover $\Gamma^1 =0$ for the Bregman divergence. While in general, these primal geodesics are not of closed-form, they could be solved numerically in a two-dimensional subspace, as seen from the following corollary
\begin{corollary}If $\bcc$ is of the form $\bcc(x, \barx) = \bu(x^{\ft}x)$, a solution of the primal geodesic equation $\ddot{x} + \Gamma^1(x, \dot{x}, \dot{x})=0$ of the $\bcc$-divergence $\fD$ with initial condition $x(0) = x_0, \dot{x}(0) = \dot{x}_0$ stays in the subspace spanned by $x_0$ and $\dot{x}_0$ .
\end{corollary}
This follows from \cref{eq:GammaD1} and \cref{eq:sGammax}, as $\ddot{x}$ is a linear combination of $x$ and $\dot{x}$.

In \cref{eq:dualistichyper} in the introduction, we also compute the $\Gamma^{-1}$ component for the hyperbolic case.
For $r=0$, we recover the Hessian metric of $\phi$ and the dualistic pair of connections of the Bregman divergence $\Gamma^1(\xi_1, \xi_2)=0, \Gamma^{-1}(\xi_1, \xi_2) = \hess_{\phi}^{-1}\phi^{(3)}(\xi_1, \xi_2)$.
\begin{proof}(of \cref{eq:dualistichyper})The statement on the divergence metric follows from \cref{prop:cdivmetric}. Since \cref{eq:divergence} is only dependent on $r$, we can take $\bs(u) = -\frac{1}{r}\sinh(ru)$, then $x_{\bT} = \frac{1}{(1-r^2(x^{\sfT}\ttg)^2)^{\frac{1}{2}}}\ttg$, and $s_0 = x^{\sfT}x_{\bT}=\frac{x^{\sfT}\ttg}{(1-r^2(x^{\sfT}\ttg)^2)^{\frac{1}{2}}}$, while $\cosh^2ru -\sinh^2 ru =1$ implies
  $s_1 = -\cosh (ru) = - (r^2s_0^2 +1)^{\frac{1}{2}} = \frac{-1}{(1- r^2(x^{\sfT}\ttg)^2)^{\frac{1}{2}}}, s_2 = r^2s_0, s_3 = r^2s_1 $, $s_0s_2-s_1^2=-1$, 
  $s_1s_3- s_2^2=r^2$. Thus, $\Gamma^1(\xi_1, \xi_2)$ evaluates to $\Gamma_x$ in \cref{eq:sGammax} at $(x, x_{\bT})$ to
  $$   \Gamma^1(\xi_1, \xi_2) = -r^2(1-r^2(x^{\sfT}\ttg)^2)\frac{\ttg^{\sfT}\xi_1\ttg^{\sfT}\xi_2}{1-r^2(x^{\sfT}\ttg)^2}x    - r^2x^{\sfT}\ttg (1-r^2(x^{\sfT}\ttg)^2)^{\frac{1}{2}} ((\xi_2^{\sfT}x_{\bT})\xi_1   + (\xi_1^{\sfT}x_{\bT})\xi_2)
  $$
  which simplifies to the first line of \cref{eq:dualistic}.

We compute $\Gamma^{-1}$ from \cref{eq:divergence}. Let $\ttg' = \grad_{\phi}(x')$, $\tth'= \hess_{\phi}(x')$ and set $F = F(x' | x) = r^2(x^{\sfT}\ttg')^2 - r^2((x')^{\sfT}\ttg')^2+1$ as a function of $x'$ then  
  $$\cD\fD(x, x') = \grad_{x\mapsto \fD(x, x')} =
\ttg - \frac{1}{r}\frac{r\ttg'+ r^2(x^{\sfT}\ttg')F^{-\frac{1}{2}}\ttg'}{rx^{\sfT}\ttg' + F^{\frac{1}{2}}}
=  \ttg - F^{-\frac{1}{2}}\ttg'$$
by factoring out $rF^{-\frac{1}{2}}\ttg'$ in the numerator of the middle ratio. Note 
$$\begin{gathered}
\grad_F(x') = \grad_{x'\mapsto F(x'| x)}(x') = 2r^2((x^{\sfT}\ttg')\tth'x
                - ((x')^{\sfT}\ttg')\ttg' - ((x')^{\sfT}\ttg')(\tth'x')
            ),\\
  \rD'_{\xi}\cD\fD(x, x')  = - F^{-\frac{3}{2}}(F \tth'\xi - \frac{1}{2}\xi^{\sfT}\grad_F(x')\ttg'
  ),
\end{gathered}$$
Note $F(x| x) = 1$, $\grad_{x'\mapsto F(x'| x)}(x) = -2r^2(x^{\sfT}\ttg)\ttg$ and at $x'=x$
$$\begin{gathered}  
  \hess_{x'\mapsto F(x'| x)}(x)\xi_1  = 2r^2((x^{\sfT}\tth\xi_1)\tth x 
  + x^{\sfT}\ttg\phi^{(3)}(x, \xi_1)
  \\
  - (\xi_1^{\sfT}\ttg)\ttg   - (x^{\sfT}\tth\xi_1)\ttg   - (x^{\sfT}\ttg)\tth\xi_1\\ 
  - (\xi_1^{\sfT}\ttg)\tth x
  - (x^{\sfT}\tth\xi_1)\tth x
  - (x^{\sfT}\ttg)\phi^{(3)}(x, \xi_1)
  - (x^{\sfT}\ttg)\tth\xi_1   
  )\\
= - 2r^2((\xi_1^{\sfT}\ttg)\ttg   + (x^{\sfT}\tth\xi_1)\ttg + 2(x^{\sfT}\ttg)\tth\xi_1
+ (\xi_1^{\sfT}\ttg)\tth x).
\end{gathered}$$
Then $\rD'_{\xi}\cD\fD(x, x)  = - \tth\xi  -r^2(\xi^{\sfT}\ttg x^{\sfT}\ttg)  \ttg=-\hess^{\bcc}_{\phi}\xi$ as already known, and
$$\begin{gathered}\rD'_{\xi_1}  \rD'_{\xi_2}\cD\fD(x, x)  = \frac{3}{2}(-2r^2\xi_1^{\sfT}\ttg x^{\sfT}\ttg) ( \tth\xi_2 + r^2(x^{\sfT}\ttg\xi_2^{\sfT}\ttg ) \ttg) \\
  - (\xi_1^{\sfT}\grad_F(x) \tth\xi_2 + \phi^{(3)}(\xi_1, \xi_2)
  - \frac{1}{2}(\xi_2^{\sfT}\hess_F(x)\xi_1)\ttg
  - \frac{1}{2}(\xi_2^{\sfT}\grad_F(x))\tth\xi_1)\\
=   - (3r^2\xi_1^{\sfT}\ttg x^{\sfT}\ttg) \tth\xi_2 - 3r^4((x^{\sfT}\ttg)^2\xi_1^{\sfT}\ttg  \xi_2^{\sfT}\ttg ) \ttg 
+ 2r^2(x^{\sfT}\ttg)(\xi_1^{\sfT}\ttg) \tth\xi_2 - \phi^{(3)}(\xi_1, \xi_2)\\
- r^2(\xi_1^{\sfT}\ttg\xi_2^{\sfT}\ttg + (x^{\sfT}\tth\xi_1)\xi_2^{\sfT}\ttg + 2x^{\sfT}\ttg(\xi_2^{\sfT}\tth\xi_1)
+ \xi_1^{\sfT}\ttg(\xi_2^{\sfT}\tth x)  )\ttg
   -r^2x^{\sfT}\ttg\xi_2^{\sfT}\ttg\tth\xi_1.
\end{gathered}$$
From here, we get the equation for $\Gamma^{-1} = (\rD'_{\xi}\cD\fD)^{-1}(\rD'_{\xi_1}  \rD'_{\xi_2}\cD\fD)$ at $x$. The Amari-Chentsov tensor is a straightforward calculation.
\end{proof}
The curvature of the primal connection can be computed from \cref{eq:curveGeneral}. We carried out the computation but do not have an interesting result to report. See the workbook \href{https://github.com/dnguyend/regularMTW/blob/main/colab/HyperbolicDualGraphAndDivergence.ipynb}{colab{/}HyperbolicDualGraphAndDivergence{.}ipynb} in \cite{NguyenMTWGitHub} for numerical verifications of an implementation of both the curvature and the Amari-Chentsov tensor.
\section{Applications}
\subsection{Hyperbolic Mirror sampling and the multivariable $t$-distribution}
The results in this section are implemented in \href{https://github.com/dnguyend/regularMTW/blob/main/colab/Multivariate_t_Python.ipynb}{colab{/}Multivariate\_t\_Python{.}ipynb} in \cite{NguyenMTWGitHub}. As mentioned in the introduction, we use the optimal map and its inverse to transfer a sampling problem to its dual domain. Let us summarize the main ideas. We want to compute the integral $\int_{\Omega}e^{-V(x)} dvol_{\Omega}$, where $dvol_{\Omega}$ is the Lebesgues volume form on a domain $\Omega$ in $\R^n$. If there is a bijective smooth map $F$ from a domain $\Omega'$ with Lebesgues volume form $dvol_{\Omega'}$ onto $\Omega$, then
$$e^{-V(x)} dvol_{\Omega} =  e^{-V\circ F(y)} |\det dF(y)| dvol_{\Omega'} = e^{-V\circ F(y) + \log |\det dF(y)|} dvol_{\Omega'} =: e^{-W(y)}dvol_{\Omega'}.
$$
Here, $dF$ is the differential of $F$, represented by the Jacobian matrix, and the above is the change of variable formula for a multivariable integral. In high dimensions, we often need to use Monte Carlo simulation, and a change of variable may lead to a more effective sampling.

Mirror Monte Carlo sampling, as mentioned, (see \cite{NEURIPS2018_Mirror,pmlrv125_zhang20a,AhnChewi} among others) considers the case where $F$ is given by an optimal map $F= \bT^{\barbcc}_{\phi^{\barbcc}}$ of a transport problem with the classical cost $-x^{\sfT}\barx$ and $\phi$ is a convex function on $\Omega$. In this case, $\bT^{\barbcc}_{\phi^{\barbcc}}=\grad_{\phi^{\barbcc}}$ and $dF(y) = \hess_{\phi^{\barbcc}}(y) = \hess_{\phi}(x)^{-1}$ where $x_{\bT} = y$. We want to consider the more general case of $F = \bT^{\barbcc}_{\phi^{\barbcc}}$ for a different cost $\bcc$ with a $\bcc$-convex potential $\phi$ of Legendre-type, where \cref{eq:cdT} gives a formula for $d\bT_{\phi}$, hence for $d\bT^{\barbcc}_{\phi^{\barbcc}}$. In particular,
\begin{equation}
  \det(d\bT(x)) = \det(- (\brD_{}\cD\bcc(x, x_{\bT}))^{-1})\det(\hess^{\bcc}_{\phi}(x)).\label{eq:detdT}
\end{equation}
For a cost of type $\bu(x^{\sfT}\barx)$, using \cref{eq:KMCs} and the Weinstein-Aronszajn identity
\begin{equation}
  \begin{gathered}
\det(- (\brD_{}\cD\bcc(x, x_{\bT}))^{-1}) = \det (- \frac{1}{s_1}I_n+   \frac{s_2}{s_1^3} x_{\bT} x^{\sfT}  )^{-1}= \frac{(-s_1)^{n+2}}{s_1^2-s_2s_0}.
    \end{gathered}
\end{equation}
Here, $s_0 = x^{\sfT}x_{\bT}$. Let $\ttg = \grad_{\phi}(x)$. For the hyperbolic cost $\bs = -\frac{1}{r}\sinh(ru)$, note $s_1^2-s_2s_0=1$. From the proof of \cref{eq:dualistichyper}, $s_1 = \frac{-1}{(1-r^2(x^{\sfT}\ttg)^2)^{\frac{1}{2}}}$, hence
\begin{equation}\det(- (\brD_{}\cD\bcc)(x, x_{\bT}))^{-1}) = (1-r^2(x^{\sfT}\ttg)^2)^{-\frac{n+2}{2}}.
\end{equation}
With an appropriate choice of potential, we can transport a sampling problem to a more effective one. We consider an application to the problem of evaluating an expectation or a probability of the form $Prob(l \leq L X\leq u)$ where $X$ is a random variable with a \emph{fat-tailed} distribution with values in $\R^n$, and $L$ is a matrix. Applications include copula calculation in actuarial science and credit derivatives. In high dimensions, without an analytic formula, Monte Carlo simulation is probably the only way to evaluate the probability, and the \emph{fat-tailed} assumption poses a significant challenge. A naive simulation based on a uniform distribution on a hypercube of the form $[-R\times R]^n$ requires a large number of simulation points. In $n=10$ dimensions in the cited workbook, we show numerically that a simulation of $N=10^5$ only captures on average $65\%$ of probability with $290\%$ of standard deviation and is not useful. To scale up to higher dimensions, we need to use special features of the distribution to be effective.

There is an extensive literature on sampling of the multivariate $t$-distributions $\mathbf{Y}\sim t_{\nu} = t_{\nu}(0, I_n)$ of $n$ dimensions, our main application. The parameter $\nu$ is the degree of freedom. The general case will be converted to this case by an affine transformation. Existing effective algorithms often use the representation $\mathbf{Y} = (\frac{\mathbf{U}}{\nu})^{-\frac{1}{2}}\mathbf{Z}$ with $\mathbf{U}\sim \chi^2$ and $\mathbf{Z}\sim N(0, I_n)$, with $\mathbf{U}$ and $\mathbf{Z}$ being independent. The main reference is \cite[Chapter 4]{GenzBretz}, which uses this chi-square formulation of the $t$-distribution, including an exact sampling. The state-of-the-art method to calculate probabilities of the form $Prob(l \leq L X\leq u)$ in \cite{GenzBretz} uses an additional separation of variables (SOV) method, which gives the standard implementation in R with a quasi-Monte Carlo method. The work \cite{BotevEcuyer} improves on this, allowing sampling of very rare events.

Alternatively, we observe that the reason the naive Monte Carlo simulation in $\R^n$ fails is that the data spreads out in space. If we could transport the problem to a confined space, then we could sample more effectively. The absolutely-homogeneous convex potential transport between a bounded sphere to $\R^n$ is thus a good candidate. Informally, we expect the mirrored sampling to be effective if the transported pdf packs into a ball-shaped region of a simple form. %This approach only requires knowing the density function.
\begin{figure}[ht!]
  \centering
  \includegraphics[width=\textwidth]{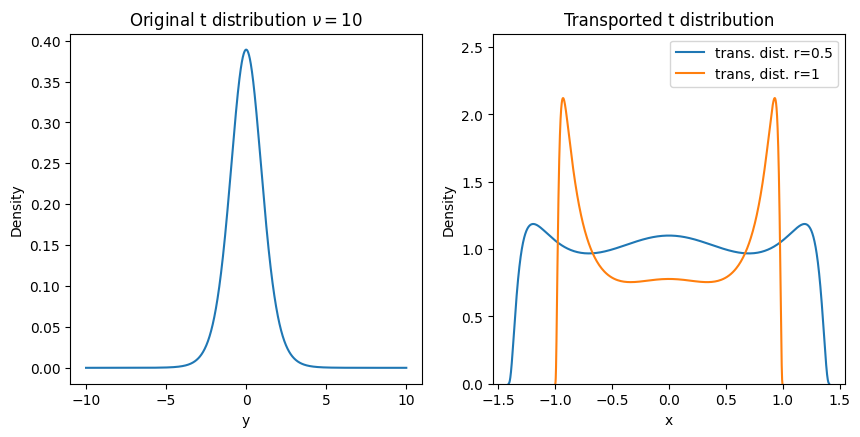}  
  \includegraphics[width=\textwidth]{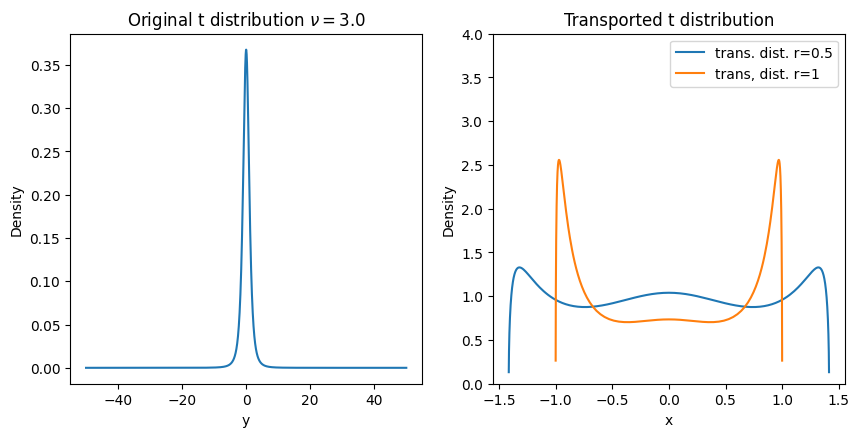}
\caption{Transporting $t$-distributions using hyperbolic costs and quadratic potentials $\phi(x)=x^2, |x| < r^{-\frac{1}{2}}$. The graphs of $e^{-W(y)}$ are on the left and of $e^{-V_{adj}(x)}$ are on the right. For the smaller $r=0.5$, the transported distribution is close to a uniform density, while the transported $r=1.$ has two peaks near the boundary points. For $\nu=10$, the densities at the two ends of $[-10,10]$ map close to the zero in the transported distribution on $[-r^{-\frac{1}{2}}, r^{-\frac{1}{2}}]$, while for the fatter-tailed $\nu=3$, even the points $\pm 50$ map further from zero in the transported distribution.}
    \label{fig:MVTtransport}
\end{figure}

Thus, consider the problem of sampling $e^{-W(y)}dvol_{\Omega'}$ for $\Omega'=\R^n$, where $W$ is the negative log density of the $t_{\nu}(0, I_n)$ distribution with $\nu$ degrees of freedom
\begin{equation}W(y) = \log\frac{\Gamma(\frac{\nu}{2})(\pi\nu)^{ \frac{n}{2}}}{\Gamma(\frac{\nu+n}{2})} + \frac{\nu+n}{2}\log(1 + \frac{1}{\nu}y^{\sfT}y).
\end{equation}

With a choice of $r$ and a positive-definite matrix $C\in \R^{n\times n}$, for the domain $\Omega$, we take the ellipsoid ball $B=\{x\in \R^n|\;(x^{\sfT}Cx)^2\leq \frac{1}{r}\}$ where $\phi(x) = \frac{1}{2}x^{\sfT}Cx$ in \cref{rem:quadraticPotential} is $\bcc$-convex. Then $\bT_{\phi}(x) = \frac{1}{(1-r^2(x^{\sfT}Cx)^2)^{\frac{1}{2}}}Cx$ maps $B$ to $\R^n$ bijectively. The $\bcc$-hessian is $C + r^2(x^{\sfT}Cx)Cxx^{\sfT}C$ with determinant $\det(C)(1+r^2 (x^{\sfT}Cx)^2)$. Using the change of variable formula, $y=\bT_{\phi}(x)$, by \cref{eq:detdT} and  the $\bcc$-Crouzeix's relation,
$$\begin{gathered}  
  V(x) =   W(\bT_{\phi}(x)) - \log\det d\bT_{\phi}(x) \\
  =\log\frac{\Gamma(\frac{\nu}{2})(\pi\nu)^{\frac{n}{2}}}{\Gamma(\frac{\nu+n}{2})}
            + \frac{\nu+n}{2}\log(1+\frac{\frac{1}{\nu}x^{\sfT}C^2x}{(1-r^2(x^{\sfT}Cx)^2)})\\
            + \frac{2+n}{2}\log(1-r^2(x^{\sfT}Cx)^2) \
           - \log(1+r^2(x^{\sfT}Cx)^2) - \log(\det(C),
\end{gathered}            $$
$$\begin{gathered}              
V(x)=  \log\frac{\Gamma(\frac{\nu}{2})(\pi\nu)^{\frac{n}{2}}}{\Gamma(\frac{\nu+n}{2})}
            + \frac{\nu+n}{2}\log(1- r^2(x^{\sfT}Cx)^2 + \frac{1}{\nu}x^{\sfT}C^2x) \\
            + (1-\frac{\nu}{2})\log(1-r^2(x^{\sfT}Cx)^2) \
            - \log(1+r^2(x^{\sfT}Cx)^2) - \log(\det(C)).
\end{gathered}            
$$
For $n=1$, the graphs of $W$ and $V_{adj}$ are plotted in \cref{fig:MVTtransport}, with $V_{adj}= V - \log(Vol(B))$. We note that the transported density $V_{adj}$ (which is $V$ shifted by a constant) is almost uniform for $r=0.5$, thus a uniform sampling is expected to be effective. Uniform sampling in the unit ball is well-known, and sampling on the ellipsoid is done by multiplying by $r^{-\frac{1}{2}}C^{-\frac{1}{2}}$. To evaluate an expectation $\E_{t_{\nu}}f(Y)$ for $Y\sim t_{\nu}$, we multiply the Monte Carlo average of $(f\circ \bT) e^{-V}$ from a uniform sampling in the ellipsoid ball $B$ with the volume of $B$, which is $\det(C)^{-\frac{1}{2}}r^{-\frac{n}{2}}\frac{\pi^{\frac{n}{2}}}{\Gamma(\frac{n}{2}+1)}$.
 Thus,
\begin{equation}\E_{t_{\nu}}f(Y) \approx \frac{1}{N} \sum_{i=1}^Nf(\bT_{\phi}(x_i))e^{-V_{adj}(x_i)}.
\end{equation}  
To calculate for $Z\sim t_{\nu}(\mu, \Sigma)$ with a location vector $\mu\in \R^n$ and scale matrix $\Sigma\in \R^{n\times n}$, we perform an affine change of variable $Y = L^{-1}(Z - \mu)$ where $\Sigma = LL^{\sfT}$ is a Cholesky decomposition. In applications, for example, in pricing risk tranches, we need to compute the probability where $Z$ belongs to a box $\mathtt{l}\leq LY \leq \mathtt{u}$ as mentioned. In this case, $f$ is an indicator function of that box.

In our first experiment with results in \cref{tbl:tprob1}, we take $\Sigma = LL^{\sfT}=2(I_n + \tone\tone^{\sfT})^{-1}$, and the box $[\mathtt{l}, \mathtt{u}] = [-1,\infty]^n$ as in \cite[Table 1]{BotevEcuyer}. Numerically, the results match well with the cited table, we note for large $n$ the number is closer to the  improved method of \cite{BotevEcuyer} than the method of the R package $\mathtt{mvtnorm}$ in \cite{GenzBretz}. Note that the sampling method $\mathtt{rvs}$ in $\mathtt{multivariate\_t}$ in scipy.stats returns $0$ for $n=100,150$, thus, we handle this situation better. For the second experiment $[\mathtt{l}, \mathtt{u}] = [0,\infty]^n$, with $C=\Sigma^{-1}$, the probabilities are very small, to the order of $10^{-17}$ for $n=20$, which is beyond the typical Python floating point accuracy of $10^{-15}$ in common matrix operations. We suspect this is the reason our method fails beyond $n=10$ for this example, while $\mathtt{mvtnorm}$ with the separation of variable method loop through scalar variables, thus, can avoid this problem.

Our next example in \cref{tbl:tprob2} also produces a match with \cite[Table 3]{BotevEcuyer}. Thus, mirror sampling is an effective method for the multivariate $t$-distribution, and we expect it to be effective for other fat-tailed distributions in high dimensions.

\begin{table}
  \resizebox{\textwidth}{!}{
    \begin{tabular}{lrrrrrrrr}
\toprule      
n & 5& 10& 20& 30& 40& 50& 100& 150\\
\midrule
Probability & 0.198& 0.0325& 0.00164& 0.000150& 2.08e-05& 3.72e-06& 6.73e-09& 8.95e-11\\
Std & 0.000960& 0.000441& 5.65e-05& 9.85e-06& 2.39e-06& 7.01e-07& 4.07e-09& 8.99e-11\\
\bottomrule
\end{tabular}
  }
\caption{Simulation for $\Sigma = LL^{\sfT}=2(I_n + \tone\tone^{\sfT})^{-1}$. Estimating $Prob(LY\in [-1,\infty]^n)$ for $v=10$ with $N=10^5$ samples repeated $100$ times. Compare with \cite[Table 1]{BotevEcuyer}.}
\label{tbl:tprob1}
\end{table}

\begin{table}
%  \resizebox{\textwidth}{!}{
    \begin{tabular}{lrrrrrrrr}
      \toprule
      n & 5& 30& 50& 100& 150\\
      \midrule
Probability & 0.0993& 0.0599& 0.0520& 0.0423& 0.0374\\
Std & 0.00108& 0.00109& 0.000861& 0.000924& 0.000842\\
\bottomrule
\end{tabular}
 % }
\caption{Simulation for $\Sigma = LL^{\sfT}=(1-\rho)I_n + \rho\tone\tone^{\sfT}$ for $\rho=.95$. Estimating $Prob(LY\in [1,3]^n)$ for $v=10$ with $N=10^5$ samples repeated $100$ times. Compare with \cite[Table 3]{BotevEcuyer}.}
\label{tbl:tprob2}
\end{table}
\subsection{A local divergence and latent spaces}\label{sec:latent}
Divergences on probability simplices, in particular, the Kullback-Leibler (KL) and $\alpha$-divergences, are the workhorses of machine learning. Using the absolutely-homogeneous convex potential for the $\sinh$-type cost, we obtain a local version of the $\alpha$-divergence using $\phi(x) = \sum_{i=1}^n|x|_i^{\frac{1}{\alpha}}$ in \cref{ex:homogenousgeq1}. \emph{To be consistent with the $\alpha$-divergence, we will substitute $\frac{1}{\alpha}$  in place of $\alpha$ in \cref{ex:homogenousgeq1}}. 
\begin{proposition}\label{prop:divergence} Assume $0 < \alpha < 1$ and let $\mu, \mu'$ be two probability measures on a finite set $\Omega$. For all subsets $\Omega_1\subset \Omega$, we have
\begin{equation}\label{eq:AlphaLoc}\alpha(\mu(\Omega_1) - \mu'(\Omega_1)) -\log\frac{\int_{\Omega_1}(\frac{d\mu}{d\mu'})^{\alpha}d\mu'+\left(
\left(\int_{\Omega_1}(\frac{d\mu}{d\mu'})^{\alpha}d\mu'\right)^2
    - \mu'(\Omega_1)^2+1\right)^{\frac{1}{2}}}{1+\mu'(\Omega_1)}\geq 0
\end{equation}
 with equality if and only if $\mu = \mu'$ on $\Omega_1$.
\end{proposition}
In the above $\frac{d\mu}{d\mu'}$ is the Radon-Nikodym derivative, which is the ratio of the probability densities in this finite case. We will see that the limit when $\alpha$ goes to infinity is a local version of the Kullback-Leibler divergence. We also expect to have extensions to more general probability spaces. The new divergences could be useful in learning how two probability measures differ on different subsets.

Assume $\tphi$ is homogeneous of order $\frac{1}{\alpha} > 1, 0 < \alpha < 1$. Then $\phi(x) = \tphi^{\bar{\bcc}\bcc}(x)$ is $\bcc$-convex and equals to $\tphi(x)$  inside $\tphi(x) \leq \frac{\alpha }{r}$. We call this closed subset $\bar{B}$, its interior $B$ and its boundary $\partial B$ is defined by $\phi(x)=\frac{\alpha}{r}$.

We need a condition on when $\fD$ in \cref{eq:divergence} is zero on $\bar{B}$. If both $x, x'$ are inside $B$ then $\fD(x, x')=0$ only if $x=x'$. Since  $\tphi(x) = \tphi^{\bar{\bcc}\bcc}(x)$ also on the boundary $\partial B$, we still have $\tphi(x)  + \bu(x^{\sfT}\bar{x}) + \tphi^{\bar{\bcc}}(\barx) \geq 0$ for $x\in \partial B$ and all $\barx\in \R^n$ but we cannot have equality for finite $\barx$. However, as $x'$ approaches the boundary $\partial B$, $\barx = \bT(x')$ approaches infinity and $\fD(x, x')$ could be close to $0$. This shows for $x\in \partial B$ and $x'\in \bar{B}$ then $\fD(x, x')$ could only be zero if $x'\in \partial B$. But if $x'\in \partial B$ then $\fD(x, x') = -\frac{1}{r}\log\max\{rx^{\sfT}\grad_{\phi}(x'), 0\}$. Again, using the strictly convex condition, $\phi(x) - \phi(x') -\grad_{\tphi}(x')^{\sfT}(x-x')\geq 0$ for $x, x'\in \partial B$, we have $\fD(x, x')=0$ on $\bar{B}$ if and only if $x=x'$.

\begin{proof}(of \cref{prop:divergence}) For $0 < \alpha < 1$, consider $\tphi(x) =  \sum_{i=1}^n |x_i|^{\frac{1}{\alpha}}$, which is $\bcc$-convex inside the closed ball $\bar{B}$ defined by $\sum_{i=1}^n|x|_{i}^{\frac{1}{\alpha}} \leq \frac{\alpha}{r}$. Consider the probability simplex $\sum_{p\in \Omega} p = 1$ with $p \geq 0$ where $\Omega$ is a finite set containing $\Omega_1$ with $|\Omega_1| = n$. For two probability measures $\mu$ and $\mu'$ on $\Omega$ with corresponding distributions $\ptt=(p)_{p\in\Omega}$ and $\ptt'=(p')_{p'\in\Omega}$, set $x = (\frac{\alpha}{ r}p)^{\alpha}_{p\in\Omega_1}, x'=(\frac{\alpha}{ r}p')^{\alpha}_{p'\in\Omega_1}$ as elements of $\R^n$, then $x$ and $x'$ are in $\bar{B}$. We have $\phi(x) = \frac{\alpha }{r}\mu(\Omega_1), \phi(x') =\frac{\alpha}{r}\mu'(\Omega_1)$ and
  $$x^{\sfT}\grad_{\phi}(x') = \frac{1}{\alpha}\sum_i (\frac{\alpha}{r}p_i)^{\alpha} ((\frac{\alpha}{r}p'_i)^{\alpha} )^{\frac{1}{\alpha}-1} =
  \frac{1}{r}\int_{\Omega_1} (\frac{d\mu}{d\mu'})^{\alpha}d\mu'.$$
A substitution to \cref{eq:divergence} gives us \cref{eq:AlphaLoc}. The argument preceeding the proof shows we have equality if and only if $\mu = \mu'$ on $\Omega_1$.
\end{proof}
A corollary is if $\Omega = \Omega_1\cup\Omega_2\cdots \cup\Omega_k$ is a partition of $\Omega$. Then for $0 < \alpha < 1$
  \begin{equation} \prod_{i=1}^k\frac{\int_{\Omega_i}(\frac{d\mu}{d\mu'})^{\alpha}d\mu'+\left(
\left(\int_{\Omega_i}(\frac{d\mu}{d\mu'})^{\alpha}d\mu'\right)^2
    - \mu'(\Omega_i)^2+1\right)^{\frac{1}{2}}}{1+\mu'(\Omega_i)} \leq 1
  \end{equation}
with equality only if $\mu=\mu'$. This follows from \cref{prop:divergence} by summing \cref{eq:AlphaLoc} over all sets in the partition, noting $\mu(\Omega) = \mu'(\Omega) = 1$.

When $\Omega_1 = \Omega$, \cref{eq:AlphaLoc} becomes $-\log \int_{\Omega_1}(\frac{d\mu}{d\mu'})^{\alpha}d\mu' \geq 0$, equivalent to the $\alpha$-divergence \cite{WongInfo} $\frac{1}{\alpha-1}\log \int_{\Omega_1}(\frac{d\mu}{d\mu'})^{\alpha}d\mu'\geq0$. We can also divide \cref{eq:AlphaLoc} by $\alpha$ then take the limit when $\alpha$ goes to $0$ for a local form of the KL divergence.

%\subsection{Local divergence and latent states}\label{sec:latent}
Results in this section are in \href{https://github.com/dnguyend/regularMTW/blob/main/colab/HyperbolicDualGraphAndDivergence.ipynb}{colab{/}HyperbolicDualGraphAndDivergence{.}ipynb} in \cite{NguyenMTWGitHub}. Assuming we sample from a space $\Omega$, where samples from a subset $\Omega_0$ are unreliable, we know when a sample is in $\Omega_0$, but we do not know its value with certainty. However, if a sample is in the complement $\Omega_1$ then we can trust the sample value. Alternatively, we can think of a situation where we observe a system with a latent state $S\in\Omega$ which is only visible when $S\in\Omega_1$, but not visible when $S\in\Omega_0=\Omega-\Omega_1$. If we define a local divergence as a kind of distance between probability measures on $\Omega$, that is zero if they are identical on $\Omega_1$, then local divergence is a notion of distance on probability measures on a latent space, given observable data. This measure could be useful for state-space models in control theory and machine learning.

A simple construction is to consider $\Omega_* = \{S_*\}\cup \Omega_1$ for a dummy element $S_*$, and for any measure $\mu$ on $\Omega$, consider the measure $\mu_*$ on $\Omega^*$, with $\mu_*(S_*) = \mu(\Omega_0)$, while $\mu_*(A) = \mu(A)$ for any subset $A\in\Omega_1$. It is clear this defines a unique measure $\mu_*$ on $\Omega_*$, thus, given a divergence, for example, an $\alpha$-divergence $D_{\alpha}$, with $0 < \alpha < 1$, we can define a local divergence (which we call the blank local divergence)
\begin{equation}D_{\alpha,\flat}(\mu || \mu') := \alpha(1-\alpha)D_{\alpha}(\mu_* ||\mu'_*)
  =-\log \left((1-\mu(\Omega_1))^{\alpha}(1-\mu'(\Omega_1))^{1-\alpha} + \int_{\Omega_1} (\frac{d\mu}{d\mu'})^{\alpha}d\mu' \right).
\end{equation}  
For another example, take a convex function $f$, then $\frac{1}{\mu'(\Omega_1)}d\mu'$ is a probability measure on $\Omega_1$ if $\mu(\Omega_1)\neq 0$. By Jensen's inequality, $\int_{\Omega_1} f(\frac{d\mu}{d\mu'})\frac{d\mu'}{\mu'(\Omega_1)}\geq  f(\int_{\Omega_1} \frac{d\mu}{d\mu'}\frac{d\mu'}{\mu'(\Omega_1)})$, or
\begin{equation}\int_{\Omega_1} f(\frac{d\mu}{d\mu'})d\mu'\geq  \mu'(\Omega_1)f( \frac{\mu(\Omega_1)}{\mu'(\Omega_1)}).\end{equation}
For $f(x) = -x^{\alpha}$ for $0 <\alpha < 1$, this give us $\int_{\Omega_1} (\frac{d\mu}{d\mu'})^{\alpha}d\mu' \geq \mu(\Omega_1)^{\alpha}\mu'(\Omega_1)^{1-\alpha}$ and the local $(\alpha,f)$ divergence
\begin{equation}D_{\alpha, f}(\mu ||\mu') := \alpha\log\mu(\Omega_1) + (1-\alpha)\log\mu'(\Omega_1) - \log\int_{\Omega_1} (\frac{d\mu}{d\mu'})^{\alpha}d\mu' \geq 0.
\end{equation}
Finally \cref{eq:AlphaLoc} is another local divergence. All three local divergences reduce to $- \log\int_{\Omega} (\frac{d\mu}{d\mu'})^{\alpha}d\mu'$ when $\Omega_1=\Omega$. Numerically, we find the hyperbolic-$\alpha$ local divergence of \cref{eq:AlphaLoc} is the tightest. In our random test, it is always not larger than $D_{\alpha,\flat}$, which we conjecture to always hold. If we draw normally distributed weights for $\Omega$, for only $18$ percent of the time $D_{\alpha, f}$ is smaller than the divergence in \cref{eq:AlphaLoc}.

\section{Conclusion} In this article, we found a family of zero MTW tensors on an Euclidean space and new families of positive MTW tensors on the hyperbolic space and the sphere. The result links the Euclidean and log-type cost functions as part of a larger family. We also analyze the geometry of the $\bcc$-divergence using $\bcc$-convex functions with a focus on the $\sinh$-type hyperbolic family. The linkage between optimal transport and information geometry gives a new family of generalized hyperbolic analogs of the Bregman divergence, which we expect to be useful in statistics, learning, and optimization. In particular, we show that mirror sampling with non-classical costs is promising, and we expect the $\bcc$-mirror Langevin sampling method to complement the existing mirror Langevin sampling framework, while the hyperboloid costs will be useful in matching problems in hyperbolic embedding. We also obtain a new divergence-type inequality on probability spaces. In \cite{NguyenGeoglobal}, we show a family of $\log$-type costs constructed from the polar decomposition also satisfying A3w on the space of fixed-rank matrices. This suggests pairings between points in source and target domains could generate many families of interesting regular costs.

%\bmhead{Supplementary information}If your article has accompanying supplementary file/s please state so here. 
\section{Acknowledgements}
The author would like to thank the reviewer(s) and the handling editor for their careful reading and constructive suggestions that helped improve the article's quality significantly. In particular, he is grateful for the suggestions to include \cref{tbl:notations} and to express the cost function explicitly in \cref{theo:Hn1}. He wishes to thank his family for their loving support in this project.

\begin{appendices}

\section{Proof of \cref{prop:power}}\label{appx:proofpower}
\begin{proof}For $\rS^n$, we show the coefficients are given by
  \begin{equation}
    \begin{gathered}
    R_1 = \frac{(s + 1)^{\frac{1}{\alpha}}f_0}{
      \alpha^4(s + 1)^2(s + \alpha - 1)} \\
    \text{ with } f_0 = 
      2(1 - \alpha)s^3
        + (\alpha^3 - 4\alpha^2 + 9\alpha - 6)s^2
        + (\alpha^3 + 6\alpha^2 - 12\alpha + 6)s\\
        + (\alpha^4 - \alpha^3 - 2\alpha^2 + 5\alpha - 2),\\
    R_{23} = \frac{(s + 1)^{\frac{1}{\alpha}}((2\alpha - 1)s + \alpha^2 - \alpha + 1)}{\alpha^2(s + 1)^2(s + \alpha - 1)}, \\    
    R_4 = \frac{(s+\alpha)(s + 1)^{\frac{1}{\alpha}}}{\alpha(s + 1)^2(s+\alpha-1)}.
    \end{gathered}
\end{equation}    
  We have $\bs_i(u) = (-1)^i(-u)^{\alpha-i}\prod_{j=0}^{i-1}(\alpha-j)=\prod_{j=0}^{i-1}(\alpha-j)(s+1)u^{-i}$ for $i=1\cdots 4$ since $(-u)^{\alpha-i} = (s+1)(-u)^{-i}$ and $u\leq 0$. Thus,
    $$\begin{gathered}s_1^2-s_2s = \alpha^2(s+1)^2u^{-2} -\alpha(\alpha-1)(s+1)su^{-2} = \alpha u^{-2}(s+1)(s+\alpha),\\
      -ss_1^2 - s_2(1-s^2) = -s_2 - s(s_1^2-ss_2) = - \alpha(\alpha-1)(s+1)u^{-2} - \alpha u^{-2}(s+1)(s+\alpha)s \\
         = - \alpha(s+1)^2u^{-2}(s + \alpha-1),\\
s_1s_3 - s_2^2 =   \alpha (s+1)u^{-1}\alpha(\alpha-1)(\alpha-2)(s+1)u^{-3} - \alpha^2(\alpha-1)^2(s+1)^2u^{-4}\\
    = - \alpha^2(\alpha-1)(s+1)^2u^{-4},\\
(s_1^2 - ss_2)s_4 + ss_3^2 - 2s_1s_2s_3 + s_2^3 \\
      =\alpha u^{-2}(s+1)(s+\alpha)\alpha(\alpha-1)(\alpha-2)(\alpha-3)(s+1)u^{-4} \
        + s\alpha^2(\alpha-1)^2(\alpha-2)^2(s+1)^2u^{-6} \\
        - 2\alpha^3(\alpha-1)^2(\alpha-2)(s+1)^3u^{-6}
        + \alpha^3(\alpha-1)^3(s+1)^3u^{-6}\\
     = \alpha^2(\alpha-1)u^{-6}(s+1)^2(2s - \alpha^2 + 3\alpha)
    \end{gathered}
    $$
where between the second and last equal signs, after factorizing $\alpha^2(\alpha-1)u^{-6}(s+1)^2$, we get an expression of the form $a_1s+a_0$, and the coefficients $a_1$ and $a_0$ are
$$\begin{gathered}a_1  = (\alpha-2)(\alpha-3) \
            + (\alpha-1)(\alpha-2)^2 
            - 2\alpha(\alpha-1)(\alpha-2)
            + \alpha(\alpha-1)^2 =2,\\
        a_0 =\alpha (\alpha-2)(\alpha-3) -2\alpha(\alpha-1)(\alpha-2) +\alpha(\alpha-1)^2= 
        \alpha(-\alpha+3).\end{gathered}$$
The following gives us the numerator for $R_{23}$, then the numerator for $R_1$
$$\begin{gathered}(s_3s_1-s_2^2)(1-s^2) + s_1^2(s_1^2-s_2s) \\
  = - \alpha^2(\alpha-1)(s+1)^2u^{-4}(1-s^2) + \alpha^2(s+1)^2u^{-2}(\alpha u^{-2}(s+1)(s+\alpha))\\
  = \alpha^2(s+1)^3u^{-4}((2\alpha-1)s + \alpha^2 - \alpha + 1),
\end{gathered}$$
$$\begin{gathered}
  \text{Num. of } R_1 = \alpha^2(\alpha-1)u^{-6}(s+1)^2(2s - \alpha^2 + 3\alpha)(- \alpha(s+1)^2u^{-2}(s+ \alpha-1))(1-s^2)^2 \\
  + \alpha^4(s+1)^6u^{-8}((2\alpha-1)s + \alpha^2 - \alpha + 1)^2\\
=  \alpha^3(s+1)^6u^{-8}(
        -(\alpha-1)(2s - \alpha^2 + 3\alpha)(s+\alpha-1)(s-1)^2 \
        + \alpha((2\alpha-1)s + \alpha^2 - \alpha + 1)^2)  \\
        = \alpha^3(s+1)^6u^{-8}(s+\alpha) \\
        \times (2(1 - \alpha)s^3 + (\alpha^3 - 4\alpha^2 + 9\alpha - 6)s^2         + (\alpha^3 + 6\alpha^2 - 12\alpha + 6)s + \alpha^4 - \alpha^3 - 2\alpha^2 + 5\alpha - 2).
\end{gathered}$$
With $D = -\alpha^7(s+1)^8u^{-9}(s+\alpha)(s+\alpha-1)$, the formulas for $R_1, R_{23}, R_4$ follow.

It is clear that $D$, hence $R_4$ are positive if $s > -1$, $\alpha \geq 2$. For $R_1$, to show $f_0(s) > 0$, set $\alpha = b+2$ with $b\geq 0$, and rearange $f_0(s)$ as a polynomial in b:
$$ b^4 + (s^2 + s + 7)b^3 + (2s^2 + 12s + 16)b^2 + (s + 1)(-2s^2 + 7s + 17)b+ 2(-s + 4)(s + 1)^2.$$
All coefficients are positive for $s\in (-1, 1]$, thus, $f_0(s)$ is positive. For $R_{23}$, note
  $$f_1(s) := (2\alpha - 1)s + \alpha^2 - \alpha + 1 > f_1(-1) = (\alpha-1)(\alpha-2) \geq 0.$$
  For $\rH^n$, let $\alpha = \frac{1}{\beta}$, then $\bs(u) = - (-u)^{\alpha}$, we have $s_i = \prod_{j=0}^{i-1}(\alpha-j)su^{-i}$, thus,
  $$\begin{gathered}s_1^2-s_2s= \alpha^2s^2u^{-2} - \alpha(\alpha-1)s^2u^{-2}=\alpha s^2u^{-2} > 0,\\
    -s_2 - s(s_1^2-ss_2) =-\alpha(\alpha-1)su^{-2} -s\alpha s^2u^{-2}  = - \alpha s(s^2-1+\alpha)u^{-2}> 0,\\
    s_1s_3 - s_2^2 = \alpha^2(\alpha-1)(\alpha-2)s^2u^{-4} - \alpha^2(\alpha-1)^2s^2u^{-4}=\alpha^2(1-\alpha)s^2u^{-4}\geq0,
  \end{gathered}$$
$$\begin{gathered}  
    (s_1^2 - ss_2)s_4 + ss_3^2 - 2s_1s_2s_3 + s_2^3= \alpha^2(\alpha-1)(\alpha-2)(\alpha-3)s^3u^{-6} \\ +\alpha^2(\alpha-1)^2(\alpha-2)^2s^3u^{-6}-2\alpha^3(\alpha-1)^2(\alpha-2)s^3u^{-6} +\alpha^3(\alpha-1)^3s^3u^{-6}\\
    =-2\alpha^2(1-\alpha)s^3u^{-6}\geq 0,\\
    (s_3s_1-s_2^2)(1-s^2) + s_1^2(s_1^2-s_2s)= \alpha^2(1-\alpha)s^2u^{-4}(1-s^2)
    +\alpha^2s^2u^{-2}\alpha s^2u^{-2}  \\
    =\alpha^2u^{-4}s^2((2\alpha-1)s^2 +1 -\alpha) > 0.
\end{gathered}    
  $$
  Since $s_1 = \alpha su^{-1}>0$, the above shows $D>0, R_4>0, R_{23}>0, R_1>0$.
\end{proof}
%%=============================================================%%
%% Sample for another appendix section			       %%
%%=============================================================%%

%% \section{Example of another appendix section}\label{secA2}%
%% Appendices may be used for helpful, supporting or essential material that would otherwise 
%% clutter, break up or be distracting to the text. Appendices can consist of sections, figures, 
%% tables and equations etc.
\section{Monotonic ranges of functions in \cref{theo:sol}}\label{rem:Solrange}    
%  \begin{remark}
  We are interested in maximum ranges of $\bs$ and $\bu$ where they are monotonic. Since all the solutions of \cref{eq:zeroODE} are entire functions, a range for $u$ is bounded between values of maximum and minimum points $u_c$'s (roots of $\bs_1$), or $\pm\infty$. Hence, the domain $I_u$ of $\bu$ (range of $\bs$) is bounded between $\bs(-\infty), \bs(\infty)$ or
  $\bs(u_c)$, ordering by their relative values. Thus, there are one, two, or infinite such intervals depending on the number of critical points (zero, one, or infinite).

In this paper, we use the terms generalized hyperbolic and inverse generalized hyperbolic for functions of the form $p_0e^{p_1u} + p_2e^{p_3u}$ and their inverses, note the same terms may denote a different family of functions in the literature. The function $\bu$ needs to be solved numerically in the general case, however, when $p_1 = -p_3$ or $p_3=2p_1 > 0$ or $p_1 = 2p_3 < 0$, $\bu$ could be expressed in terms of $\log$ and square root.

Recall the Lambert functions $W_0$ and $W_{-1}$, available in many numerical software packages, are two branches of inverses of $w\mapsto we^w=x$, with $W_{-1}$ corresponding to the branch $ w \leq -1, -e^{-1} \leq x < 0$; $W_0$ corresponds to $ -1 \leq w, -e^{-1} \leq x$. Solutions of $(a_0 + a_1u)e^{a_2u}=s$ are $u = \frac{1}{a_2}W(\frac{a_2\exp{\frac{a_0a_2}{a_1}}}{a_1}s) - \frac{a_0}{a_1}$, where $W$ is either $W_0$ or $W_{-1}$.

Here are the branches of $\bu$ corresponding to the costs in \cref{theo:sol}
\begin{enumerate}
  \item{Generalized hyperbolic: \cref{eq:Sol1}}: $\bs(u) = p_0e^{p_1u} + p_2e^{p_3u}$, $p_3 > p_1$, $p_0p_2\neq 0$.
    \begin{enumerate}
    \item{Antenna-like} $p_0p_1p_2p_3 \geq 0, p_1p_3\geq 0$. There is {\it no critical point}, $\bs$ is monotonic.
      \begin{enumerate}
      \item $p_3 > p_1=0, \bs= p_0 + p_2e^{p_3u}$: $\bs(-\infty) = p_0$, $\bs(\infty) = p_2\infty$.
        \item $p_3 > p_1 > 0$, $\bs(-\infty) = 0, \bs(+\infty) = p_2\infty$.
        \item $0=p_3 > p_1, \bs= p_0e^{p_1u} + p_2$: $\bs(-\infty) = p_0\infty$, $\bs(\infty) = p_2$.
          \item $0 > p_3 > p_1$, $\bs(-\infty) = p_0\infty, \bs(\infty) = 0$.
        \end{enumerate}
    \item{$\sinh$-like:} $p_0p_1p_2p_3 > 0, p_3 > 0 > p_1$, $p_0p_2<0$. {\it No critical point}, $\bs(-\infty) = p_0\infty, \bs(\infty) = -p_0\infty=p_2\infty$. $I_u=\R$.
    \item{$p_0p_1p_2p_3 < 0$:} $u_c = \frac{1}{p_1-p_3}\log\frac{-p_2p_3}{p_0p_1}$, $\bs(u_c) = \frac{p_2(p_1-p_3)}{p_1}(\frac{p_0p_1}{-p_2p_3})^{\frac{p_3}{p3-p1}}$.\label{itm:crGH}
      \begin{enumerate}
      \item{One decaying arm:} $p_3  p_1 > 0, p_0p_2 < 0$, $\bs(-p_1\infty) = 0, \bs(p_1\infty) = p_2\infty$. Two branches with $s$-values separated by $\bs(u_c)$, one to $0$ and the other to $p_2\infty$.
      \item{$\cosh$-like:} $p_3 > 0 > p_1, p_0p_2 > 0$, $\bs(-\infty) = p_0\infty, \bs(+\infty) = p_2\infty$ of the same sign. Two branches with $s$-values enclosed between $\bs(u_c)$ and $p_0\infty$.
        \end{enumerate}
    \end{enumerate}
\item{Affine}: in \cref{eq:Sol3}, $a_2 =0$. {\it No critical point}, the classical affine case (\cite{Brenier}).    
  \item{Lambert:} in \cref{eq:Sol3}, $a_2\neq0$, $u_c = -\frac{a_0a_2+a_1}{a_1a_2}, \bs(u_c)=-\frac{a_1}{a_2}e^{-a_2u_c}$. Values at infinitive are $\bs(a_2\infty) = (\sign{a_1a_2})\infty$, $\bs(-a_2\infty)=0$.
  \item{Exponential-trigonometric: \cref{eq:Sol2}} $u_c$ solves $b_1\sin(b_2u + b_3) + b_2\cos(b_2u + b_3)=0$. There are infinitely many such solutions, the interval $I_u$ could be taken to be a monotonic segment between these solutions, or $I_u$ has length $\frac{\pi}{|b_2|}$ and starts at $\frac{1}{b_2}(\tan^{-1}(-\frac{b_2}{b_1})-b_3 + k\pi)$ for some $k\in \Z$.    
\end{enumerate}
\section{Absolutely homogeneous function of order 1 with hyperbolic cost}\label{sec:convexOrder1}
We consider the same hyperbolic cost as in \cref{ex:homogenousgeq1} with order $\alpha=1$, including the norm function $|x|_{p}$ for $p > 1$. Assume $\tphi$ is convex and absolutely-homogeneous of order $1$. The treatment for order $\alpha > 1$ needs some modifications, the main reason is now, $\grad_{\tphi}(x)$ may be constrained to a hypersurface. To see this, assume there is an exponent $p>1$ such that $F(x) = \tphi^p$ has invertible gradient with the smoothness assumption as in \cref{ex:homogenousgeq1}. We have $\grad_{\tphi}(x) = \frac{1}{p}F(x)^{\frac{1-p}{p}}\grad_F(x)$, thus, $z=\grad_{\tphi}(x)$ satisfies
  $\grad_F^{-1}(pz) = F(x)^{-\frac{1}{p}}x$, or $F(\grad_F^{-1}(p z)) = 1$. In particular, this holds for $F(x) = |x|_p^p$.

The index $p$ such that $\grad_{\tphi^p}$ is invertible is not unique. The following result seems dependent on $p$, but since $\tphi^{\bar{\bcc}\bcc}$ is not dependent on $p$, $\tphi^{\bar{\bcc}}$ is also not dependent on $p$.
  \begin{proposition}Let $\tphi(x)$ be an absolutely-homogeneous, convex function of order $1$ and assume $F: x\mapsto \tphi(x)^p$  ($F$ is convex and absolutely-homogeneous of order $p$) satisfies the smoothness and invertibility of $\grad_F$ as in \cref{ex:homogenousgeq1}. Then
    \begin{equation}\tphi^{\bar{\bcc}}(\barx) =\begin{cases}-\bu(x_{\opt, \barx}^{\sfT}\barx) - \tphi(x_{\opt, \barx}) \text{ if } F(\grad_F^{-1}(\barx)) \geq (\frac{2r(-p_0p_2)^{\frac{1}{2}}}{p})^{\frac{p}{p-1}},\\
 -\bu(0)\text{ otherwise.}
\end{cases}\label{eq:phicorder1}
    \end{equation}
% if $F(\grad_F^{-1}(y)) \geq (\frac{2r(-p_0p_2)^{\frac{1}{2}}}{p})^{\frac{p}{p-1}}$    
where in the first case,  $x_{\opt, \barx}= \frac{(4r^2p_0p_2p^{-2}F(\grad_F^{-1}(\barx))^{2/p-2} + 1)^{\frac{1}{2}}}{rF(\grad_F^{-1}(\barx))^{\frac{1}{p}}}\grad_F^{-1}(\barx)$; and
\begin{equation}\tphi^{\bar{\bcc}\bcc}(x) = \sup_{\barx}\{-\bu(x^{\sfT}\barx)-\tphi^{\bar{\bcc}}(\barx)\}=\begin{cases}  \tphi(x) \text{ if }\tphi(x) <\frac{1}{r},\\
\frac{1}{r}(\log(r\tphi(x)) + 1) \text{ otherwise}.
\end{cases}
\end{equation}
  \end{proposition}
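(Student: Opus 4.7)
The plan is to adapt the proof of Proposition~\ref{prop:homealphag1} to the degree-one homogeneous case, with the key modifications arising from the fact that $\grad_{\tphi}$ is now homogeneous of degree zero (hence constant along rays), while $\grad_F = p\tphi^{p-1}\grad_{\tphi}$ retains the invertibility needed to parametrize critical points.

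First I would analyze $\inf_x L(x,y)$ with $L(x,y) = \bu(x^{\sfT}y) + \tphi(x)$ by level-set decomposition. On $\tphi(x) = \Delta > 0$, maximizing $x^{\sfT}y$ (since $\bu$ is decreasing) via a Lagrange multiplier gives $y = \lambda\grad_{\tphi}(x)$; using the homogeneity of $\grad_F^{-1}$ and the constraint $F(x) = \Delta^p$, I would solve for $\lambda$ and find $\max_{\tphi(x)=\Delta} x^{\sfT}y = K\Delta$ where $K := pF(\grad_F^{-1}(y))^{(p-1)/p}$, using Euler's identity $x^{\sfT}\grad_{\tphi}(x) = \tphi(x)$. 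The univariate minimum of $L_{\opt}(\Delta) := \bu(K\Delta)+\Delta$ over $\Delta>0$ exists as an interior critical point iff $K \geq 2r(-p_0p_2)^{1/2}$, which is exactly the stated threshold. In the interior case, $\Delta_{\opt}^2 = 1/r^2 + 4p_0p_2/K^2$ and back-substitution recovers the claimed expression for $x_{\opt,y}$. Below the threshold, $L_{\opt}$ is strictly increasing on $(0,\infty)$ so the infimum is attained at $x=0$ with value $\bu(0)$, giving the second branch of the formula for $\tphi^{\bcc}$.

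For $\tphi^{\bcc\bcc}$, the universal bound $L(x,y) \geq -\tphi^{\bcc}(y)$ yields $\tphi^{\bcc\bcc}(x) \leq \tphi(x)$. When $\tphi(x) < 1/r$, equality is achieved by choosing $y$ proportional to $\grad_{\tphi}(x)$ so that $x_{\opt,y} = x$. For $\tphi(x) \geq 1/r$, I would parametrize candidate $y$'s by $z = x_{\opt,y}$ with $\tphi(z) = \Delta \in (0, 1/r)$. Convexity of $\tphi$ combined with Euler's identity gives $x^{\sfT}\grad_{\tphi}(z) \leq \tphi(x)$, with equality at $z = (\Delta/\tphi(x))x$, so the supremum over directions of $y$ at fixed $\Delta$ simplifies to $s_x = 2r(-p_0p_2)^{1/2}\tphi(x)(1-r^2\Delta^2)^{-1/2}$ and $s_z = K\Delta$ after $K$ reduces to $2r(-p_0p_2)^{1/2}(1-r^2\Delta^2)^{-1/2}$. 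Finally, using the explicit logarithmic form of $\bu$ from \cref{eq:sinhtypedef}, I would let $\Delta \to 1/r^{-}$: the asymptotics give $-\bu(s_x)+\bu(s_z) \to (1/r)\log(r\tphi(x))$, and adding the $\Delta \to 1/r$ term yields $(1/r)(\log(r\tphi(x))+1)$, recovering the second branch.

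The main obstacle will be the limiting argument in the boundary regime $\tphi(x) \geq 1/r$: verifying rigorously that the supremum is the limiting value rather than attained at any finite $y$, and that the reduction ``maximize over direction of $y$ first, then send $\Delta \to 1/r$'' is justified (note this relies only on convexity plus order-one homogeneity, not strict convexity, so equality $x^{\sfT}\grad_{\tphi}(z) = \tphi(x)$ at $z \propto x$ is structurally robust). A secondary concern is confirming that although the intermediate formulas involve $p$, the resulting $\tphi^{\bcc}$ is $p$-independent, consistent with the remark in the proposition that the choice of exponent making $\grad_F$ invertible is immaterial for $\tphi^{\bcc\bcc}$.
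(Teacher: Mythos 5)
Your outline follows essentially the same route as the paper's proof: reduce $\inf_x L(x,y)$ to a one-variable problem over the level value $\Delta$ via a Lagrange-multiplier/homogeneity argument (with $\max_{\tphi(x)=\Delta}x^{\sfT}y=K\Delta$, $K=pF(\grad_F^{-1}(y))^{(p-1)/p}$, exactly as in the paper), read off the threshold $K>2r(-p_0p_2)^{1/2}$, solve $\Delta_{\opt}^2=1/r^2+4p_0p_2/K^2$ to recover $x_{\opt,y}$, and for the double transform use the Young-type inequality plus attainment when $\tphi(x)<1/r$ and the $z_\Delta=(\Delta/\tphi(x))x$ maximization with the limit $\Delta\to 1/r^{-}$ when $\tphi(x)\geq 1/r$. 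All of these computations check out and reproduce the stated formulas (at the borderline $K=2r(-p_0p_2)^{1/2}$ the critical point degenerates to $\Delta=0$ and both branches of \cref{eq:phicorder1} agree, so your ``$\geq$'' versus the paper's strict inequality is immaterial).

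One concrete step is missing from your treatment of the regime $\tphi(x)\geq 1/r$: the supremum defining $\tphi^{\bcc\bcc}(x)$ runs over \emph{all} $y$, but your change of variables $z=x_{\opt,y}$ only parametrizes the $y$'s in the first branch of \cref{eq:phicorder1}; for $y$ with $F(\grad_F^{-1}(y))$ below the threshold one has $\tphi^{\bcc}(y)=-\bu(0)$ and no corresponding $z$ with $\tphi(z)\in(0,1/r)$, so the quantity $-\bu(x^{\sfT}y)+\bu(0)$ over that region must be bounded separately. The paper does this explicitly: since $-\bu$ is increasing, that partial supremum is attained at the $y$ positively proportional to $x$ on the threshold hypersurface, and this boundary point corresponds to $x_{\opt,y}=0$, hence is absorbed into the $z$-parametrized supremum; without some such argument your case analysis is incomplete. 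As for the issue you flag as the ``main obstacle'' (that the supremum in the second regime is a limiting value as $\Delta\to 1/r$ rather than attained at finite $y$), the paper's proof handles it at the same level of rigor you propose — it computes $S(\Delta)$ and identifies the limit, as in the order-$\alpha>1$ case where it remarks the supremum is not interior — so your concern is legitimate but does not mark a divergence from the paper's argument.
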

  \begin{proof}For $\Delta > 0$, consider the hypersurface $F(x) = \Delta^p$, where $L(x, \barx)=\bu(x^{\sfT}\barx) +\tphi(x)$ restricts to $\bu(x^{\sfT}\barx) + \Delta$. Using Lagrange multipliers, the minimum point $x_{\opt,\Delta}$ is proprotional to $\grad_F(\barx)$, hence $x_{\opt,\Delta} = \frac{\Delta}{F(\grad_F^{-1}(\barx))^{\frac{1}{p}}}\grad_F^{-1}(\barx)$, similar to \cref{ex:homogenousgeq1}. Consider $f(\Delta) := \bu(x_{\opt,\Delta}^{\sfT}\barx) + \tphi(x_{\opt,\Delta})=\bu(pF(\grad_F^{-1}(\barx))^{\frac{p-1}{p}}\Delta) + \Delta $, with
    $$f'(\Delta) = -\frac{K}{r(\Delta^2K^2-4p_0p_2)^{\frac{1}{2}}} + 1,\quad K=
pF(\grad_F^{-1}(\barx))^{\frac{p-1}{p}}.
$$
Thus, $f'$ is increasing with $\Delta$. When $K^2 + 4r^2p_0p_2 =
p^2F(\grad_F^{-1}(\barx))^{\frac{2p-2}{p}}+4r^2p_0p_2>0$, then $f'(\Delta) = 0$ has one root corresponding to the global minimum of $L(., \barx)$, with the global $x_{\opt}$ in the proposition. Otherwise, $f'$ is positive in the region $\Delta \geq 0$, and the infimum of $f$ is at $\Delta=0$, or infimum of $L(., \barx)$ is at $x=0$. We thus obtain the expression for $\tphi^{\bar{\bcc}}$.

As before, $\bu(x^{\sfT}\barx) + \tphi(x) +\tphi^{\bar{\bcc}}(\barx) \geq 0$ for all $x, \barx$, thus $\tphi(x)\geq \sup_{\barx}\{-\bu(x^{\sfT}\barx) - \tphi^{\bar{\bcc}}(\barx)\}$. When $\tphi(x) < \frac{1}{r}$, then $\barx_{\opt, x} :=\frac{2r(-p_0p_2)^{\frac{1}{2}}}{(1-r^2\tphi(x))^2)^{\frac{1}{2}}}\grad_{\tphi}(x)$ satisfies
$$\begin{gathered}F(\grad_F^{-1}(\barx_{\opt, x})) =
\left(\frac{2r(-p_0p_2)^{\frac{1}{2}}}{(1-r^2\tphi(x)^2)^{\frac{1}{2}}}\right)^{\frac{p}{p-1}}F(\grad_F^{-1}(\grad_{\tphi}(x)))\\
=\left(\frac{2r(-p_0p_2)^{\frac{1}{2}}}{p(1-r^2\tphi(x)^2)^{\frac{1}{2}}}\right)^{\frac{p}{p-1}}F(x)^{-1}F(x) \geq \left(\frac{2r(-p_0p_2)^{\frac{1}{2}}}{p}\right)^{\frac{p}{p-1}}.
\end{gathered}$$
Thus, $\tphi^{\bar{\bcc}}(\barx_{\opt, x})$ is given by the first case of \cref{eq:phicorder1}, and we verify $x\mapsto \barx_{\opt,x}$ and $\barx\mapsto x_{\opt, \barx}$ are inverse maps between the regions $0 < \tphi(x) < \frac{1}{r}$ and $F(\grad_F^{-1}(\barx)) > (\frac{2r(-p_0p_2)^{\frac{1}{2}}}{p})^{\frac{p}{p-1}}$. In this case, the supremum is attainable, and $\tphi^{\bar{\bcc}\bcc}(x) = \tphi(x)$. When $\tphi(x) = 0$ then $x=0$ and the supremum is also attainable using any $\barx$ with $F(\grad_F^{-1}(\barx)) \leq (\frac{2r(-p_0p_2)^{\frac{1}{2}}}{p})^{\frac{p}{p-1}}$. 

Now assume $x^{\sfT}\grad_{\tphi}(x) \geq \frac{1}{r}$. Set $z=x_{\opt, \barx}$ then $\tphi^{\bar{\bcc}\bcc}(x)  =\max\{S_1, S_2\}$ with
\begin{align*}
S_1 &=  \sup_{\barx}\{-\bu(x^{\sfT}\barx)+\bu(0)\;|\; F(\grad_F^{-1}(\barx)) \leq (\frac{2r(-p_0p_2)^{\frac{1}{2}}}{p})^{\frac{p}{p-1}}\},\\
S_2 &=  \sup_{z: 0< \tphi(z) < \frac{1}{r} }\{-\bu(\frac{2r(-p_0p_2)^{\frac{1}{2}}}{(1-r^2\tphi(z)^2)^{\frac{1}{2}}}x^{\sfT}\grad_{\tphi}(z))
+ \bu(\frac{2r(-p_0p_2)^{\frac{1}{2}}}{(1-r^2\tphi(z)^2)^{\frac{1}{2}}}\tphi(z)) + \tphi(z) \}.
\end{align*}
We will see $S_1$ corresponds to the expression $S(0)$ below when we examine $S_2$, so let us start with $S_2$. We can proceed as before, on the hypersurface $\tphi(z) = \Delta$, or $F(z) = \Delta^p$, we want to maximize $x^{\sfT}\grad_{\tphi}(z)$. Set $z_{\Delta} = \frac{\Delta}{\tphi(x)}x$ in the hypersurface, then
$\tphi(z_{\Delta}) - \tphi(z) -(z_{\Delta} - z)^{\sfT}\grad_{\tphi}(z)>0$ unless $z=z_{\Delta}$ by convexity. This means $z_{\Delta}^{\sfT}\grad_{\tphi}(z) \leq z^{\sfT}\grad_{\tphi}(z)=\Delta$. Thus, 
$$x^{\sfT}\grad_{\tphi}(z) = \frac{\tphi(x)}{\Delta}z_{\Delta}^{\sfT}\grad_{\tphi}(z) \leq \frac{\tphi(x)}{\Delta}\Delta = \tphi(x),$$
and $x^{\sfT}\grad_{\tphi}(z)$ is maximized at $z=z_{\Delta}$. Hence. we  need to find the supremum of
\begin{equation}S(\Delta) = -\bu(\frac{2r(-p_0p_2)^{\frac{1}{2}}}{(1-r^2\Delta^2)^{\frac{1}{2}}}\tphi(x))
+ \bu(\frac{2r(-p_0p_2)^{\frac{1}{2}}}{(1-r^2\Delta^2)^{\frac{1}{2}}}\Delta) + \Delta \label{eq:SDelta}
\end{equation}
for $0 < \Delta \leq \frac{1}{r}$. Let $s_x =\frac{2r(-p_0p_2)^{\frac{1}{2}}}{(1-r^2\Delta^2)^{\frac{1}{2}}}\tphi(x)$, $s_z = \frac{2r(-p_0p_2)^{\frac{1}{2}}}{(1-r^2\Delta^2)^{\frac{1}{2}}}\Delta$, then
$$S(\Delta) = \frac{1}{r}\log\frac{s_x+(s_x^2-4p_0p_2)^{\frac{1}{2}}}{s_z+(s_z^2-4p_0p_2)^{\frac{1}{2}}} + \Delta
$$
which again approaches $\lim_{\Delta\to\frac{1}{r}}\frac{1}{r}\log\frac{2s_x}{2s_z} + \Delta =
\frac{1}{r}(\log(r\tphi(x)) + 1)$. It remains to show the limit as $\Delta$ goes to $0$ is less than this value, since we cannot have an interior maximum in the case $|\tphi(x)| \geq \frac{1}{r}$.

The limiting value $S(0)$ when $\Delta$ go to $0$ of \cref{eq:SDelta} is well-defined. We show it is also equals to $S_1$. To see this, as $-\bu$ is increasing, the supremum $S_1$ is attained at a point on the hypersurface $V$ defined by $F(\grad_F^{-1}(\barx)) = (\frac{2r(-p_0p_2)^{\frac{1}{2}}}{p})^{\frac{p}{p-1}}$. We verify the optimal point is $\barx_* := 2r(-p_0p_2)^{\frac{1}{2}}\grad_{\tphi}(x)$. First, it is in $V$ thanks to the identity $\grad_F^{-1}(p\grad_{\tphi}(x)) = F(x)^{-\frac{1}{p}}x$ earlier. Set
$$\hat{x} = \grad_F^{-1}(\barx_*) = \grad_F^{-1}(\frac{2r(-p_0p_2)^{\frac{1}{2}}}{p}p\grad_{\tphi}(x)   ) = (\frac{2r(-p_0p_2)^{\frac{1}{2}}}{p})^{\frac{1}{p-1}}F(x)^{-\frac{1}{p}}x.$$
For $\bar{z}$ in $V$, set $\hat{z} = \grad_F^{-1}(\bar{z})$, thus, $\barx_* = \grad_F(\hat{x})$ and $\bar{z} = \grad_F(\hat{z})$. We have $F(\hat{x}) - F(\hat{z}) \geq \grad_F(\hat{z})^{\sfT}(\hat{x} - \hat{z})$, which implies $\grad_F(\hat{z})^{\sfT}\hat{x} \leq \grad_F(\hat{z})^{\sfT} \hat{z}$. Therefore
$$\bar{z}^{\sfT}\left((\frac{2r(-p_0p_2)^{\frac{1}{2}}}{p})^{\frac{1}{p-1}}F(x)^{-\frac{1}{p}}x\right)\leq
\barx_*^{\sfT}\left((\frac{2r(-p_0p_2)^{\frac{1}{2}}}{p})^{\frac{1}{p-1}}F(x)^{-\frac{1}{p}}x\right)
$$
since $\grad_F(\hat{z})^{\sfT} \hat{z} =\frac{1}{p}F(\hat{z})=\frac{1}{p}F(\hat{x}) = \grad_F(\hat{x})^{\sfT} \hat{x}$. After simplification, this confirms $x^{\sfT}\bar{z}$ is maximized on $V$ at $\bar{z} = \barx_*$. Thus $S_1=S(0)$. Finally,
$$rS(0) = \log(r\tphi(x) + (r^2\tphi(x)^2+1)^{\frac{1}{2}})< \log(r\tphi(x)) + 1.$$
This is because $1+(t^2+1)^{\frac{1}{2}}$ is increasing for $t=(r\tphi(x))^{-1}\leq 1$ and $\log(1+2^{\frac{1}{2}}) < 1$. This confirms $\max\{S_1, S_2\}$ is $\frac{1}{r}(\log(r\tphi(x)) + 1)$.
\end{proof}    
\end{appendices}

%%===========================================================================================%%
%% If you are submitting to one of the Nature Portfolio journals, using the eJP submission   %%
%% system, please include the references within the manuscript file itself. You may do this  %%
%% by copying the reference list from your .bbl file, paste it into the main manuscript .tex %%
%% file, and delete the associated \verb+\bibliography+ commands.                            %%
%%===========================================================================================%%

\bibliographystyle{amsplain}
\bibliography{zeroCrossCurv}% common bib file
%% if required, the content of .bbl file can be included here once bbl is generated
%%\input sn-article.bbl

\end{document}